\pgfplotsset{compat=1.15}
\newtheorem{theorem}{Theorem}[section]
\newtheorem{definition}[theorem]{Definition}
\newtheorem{lemma}[theorem]{Lemma}
\newtheorem{proposition}[theorem]{Proposition}
\newtheorem{corollary}[theorem]{Corollary}
\newtheorem{problem}[theorem]{Problem}
\theoremstyle{remark}
\DeclareMathOperator{\Int}{Int}
\DeclareMathOperator{\conv}{conv}
\newcommand\R{\mathbb{R}}
\newcommand{\St}{{\mathcal St}}
\newcommand{\Err}{{\mathcal Err}}
        \newcommand{\eps}{\varepsilon}
        \newcommand{\HH}{{\mathcal H}}
        \renewcommand{\H}{\HH^1}
        \newcommand{\defeq}{:=}
        \newcommand{\forget}[1]{}
        \def\dist{\mathrm{dist}\,}
        \def\turn{\mathrm{turn}\,}
        \def\Int{\mathrm{Int}\,}
\title{Maximal distance minimizers for a rectangle}
\author{D.D. Cherkashin, A.S. Gordeev, G.A. Strukov, Y.I. Teplitskaya}
\begin{document}

\maketitle

\begin{abstract}
\emph{A maximal distance minimizer} for a given compact set $M \subset \mathbb{R}^2$ and some given $r > 0$ is a set having the minimal length (one-dimensional Hausdorff measure) over the class	of closed connected sets $\Sigma \subset \mathbb{R}^2$ satisfying the inequality
\[
\max_{y\in M} \dist (y, \Sigma) \leq r.
\]
This paper deals with the set of maximal distance minimizers for a rectangle $M$ and small enough $r$.

\end{abstract}

\section{Introduction}
Let $M$ be a compact planar set. For every compact set $\Sigma \subset \mathbb{R}^2$ define the maximal distance functional
\[
F_M(\Sigma) := \max_{y\in M} \dist (y, \Sigma),
\]
further the value $F_M(\Sigma)$ is called \textit{energy} of $\Sigma$.

The following problem appeared in~\cite{buttazzo2002optimal} and later has been studied in~\cite{miranda2006one,paolini2004qualitative}.
\begin{problem}
For a given compact set $M \subset \R^2$ and $r > 0$ find a connected set $\Sigma$ of minimal length (one-dimensional Hausdorff measure $\H$) such that
\[
F_M(\Sigma) \leq r.
\]
\label{TheProblem}
\end{problem}

We use the following general results (see~\cite{paolini2004qualitative}).
\begin{itemize}
    \item A solution of Problem~\ref{TheProblem} exists and has a finite length. We further call it a \textit{minimizer}.
    \item A minimizer has no cycles (homeomorphic images of $S^1$).
    \item The set of minimizers coincides with the set of solutions of the dual problem: minimize $F_M$ over all compact connected sets $\Sigma$ with
prescribed bound on the total length $\H(\Sigma) \leq l$. This item explains the name of the problem.
\end{itemize}

The result of our paper is the following
\begin{theorem}
Let $M = A_1A_2A_3A_4$ be a rectangle, $0 < r < r_0(M)$. Then a maximal distance minimizer has the following topology with 21 segment, depicted in the left part of Fig.~\ref{rectangle}. The middle part of the picture contains enlarged fragment of the minimizer near $A_1$; the labeled angles are equal to $\frac{2\pi}{3}$. The rightmost part contains much more enlarged fragment of minimizer near $A_1$.

All maximal distance minimizers have length approximately $Per - 8.473981r$, where $Per$ is the perimeter of the rectangle.
\label{rectangleT}
\end{theorem}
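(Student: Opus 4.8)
My plan is to combine local structural results for maximal distance minimizers with a global combinatorial case analysis, followed by an explicit Euler–Lagrange / variational computation to pin down the length constant. First I would recall the known local regularity of minimizers $\Sigma$ for $F_M(\Sigma)\le r$: away from $M$ the set $\Sigma$ is a finite union of $C^{1,1}$ arcs meeting only at triple points with $120\degre$ angles (Steiner-type condition), every point of $\Sigma$ is within distance $r$ of $M$, and the "contact set" (points of $M$ at distance exactly $r$) together with the curvature constraint forces $\Sigma$ to consist of straight segments and circular arcs of radius $r$. Near a convex corner $A_i$ of the rectangle the set $M$ locally looks like a quarter-plane wedge, so I would invoke (or reprove, via a blow-up argument as $r\to 0$) the description of the minimizer near an isolated corner: it is a small "tree" with a finite number of branches and $120\degre$ triple points, the same configuration at each of the four corners by symmetry. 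This is where the claimed enlarged fragment near $A_1$ comes from; the $\frac{2\pi}{3}$ labels are exactly the Steiner angles.

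Next I would set up the global topology. Since a minimizer has no cycles and finite length, $\Sigma$ is a tree; its leaves (degree-one points) must be "pulled taut" against $M$, so each leaf lies on the part of $M$ it is responsible for covering, and the energy constraint means the $r$-neighborhood of $\Sigma$ covers $M$. For a rectangle with small $r$, the covering forces $\Sigma$ to run roughly parallel to each of the four sides at distance about $r$, with the four corner-trees joining these four "rails." I would then enumerate the finitely many combinatorially distinct trees consistent with (i) the $120\degre$ condition, (ii) segments/arcs of radius $r$, (iii) covering $M$ within $r$, and (iv) the corner structure from the blow-up; counting edges in the surviving configuration gives the stated $21$ segments. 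Ruling out competitors is done by comparing lengths: any configuration with a different corner tree or a different rail structure is strictly longer, which one checks by the first-variation (balancing) conditions plus a direct length estimate. I expect this combinatorial elimination — showing that no other tree topology can be shorter, uniformly for all small $r$ — to be the main obstacle, since it requires both the local classification near corners and a careful global argument that the four sides are covered in the "obvious" way and not, say, by a single long zig-zag.

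Finally, with the topology fixed, I would compute the length. Parametrize the minimizer by the positions of its triple points and the arc parameters near the corners; the side-rail segments contribute $Per$ minus a correction coming from how far the rails are "retracted" near each corner, and each corner tree contributes a fixed amount depending only on $r$. By the scaling $\Sigma_r = r\,\Sigma_1$ of the corner configuration (the wedge problem is scale-invariant), the total length has the exact form $Per - c\,r$ for a universal constant $c$ determined by the optimal corner tree. I would then either solve the resulting low-dimensional optimization (minimize the corner-tree length plus the retraction contribution over the finitely many free parameters, using the $120\degre$ conditions to reduce to a single transcendental equation) exactly, obtaining $c = 8.473981\ldots$ as a closed-form expression in radicals and inverse trig functions evaluated numerically. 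A brief check that the second variation is positive (or that the configuration is the unique critical point among trees of this topology) completes the argument that this value of $c$ is attained by every minimizer, giving the claimed asymptotic length $Per - 8.473981\,r$.
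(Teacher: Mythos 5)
Your high-level plan (localize the problem to the four corners, classify the corner trees, then optimize) matches the spirit of the paper, but the proposal leaves the genuinely hard steps unaddressed, and the methods you name for them would not succeed. First, the localization is not a routine consequence of local regularity: the paper must prove that $\Sigma$ avoids an inner region $P_r$ (via a ``no branching points in $K_r$'' argument), that $\Sigma$ meets a small heptagon around each corner in a \emph{connected} piece (Lemma~\ref{hujugl}, a long case analysis on energetic points of order $1$ and $2$ and their corresponding points $y(x)$), and then closes $\Sigma$ into a cycle $\mathcal{C}$ by adding a segment of length $2r-o(r)$, so that near each corner the cycle is a chain $Z_1W_1VW_2Z_2$ with exactly one Steiner point (Lemma~\ref{lm:CycleInAngle}). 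None of this follows from a blow-up at the corner; note also that a leaf of $\Sigma$ does not lie on $M$ but at distance exactly $r$ from an uncovered point of $M$, and it is the interaction between the two sides meeting at $A_1$ that makes the corner analysis delicate.

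Second, and more seriously, your ``combinatorial elimination by comparing lengths'' cannot be carried out by a direct estimate: the optimal corner tree (the paper's Case 3a) beats the most natural competitor $p_0$ by less than $10^{-5}r$, and the margin separating the surviving topology from the others is only about $4\cdot 10^{-6}r$. To certify differences of that size the authors run an interval-arithmetic computer search over a six-dimensional parameter space $(x,y,\alpha,\xi_1,\xi_2,\xi)$, recursively subdividing boxes and bounding the variation of the length functional on each box, and then eliminate the remaining Steiner topologies on at most four terminals by hand using sixth-order Taylor bounds with certified remainders. A first-variation (balancing) condition identifies critical configurations but gives no way to decide, at this precision, which one is the global minimum. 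Two smaller inaccuracies: minimizers here contain no circular arcs of radius $r$ (for a rectangle, $M_r$ is again a polygon and inside $N_r$ the minimizer is a Steiner tree made of line segments), and the length is only established in the form $Per - cr + o(r)$ with $c$ determined numerically, not as an exact closed form $Per-cr$ with $c$ in radicals.
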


In fact, every maximal distance minimizer is very close (in the sense of Hausdorff distance) to the one depicted in the picture.

\begin{figure}[H]
    \centering
    \hfill
    \begin{tikzpicture}[scale=0.4]
        
        \def\w{8}
        \def\h{4.5}
        
        \def\t{.85}
        \def\c{}
        
        \foreach\x in {-1,1} {
            \foreach\y in {-1,1} {
                \draw[blue, thick]  ({\x*(-\w+1 - .292893218813455)},
                        {\y*(-\h+1 - .2928932188134548}) -- 
                       ({\x*(-\w+2 - \t)},
                        {\y*(-\h+2 - \t)});
                \draw[blue, thick]  ({\x*(-\w+2 - \t)},
                        {\y*(-\h+2 - \t)}) -- 
                       ({\x*(-\w+1)}, 
                        {\y*(-\h+2 -.5857864376269108)});
                \draw[blue, thick]  ({\x*(-\w+2 - \t)},
                        {\y*(-\h+2 - \t)}) -- 
                       ({\x*(-\w+2 - .5857864376269095)},
                        {\y*(-\h+1)});
            }
        }
        \draw[blue, thick]  (-\w+2 - .5857864376269095,-\h+1) -- 
               ( 0,-\h);
        \draw[blue, thick]  ( 0,-\h) -- 
               ( \w-2 + .5857864376269095,-\h+1);
        \draw[blue, thick]  (-\w+1, -\h+2 - .5857864376269108) -- 
               (-\w+1,  \h-2 + .5857864376269108);
        \draw[blue, thick]  (-\w+2 - .5857864376269095, \h-1) -- 
               ( \w-2 + .5857864376269095, \h-1);
        \draw[blue, thick]  ( \w-1, -\h+2 - .5857864376269108) -- 
               ( \w-1,  \h-2 + .5857864376269108);
               
        \fill[white] (1, -\h) arc (0:180:1);
        
        \draw [very thick] (-\w,-\h) rectangle (\w,\h);
        
        \draw (-\w, -\h) node[below left]{$A_1$};
        \draw (-\w, \h) node[above left]{$A_2$};
        \draw (\w, \h) node[above right]{$A_3$};
        \draw (\w, -\h) node[below right]{$A_4$};
        
    \end{tikzpicture}
\hfill
    \begin{tikzpicture}[scale=2]
        
        \coordinate (V) at (1.10837, 1.10837);
        \coordinate (Q) at (.7071, .7071);
        \coordinate (C) at (.73, .73);
        
        \draw[dashed] (0,0) -- (Q);
        \draw[thick, dotted] (0, 1) arc (90:0:1);
        \draw[thick, dotted] (.414, 0) arc (180:55:1);
        \draw[dashed] (1.414, 0) -- (1.544739754593147, 0.9914448613738104) node[pos=0.5, left]{$r$};
        \draw[thick, dotted] (0, .414) arc (-90:35:1);
        \draw[dashed] (0, 1.414) -- (0.9914448613738104,1.544739754593147) node[pos=0.5,below]{$r$};
        
        \draw [very thick] (2,0) -- (0,0) -- (0,2);
        \draw (0, 0) node[below left]{$A_1$};
        \draw [very thick, blue] (0.9914448613738104,2)-- (0.9914448613738104,1.544739754593147);
        \draw [very thick, blue] (V)-- (0.9914448613738104,1.544739754593147);
        \draw [very thick, blue] (V)-- (1.5447397545931463,0.9914448613738106);
        \draw [very thick, blue] (1.5447397545931463,0.9914448613738106)-- (2,0.9914448613738102);
        \draw [very thick, blue] (Q)-- (V);    

        \def\rr{0.12}
        \draw[shift={(V)}] 
            (0.966*\rr ,-0.2588*\rr) arc(-15:105:\rr);
        \def\rr{0.09}
        \draw[shift={(V)}] 
            (0.966*\rr ,-0.2588*\rr) arc(-15:105:\rr);
        \def\rr{0.096}
        \draw[shift={(V)}] 
            (0.966*\rr ,-0.2588*\rr) arc(-15:-135:\rr);
        \def\rr{0.072}
        \draw[shift={(V)}] 
            (0.966*\rr ,-0.2588*\rr) arc(-15:-135:\rr);
        \def\rr{0.112}
        \draw[shift={(V)}] 
            (-0.2588*\rr ,0.966*\rr) arc(105:225:\rr);
        \def\rr{0.082}
        \draw[shift={(V)}] 
            (-0.2588*\rr ,0.966*\rr) arc(105:225:\rr);
        
        \node[right] at (0.9914448613738104,1.544739754593147) {\small $\approx \frac{11\pi}{12}$};
        \node[above] at (1.644739754593147, 0.9914448613738104) {\small $\approx \frac{11\pi}{12}$};
        
        \def\ll{0.051}
        \def\llll{0.2}
        \draw[line width = .7pt] (C) circle (\ll);
        \draw[shift={(C)}, line width = 2pt] (\ll * .866, \ll * -.5) -- (\llll * 0.866, \llll * -.5);
    \end{tikzpicture}
    \hfill
    \begin{tikzpicture}[scale=48]
        \coordinate (V) at (.7645532062, .76593);
        \coordinate (Q2) at (.723714, .725155);
        \coordinate (Q1) at (.707224, .706989);
        \coordinate (C) at (.73, .73);

        \draw[very thick, blue] (V) -- (Q2) -- (Q1);
        \fill[blue] (Q2) circle (0.0015);
        
        \draw[shift={(Q2)}, rotate=45] (0.0075, 0) arc (0 : -177 : 0.0075) node[right, pos=0.3]{$\approx 0.98\, \pi$};
        
        \fill[black, shift={(C)}, rotate=60] (-0.01, -0.048) -- (-0.013, -0.0715) -- (0.01, -0.06) -- (0.01, -0.048) -- cycle;
        \draw[black, line width=3] (C) circle (0.05);
        
        \fill[black, shift={(C)}, rotate=60] (0.0171, -0.046) arc (-70 : -110 : 0.05) -- (-0.0171, -0.049) arc (-120 : -60 : 0.0342) -- cycle;
    \end{tikzpicture}
    \hfill\ 
\caption{The minimizer for rectangle $M$ and $r < r_0(M)$.}

    \label{rectangle}
\end{figure}
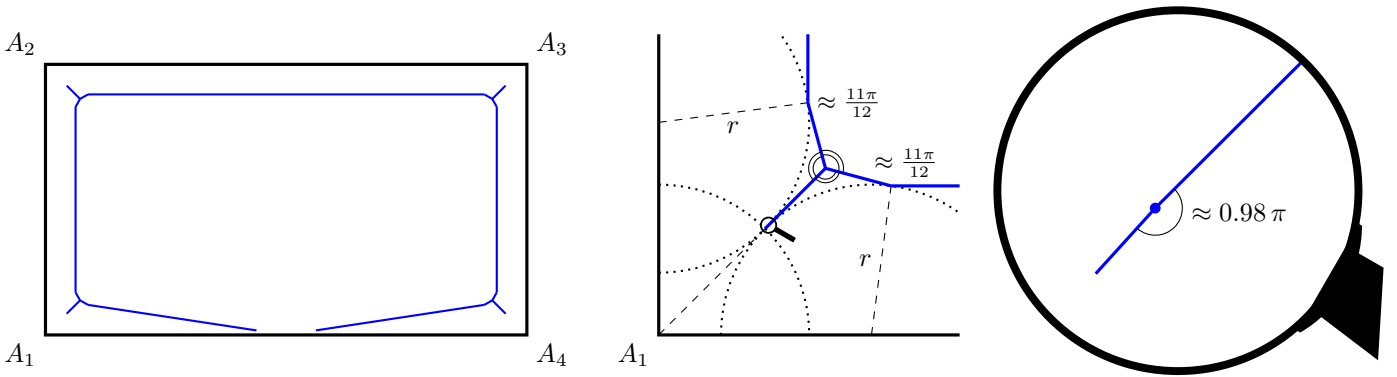

\paragraph{Structure of the paper.} Subsections~\ref{sect:energeticpoints}--\ref{sect:Steiner} contain notation and very basic facts on Problem~\ref{TheProblem} and Steiner problem. The rest of the paper is devoted to the proof of Theorem~\ref{rectangleT}.

\paragraph{Outline of the proof.} In Section~\ref{sect:adapting} we adapt several lemmas from~\cite{cherkashin2018horseshoe} to $M$ being a polygon. It turns out that $\Sigma$ lies in the $const \cdot r$-neighborhood of $M$ (see Corollary~\ref{CorKar}). 

Section~\ref{diff} outlines auxiliary results from~\cite{cherkashin2020minimizers} concerning differentiation in the picture. We derive the conditions on optimality of 
$\Sigma$ under motion of proper points $y(x)$ for energetic $x$.

Section~\ref{sect:nearangles} shows that the main difficulties appear in a neighborhood of the angles of $M$. Lemma~\ref{hujugl} shows that the intersection of $\Sigma$ with the $const \cdot r$-neighborhood of $A_i$ is connected. Then we enclose $\Sigma$ by adding a segment of length at most $2r$. The resulting set $\Sigma'$ has a large cycle $\mathcal{C}$. Subsection~\ref{Sect:studyofC} shows that $\mathcal{C}$ has prescribed combinatorial structure.

At this point we know that $\mathcal{C}$ covers all $M$ except $const\cdot r$-neighborhoods of angles. In Section~\ref{Application} we describe the computer search we use to find an optimal way to cover those neighborhoods. It is enough to consider a union of $\mathcal{C}$ with a Steiner tree on at most 4 special points.
Subsection~\ref{ErrorEstimation} deals with error terms. As a result of the search we conclude that every minimizer lies in a small box in configurational space around a certain point $p_0$.

In Section~\ref{sect:thefinal} we consider all Steiner topologies on at most 4 points one by one. It turns out that all the topologies except the answer provide longer configurations than $p_0$; the answer provides a shorter one.    

Finally, in Section~\ref{discussion} we discuss the uniqueness of a solution for a rectangle and refer to some related open questions.

\subsection{Energetic points}
\label{sect:energeticpoints}

Let $B_\rho(x)$ be the open ball of radius $\rho$ centered at a point $x$, and let $B_\rho(T)$
be the open $\rho$-neighborhood of a set $T$ i.e.
\[
B_\rho(T) := \bigcup_{x \in T} B_\rho(x).
\]
Note that the condition $F_M(\Sigma) \leq r$ in Problem~\ref{TheProblem} is equivalent to $M \subset \overline{B_r(\Sigma)}$.

The following definition plays central role in the study of minimizers, in particular in the proof of Theorem~\ref{rectangleT}.
\begin{definition}
Let $\Sigma$ be a minimizer for $M$ and $r$.
A point $x \in \Sigma$ is called \textit{energetic}, if for all $\rho > 0$ one has 
\[
F_M(\Sigma \setminus B_\rho(x)) > r.
\]
\end{definition}

The following properties of energetic points have been proved in~\cite{miranda2006one} and~\cite{paolini2004qualitative}.
\begin{itemize}
    \item[(a)] For every energetic point $x$ there is a point $y \in M$, such that $\dist(x,y) = r$ and $B_{r}(y)\cap \Sigma=\emptyset$. 
    We say that point $y$ is a \textit{corresponding point} to $x$ and denote it $y(x)$. Note that such $y$ may be not unique.
    \item[(b)] For every non-energetic point $x$ there is an $\varepsilon > 0$, such that $\Sigma \cap B_{\varepsilon}(x)$ is either a line segment or a \textit{regular tripod}, i.e. the union of three line segments with an endpoint in $x$ and relative angles of $2\pi/3$. If a point
$x$ is a center of a regular tripod, then it is called a \textit{Steiner point} (or a \textit{branching point}) of $\Sigma$.
    \label{Sbasis}
\end{itemize}

\begin{theorem}[Teplitskaya,~\cite{teplitskaya2018regularity,teplitskaya2019regularity}]
Let $\Sigma$ be a maximal distance minimizer for a compact set $M \subset \mathbb{R}^2$, $r > 0$. We say that the ray $ (ax] $ is a \textit{tangent ray} of the set $ \Sigma $ at the point $ x\in \Sigma $ if there exists a non  stabilized sequence of points $ x_k \in \Sigma $ such that $ x_k \rightarrow x $ and $ \angle x_kxa \rightarrow 0 $. Then \begin{itemize}
\item[(i)] $\Sigma$ is a union of a finite number of injective images of the segment $[0,1]$;
\item[(ii)] the angle between each pair of tangent rays at every point of $\Sigma$ is greater or equal to $2\pi/3$;
\item[(iii)] the number of tangent rays at every point of $\Sigma$ is not greater than $3$. If it is equal to $3$, then there exists such a neighbourhood of $x$ that the arcs in it coincide with line segments and the pairwise angles between them are equal to $2\pi/3$. 
\end{itemize}
\end{theorem}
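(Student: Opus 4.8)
The plan is to localize all three assertions to a neighborhood of an arbitrary point $x\in\Sigma$ and to treat non-energetic and energetic points separately. For a non-energetic $x$, property (b) already gives that $\Sigma$ is locally a segment or a regular tripod, so (ii) and (iii) hold there with exactly two or three tangent rays meeting at $2\pi/3$. Hence the entire difficulty concentrates at energetic points, where the covering constraint $M\subset\overline{B_r(\Sigma)}$ is active through the corresponding point $y(x)$ and its empty ball $B_r(y(x))$. The engine of the proof is a variational comparison principle: any competitor obtained by a local surgery on $\Sigma$ that remains closed, connected, and still $r$-covers $M$ cannot have smaller $\H$, since $\Sigma$ is a minimizer.

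I would prove (ii) by a Steiner-shortening competitor. Suppose two tangent rays at $x$ span an angle $\theta<2\pi/3$. Pick points $p,q\in\Sigma$ on the two corresponding arcs at small distance $\delta$ from $x$; the subarc of $\Sigma$ joining $p$ to $q$ through $x$ has length at least $\dist(p,x)+\dist(x,q)$, whereas the Steiner (Fermat) tree on $\{p,x,q\}$ is strictly shorter precisely because $\angle pxq<2\pi/3$, the saving being of order $\delta$. Replacing the subarc by this tree keeps $x\in\Sigma$ (hence preserves the cover of $y(x)$) and keeps $\Sigma$ connected, and one must check that the only points of $M$ that can lose their $r$-cover lie within $O(\delta)$ of $x$ and are recovered up to a correction of higher order in $\delta$; for $\delta$ small the modified set is then admissible and strictly shorter, a contradiction. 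For (iii), four or more tangent rays would contain a pair at angle $<2\pi/3$, since four rays cannot be pairwise $\ge 2\pi/3$ in the plane ($4\cdot 2\pi/3>2\pi$), contradicting (ii); and if there are exactly three rays, the facts that the three pairwise angles are each $\ge2\pi/3$ and sum to at most $2\pi$ force them all to equal $2\pi/3$, after which the same Fermat-point minimality forces the three arcs to be straight segments near $x$, i.e. a regular tripod.

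Granting (ii) and (iii), every point of $\Sigma$ carries one, two, or three tangent rays, so $\Sigma$ is locally an arc away from the set $\mathcal B$ of branch points (degree three) and endpoints (degree one). Since $\Sigma$ is connected, compact, of finite length, and (by the cited general results) contains no cycle, it is a topological tree, and it remains only to bound $|\mathcal B|$. I would argue by compactness: if $\mathcal B$ were infinite it would accumulate at some $x_0\in\Sigma$, and arcs issuing from infinitely many nearby branch points would either produce more than three distinct tangent rays at $x_0$ or a pair at angle $<2\pi/3$, contradicting (ii)--(iii). To exclude accumulation quantitatively one uses an Ahlfors-type lower density bound $\H(\Sigma\cap B_\rho(z))\ge c\rho$, which in a tree forces the points of $\mathcal B$ to be $c'$-separated and hence finite in number because $\H(\Sigma)<\infty$. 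With $\mathcal B$ finite, $\Sigma$ splits into finitely many arcs joining consecutive points of $\mathcal B$, each an injective image of $[0,1]$, which is (i).

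The main obstacle is the honest bookkeeping of the covering constraint under surgery: unlike the free Steiner problem, one must guarantee $F_M\le r$ after each local modification, i.e. that deleting the short subarc near an energetic point does not uncover any point of $M$ beyond what the competitor recovers. This pits the length saving $\Theta(\delta)$ against the coverage correction, and one needs the latter to be genuinely of higher order; controlling it relies on the empty ball $B_r(y(x))$ and on the fact that near $x$ the set $\Sigma$ lies in the closed half-plane opposite to $y(x)$, so that the freed and newly covered regions of $M$ differ only in a thin shell of width $O(\delta)$. A secondary technical point arises if one prefers the geometric-measure-theoretic route to (ii)--(iii): it requires an almost-monotonicity formula adapted to the constrained functional to guarantee existence and minimality of blow-up (tangent) cones, together with the classification of length-minimizing cones in a half-plane, which again singles out exactly the single ray, the line, and the $2\pi/3$ tripod.
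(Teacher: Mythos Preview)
The paper does not contain a proof of this theorem: it is stated with attribution to Teplitskaya and citations \cite{teplitskaya2018regularity,teplitskaya2019regularity}, and the text immediately moves on to the Notation subsection. There is therefore no in-paper argument to compare your proposal against; the result is imported wholesale.

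On the merits of your sketch itself: the overall architecture (handle non-energetic points via property~(b), prove (ii) by a Steiner-shortening competitor, deduce (iii) by pigeonhole on angles summing to $2\pi$, then get (i) from finiteness of the branch/endpoint set) is the standard line and is the right plan. Two of the steps, however, are not yet closed. For (ii) at an energetic $x$, you correctly identify the issue but do not resolve it: you assert the coverage correction is higher order in $\delta$, yet you have no a~priori control on how far the deleted subarcs deviate from their tangent rays, so the claim that the uncovered set lies in a shell of width $O(\delta)$ and can be recaptured by a stub of length $o(\delta)$ is unproved. One needs first a one-sided tangency statement (the arc lies in the closed half-plane bounded by the chord through $x$ and $y(x)$, or a similar barrier coming from $B_r(y(x))\cap\Sigma=\emptyset$) before the competitor comparison goes through. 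For (i), the claim that an Ahlfors lower density forces branch points to be $c'$-separated is false as stated; lower density bounds do not by themselves prevent accumulation of degree-three vertices. Your alternative route---accumulation would create more than three tangent rays at the limit---is the right idea, but it requires the local tripod structure of (iii) to hold in a \emph{uniform} (not merely pointwise) neighborhood of each branch point, which is additional work. These are exactly the places where the cited papers invest effort.
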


\subsection{Notation}

For a given set $X \subset \mathbb{R}^2$ we denote by $\overline X$ its closure, by $\Int(X)$ its interior and by $\partial X$ its topological
boundary.

For given points $B$, $C \in \mathbb{R}^2$ we use the notation $[BC]$, $[BC)$ and $(BC)$ for the corresponding closed line
segment, ray and line respectively. We denote by $]BC]$ and $]BC[$ the corresponding semiopen and
open segments, and by $|BC|$ the length of these segments.

For rays $[BC)$, $[CD)$ let $\angle \left([BC), [CD)\right)$ stand for the directed angle from $[BC)$ to $[CD)$ with respect
to the clockwise orientation. By definition $\angle \left([BC), [CD)\right) \in [0,2\pi)$.

For a polygonal chain $B_1,\dots, B_n$ define its \textit{turning} as follows
\[
\turn B_1,\dots, B_n := \sum_{i=1}^{n-2} \angle \left([B_iB_{i+1}),[B_{i+1},B_{i+2})\right).
\]
Note that for a closed (i.e. $B_1 = B_n$) and non-self-intersecting chain its turning is equal to $\pm 2\pi$.

For a $\sigma \subset \Sigma$ define the part of $M$, which is covered by $\sigma$:
\[
\gamma (\sigma) := M \cap \overline{B_r(\sigma)}.
\]
If $y\in M$ ($Y \subset M$) is contained in $\gamma (\sigma)$ (is a subset of $\gamma (\sigma)$) we say that $\sigma$ \textit{covers} $y$ ($Y$).

We use denotation $o_x(1)$ for a quantity, which tends to zero with $x$. Note that $o(x) = x o_x(1)$.

\subsection{Steiner problem}
\label{sect:Steiner}

The Steiner problem has several different but more or less equivalent formulations; we need the following one.
Let $S$ be a finite planar set of points, the problem is to find a connected set $\St$ of minimal length such that $S \subset \St$. Vertices from $S$ are called \textit{terminals}.
It is well-known that a solution exists, and may be not unique. 

It is also known that $\St$ consists of line segments and the angle between two segments adjacent to the same vertex is greater or equal than $2\pi/3$.
Let us call a \textit{Steiner} (or \textit{branching}) point such a point of $\Sigma$ that does not belong to $S$ and which is
not an interior point of a segment of $\Sigma$. 
It turns out that a Steiner point $x$ is adjacent to exactly 3 line segments. Hence the angle between any pair of segments adjacent to $x$ is equal to $2\pi/3$.

In particular we need to consider this problem for $S = \{x_1,x_2,x_3\}$.
It is well-known that for such $S$ the Steiner problem has a unique solution. If $x_1,x_2,x_3$ form a triangle with all angles smaller than $2\pi/3$, then
\[
\St (x_1,x_2,x_3) = [x_1T] \cup [x_2T] \cup [x_3T],
\]
where $T$ is the point satisfies $\angle x_1Tx_2 = \angle x_1Tx_3 = \angle x_2Tx_3 = 2\pi/3$ ($T$ is usually called Fermat point or Torricelli point).
Consider outward equilateral triangle $x_1x_2X$ (i.e. $X$ and $x_3$ lie on different sides of $(x_1x_2)$). It is known that $X$, $T$ and $x_3$ belong to the same line.

In the other case there is an angle of size at least $2\pi/3$, say, $\angle x_1x_2x_3$. The only solution is
\[
\St (x_1,x_2,x_3) = [x_1x_2] \cup [x_2x_3].
\]

For a more detailed survey see book~\cite{hwang1992steiner} and paper~\cite{gilbert1968steiner}.
Further we use Melzak algorithm~\cite{melzak1961problem} to find Steiner trees on at most four terminals.

\section{Adapting of the horseshoe method}
\label{sect:adapting}

\begin{definition}
    Let $M$ be a closed convex curve. By $N$ we denote $\conv M$. By $M_r$ we denote $N \cap \partial B_r(M)$. Finally, $N_r$ is $\conv M_r$ (see Fig.~\ref{MMrNNr}).
\end{definition}

\begin{figure}[h]
    \centering
    \begin{tikzpicture}[scale=0.9]
    \fill[lightgray!50] plot [smooth cycle, tension=1] coordinates 
        {(0,0) (0,3) (4,3) (4.5, 0)};
    \draw[very thick] plot [smooth cycle, tension=1] coordinates 
        {(0,0) (0,3) (4,3) (4.5, 0)};
    \draw[very thick, pattern=dots] plot [smooth cycle, tension=1] coordinates 
        {(0.7,0.7) (0.7,2.3) (3.3,2.3) (3.8, 0.7)};
    \draw[<->] (2.2,-0.5) -- (2.2,0.3);
        
    \node[above] at (2, 3.6) {$M$};
    \node[above] at (2, 2.6) {$M_r$};
    \node[right] at (2.2, -0.1) {$r$};
    
    \draw[fill = lightgray!50] (6,1) rectangle (6.5,1.5);
    \draw[pattern=dots] (6,0) rectangle (6.5,0.5);
    \node[right] at (6.5, 1.25) {$N$};
    \node[right] at (6.5, 0.25) {$N_r$};
\end{tikzpicture}

    \caption{Definitions of $N$, $M_r$, $N$, and $N_r$}
    \label{MMrNNr}
\end{figure}
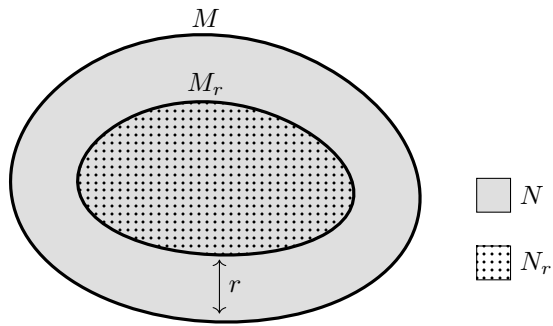

Finding the set of solutions for a given $M$ is a rather difficult problem. The only known nontrivial result is the following.

\begin{theorem}[Cherkashin -- Teplitskaya, 2018~\cite{cherkashin2018horseshoe}]
Suppose that $r > 0$, and $M$ is a convex closed curve with the radius of curvature at least $5r$. Let $\Sigma$ be an arbitrary minimizer for $M$. Then $\Sigma$ 
is the union of an arc of $M_r$ and two tangent segments to $M_r$ at the ends of the arc (i.e. it is a horseshoe, see Fig.~\ref{horseshoe}). 
In the case of $M$ being a circumference with radius $R$, one can weaken the assumption to $R > 4.98r$.
\label{horseshoeT}
\end{theorem}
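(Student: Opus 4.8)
The plan is to confine the minimizer to the convex region $N=\conv M$, then show that it is topologically an arc, and finally use the variational structure of energetic points to identify it as a horseshoe; the curvature hypothesis enters only in the last, quantitative, step. For the localization, let $\pi$ be the nearest-point projection onto the closed convex set $N$, which is $1$-Lipschitz. If $\Sigma$ is admissible then so is $\pi(\Sigma)$: it is compact and connected, and for $y\in M\subset N$ and $x\in\Sigma$ with $|xy|\le r$ we have $|\pi(x)-y|=|\pi(x)-\pi(y)|\le|xy|\le r$, so $\pi(\Sigma)$ still covers $M$. Since $\pi$ does not increase length and strictly shortens any portion of $\Sigma$ lying outside $N$, minimality forces $\Sigma\subset N$.

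I would next fix the topology. A minimizer has no cycles, so $\Sigma$ is a tree with finitely many branch points (by the regularity theorem). Every endpoint of $\Sigma$ is energetic — otherwise the dangling arc at a non-energetic endpoint could be trimmed, keeping $\Sigma$ admissible and connected but strictly shorter — and at an endpoint $x$ the forbidden ball $B_r(y(x))$ is disjoint from $\Sigma$ although $\Sigma$ must reach within $r$ of $M$ on both sides of $y(x)$. Were there a branch point, each of the three subtrees hanging from it would have to cover a sub-arc of $M$ exclusively (otherwise a whole subtree could be deleted, shortening $\Sigma$), and a short analysis of how these exclusive arcs and the empty balls at the leaves sit around the convex curve $M$ produces a strictly shorter connected competitor by rerouting. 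Hence $\Sigma$ has exactly two endpoints and no branch point, i.e. it is a simple arc.

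Parametrizing $\Sigma$ from one endpoint $a$ to the other, properties (a)--(b) together with the regularity theorem show that, off its endpoints, $\Sigma$ alternates between straight segments and curved energetic pieces; since a curved energetic point $x$ lies on $\partial B_r(y(x))$ with $\Sigma$ tangent to that ball from outside, such a piece must run along $M_r$ and its one-sided tangents must be tangent to $M_r$. Optimality — no local shortcut, and relative angles at least $2\pi/3$ at every junction, exactly as for Steiner configurations — then forces the interior of $\Sigma$ to be a single connected arc $\sigma$ of $M_r$ flanked by two straight segments tangent to $M_r$ at the endpoints of $\sigma$, with $a$ and $b$ placed so that the caps of $M$ they cover abut the arc of $M$ covered by $\sigma$. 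This is exactly a horseshoe. The curvature hypothesis (radius of curvature at least $5r$) is what guarantees that the tangent segments of the length actually required stay inside $N$, so that the localization step is respected, and that no competitor assembled from several shorter chords beats this one; in the rotationally symmetric case the comparison reduces to a one-parameter calculus problem, which yields the sharper bound $R>4.98r$.

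The delicate point — and the main obstacle — is the rigidity in the last step: showing that the interior of $\Sigma$ is an \emph{exact} arc of $M_r$ and that the two ends are \emph{exact} tangent segments. Following $M_r$ is the efficient choice in the middle but is beaten by straight chords near the ends, so one must locate the transition precisely, combining the first-order variational conditions at energetic points with the global no-cycle constraint, and then verify global — not merely local — optimality; it is exactly in this global comparison that the quantitative curvature bound $5r$ (respectively $4.98r$) must be invoked. The topological reduction, although intuitively clear, likewise requires care to exclude branch points.
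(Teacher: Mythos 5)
This theorem is not proved in the present paper at all: it is quoted from~\cite{cherkashin2018horseshoe}, and only some of its lemmas are adapted here (Section~\ref{sect:adapting}). So the relevant comparison is with the cited proof, and against that standard your text is an outline rather than a proof. The localization step ($\Sigma\subset N$ by projecting onto the convex hull) is fine and coincides with Lemma 2.4 of this paper, but the two steps that carry the actual content of the theorem are only asserted. First, the exclusion of branching points is dismissed as ``a short analysis \dots produces a strictly shorter connected competitor by rerouting.'' As stated, that argument uses nothing beyond convexity of $M$, and it cannot be correct at that level of generality: the boundary of a rectangle is a convex closed curve, and the main theorem of this very paper shows its minimizers \emph{do} contain Steiner points; likewise the horseshoe conclusion fails for a circle with $R<1.75r$. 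Hence any proof that branch points are absent must invoke the quantitative curvature hypothesis ($5r$, resp.\ $4.98r$), contrary to your opening claim that curvature ``enters only in the last, quantitative, step,'' and the rerouting competitor has to be constructed and estimated, not merely postulated.

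Second, the rigidity step --- that the interior of $\Sigma$ is an \emph{exact} arc of $M_r$, that the two end pieces are \emph{exact} tangent segments, where the transition occurs, and why the whole configuration beats every competitor globally --- is exactly where the thresholds $5r$ and $4.98r$ are used, and your text explicitly labels it ``the main obstacle'' and then only lists what ``must'' be done (first-order conditions at energetic points, the no-cycle constraint, a global comparison, a one-parameter calculus problem in the circular case) without doing any of it. No inequality involving the radius of curvature is ever derived, so neither the constant $5$ nor $4.98$ is touched. In short: the admissible-competitor and localization observations are correct, but the structural core of the theorem (no branching, identification of the energetic set with a single arc of $M_r$, tangency of the free segments, and the quantitative global comparison) is missing, so the proposal does not establish the statement.
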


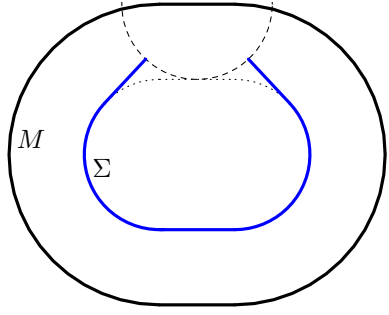
\begin{figure}[H]
	\begin{center}
		\definecolor{yqqqyq}{rgb}{0.,0.,1.}
		\definecolor{xdxdff}{rgb}{0.,0.,0.}
		\definecolor{ffqqqq}{rgb}{0.,0.,1.}
    \definecolor{ttzzqq}{rgb}{0.,0.,0.}
    \begin{tikzpicture}[line cap=round,line join=round,>=triangle 45,x=1.0cm,y=1.0cm]
    \clip(-5.754809259201689,-2.03198488708635626) rectangle (0.592320066152345,2.0268250892462824);
    \draw [very thick, color=ttzzqq] (-3.,2.)-- (-2.,2.);
    \draw [very thick, color=ttzzqq] (-2.,-2.)-- (-3.,-2.);
    \draw [very thick, color=ffqqqq] (-3.,-1.)-- (-2.,-1.);
    \draw [dotted] (-3.,1.)-- (-2.,1.);
    \draw [shift={(-3.,0.)},very thick, color=ttzzqq]  plot[domain=1.5707963267948966:4.71238898038469,variable=\t]({1.*2.*cos(\t r)+0.*2.*sin(\t r)},{0.*2.*cos(\t r)+1.*2.*sin(\t r)});
    \draw [shift={(-3.,0.)},dotted]  plot[domain=1.5707963267948966:4.71238898038469,variable=\t]({1.*1.*cos(\t r)+0.*1.*sin(\t r)},{0.*1.*cos(\t r)+1.*1.*sin(\t r)});
    \draw [shift={(-2.,0.)},dotted]  plot[domain=-1.5707963267948966:1.5707963267948966,variable=\t]({1.*1.*cos(\t r)+0.*1.*sin(\t r)},{0.*1.*cos(\t r)+1.*1.*sin(\t r)});
    \draw [shift={(-2.,0.)},very thick,color=ttzzqq]  plot[domain=-1.5707963267948966:1.5707963267948966,variable=\t]({1.*2.*cos(\t r)+0.*2.*sin(\t r)},{0.*2.*cos(\t r)+1.*2.*sin(\t r)});
    \draw [dash pattern=on 2pt off 2pt] (-2.5008100281931047,2.) circle (1.cm);
    \draw [very thick,color=yqqqyq] (-3.7309420990521054,0.6824394829091469)-- (-3.1832495111022516,1.2690579009478955);
    \draw [very thick,color=yqqqyq] (-1.8178907146013357,1.2695061868000874)-- (-1.2695061868000874,0.6829193135917684);
    \draw [shift={(-2.,0.)},very thick,color=ffqqqq]  plot[domain=-1.5707963267948966:0.7517515639553677,variable=\t]({1.*1.*cos(\t r)+0.*1.*sin(\t r)},{0.*1.*cos(\t r)+1.*1.*sin(\t r)});
    \draw [shift={(-3.,0.)},very thick,color=ffqqqq]  plot[domain=2.390497746093128:4.771687465439039,variable=\t]({1.*1.*cos(\t r)+0.*1.*sin(\t r)},{0.*1.*cos(\t r)+1.*1.*sin(\t r)});
    \draw[color=ttzzqq] (-4.7089822121844977,0.2025724503290166) node {$M$};
    \draw[] (-3.763965847825129,-0.1818662773850787) node {$\Sigma$};
    \end{tikzpicture}
    \caption{The horseshoe}
    \label{horseshoe}
	\end{center}
\end{figure}

In this section we adapt several lemmas from~\cite{cherkashin2018horseshoe} to the case of rectangle.

Note that the finiteness of $\H (\Sigma)$ implies that $\Sigma$ is path-connected (see, for example,~\cite{EiSaHa}).
By the absence of loops the path in $\Sigma$ between every couple of points of $\Sigma$ is unique.

\begin{lemma}
Let $N$ be convex, $\Sigma$ be an arbitrary minimizer for $M$. Then $\Sigma$ is a subset of $N$.
\end{lemma}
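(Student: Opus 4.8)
The plan is to argue by contradiction using a projection (retraction) onto the closed convex set $N$. Suppose $\Sigma$ is a minimizer and $\Sigma \not\subset N$. Let $\pi \colon \mathbb{R}^2 \to N$ be the nearest-point projection onto $N$; since $N$ is closed and convex, $\pi$ is well-defined and $1$-Lipschitz, and moreover it is a strict contraction away from $N$ in the sense that $\pi$ does not increase the length of any curve and strictly decreases the length of any subarc lying outside $N$ (a segment crossing $\partial N$ transversally gets strictly shortened). Consider $\Sigma' := \pi(\Sigma)$.

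First I would check that $\Sigma'$ is admissible: it is the continuous image of the compact connected set $\Sigma$, hence compact and connected. For the energy constraint, note that every point $y \in M \subset N$ satisfies $\pi(y) = y$, and since $\pi$ is $1$-Lipschitz, $\dist(y, \Sigma') = \dist(\pi(y), \pi(\Sigma)) \le \dist(y, \Sigma) \le r$; thus $F_M(\Sigma') \le r$. So $\Sigma'$ is a competitor. Second, I would compare lengths: $\mathcal{H}^1(\Sigma') \le \mathcal{H}^1(\pi|_\Sigma \text{ image}) \le \mathcal{H}^1(\Sigma)$ because $\pi$ is $1$-Lipschitz (Lipschitz maps do not increase Hausdorff measure). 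Since $\Sigma$ is a minimizer, equality $\mathcal{H}^1(\Sigma') = \mathcal{H}^1(\Sigma)$ must hold, so $\Sigma'$ is also a minimizer.

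The key step is then to derive a contradiction from the assumption that some point of $\Sigma$ lies outside $N$. Using that $\Sigma$ is path-connected with finite length and has no cycles, pick a point $p \in \Sigma \setminus N$ and look at the path in $\Sigma$ joining $p$ to any point of $\Sigma \cap N$ (such a point exists: $\Sigma$ must intersect $N$ since otherwise $\dist(y,\Sigma)$ for $y \in M$ would force $\Sigma$ closer, or more directly because $\pi(\Sigma)$ would have strictly smaller length than $\Sigma$ contradicting minimality — here I want to make the quantitative statement precise). The portion of this path outside $N$ is a nondegenerate arc that $\pi$ shortens strictly: projecting a curve segment that exits and re-enters (or terminates outside) the convex set $N$ through the boundary strictly decreases length, because the projection of a straight segment with an endpoint outside $N$ is strictly shorter whenever that endpoint is outside. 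This gives $\mathcal{H}^1(\Sigma') < \mathcal{H}^1(\Sigma)$, contradicting minimality of $\Sigma$. Hence $\Sigma \subset N$.

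The main obstacle is making the strict-decrease claim rigorous at the level of $\mathcal{H}^1$ rather than for a single smooth curve: one must handle the possibility that $\Sigma \setminus N$ is a complicated set and that $\pi$ might identify points (so $\Sigma'$ could be ``shorter'' merely through overlaps). The clean way around this is to isolate a single injective arc $\sigma \subset \Sigma$ (which exists by the structure theorem: $\Sigma$ is a finite union of injective images of $[0,1]$) that contains a sub-piece strictly outside $N$ with both endpoints of that sub-piece on $\partial N$, apply the retraction argument just to that sub-piece while leaving the rest of $\Sigma$ untouched, and note the replacement is still connected and still covers $M$. On that single arc the strict length inequality for $\pi$ is an elementary convex-geometry fact. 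Combined with $\mathcal{H}^1$ being only non-increasing elsewhere, this yields the strict global inequality and the desired contradiction.
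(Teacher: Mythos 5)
Your proposal is correct and is essentially the paper's own argument: the paper's proof is the one-line observation that projecting $\Sigma\setminus N$ onto the convex set $N$ strictly decreases length while preserving connectivity and the covering condition, and you simply carry out that projection argument in detail (1-Lipschitzness for admissibility and measure, plus isolating an arc outside $N$ to get the strict inequality). No substantive difference in method.
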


\begin{proof}
Suppose the contrary, then one can project $\Sigma \setminus N$ on $N$, and the length strictly decreases.
\end{proof}



\begin{lemma}
\begin{itemize}
    \item [(i)] The closure of every connected component of $\Sigma \cap \Int(N_r)$ is a solution of the Steiner problem for
some set of points belonging to $M_r$, and in particular consists only of line segments of positive length.
    \item [(ii)] The length of any line segment in $\Sigma \cap N_r$ does not exceed $2r$.
\end{itemize}
\label{ShortSegments}
\end{lemma}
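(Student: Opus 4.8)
The plan is to exploit the local structure theorem for minimizers (items (a)--(b) of the energetic-point properties, and the Teplitskaya regularity theorem) together with the variational characterization of $\Sigma$ as a length minimizer. For part (i), fix a connected component $K$ of $\Sigma \cap \Int(N_r)$ and let $\overline K$ be its closure; the key observation is that an interior point $x$ of $\overline K$ (i.e. $x \in \Int(N_r)$) cannot be energetic. Indeed, if $x$ were energetic, then by property (a) there would be a corresponding point $y(x) \in M$ with $|x - y(x)| = r$ and $B_r(y(x)) \cap \Sigma = \emptyset$; but $x \in \Int(N_r)$ forces $\dist(x, M) > r$ (since $N_r = \conv M_r$ and $M_r = N \cap \partial B_r(M)$ sits at distance exactly $r$ from $M$, so points strictly inside $N_r$ are farther than $r$ from $M$), which contradicts $|x - y(x)| = r$ — here I would need to check carefully the elementary geometric claim that $x \in \Int(N_r) \Rightarrow \dist(x,M) > r$, which uses convexity of $N$ and of the $r$-neighborhood structure. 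Hence every point of $K$ is non-energetic, so by property (b) each such point has a neighborhood in which $\Sigma$ is either a segment or a regular tripod. This makes $\overline K$ a finite union of segments meeting at $2\pi/3$ angles (finiteness from item (i) of the Teplitskaya theorem), with all its "free" endpoints lying on $\partial N_r \subset$ (the convex hull generated by) $M_r$. Finally, $\overline K$ must be \emph{length-minimal} among connected sets spanning its boundary points: if a shorter connected set $K'$ existed, replacing $K$ by $K'$ inside $N_r$ would not change $\gamma(\cdot)$ for the part of $M$ covered (nothing in $\Int(N_r)$ contributes a corresponding point, and on $\partial N_r$ the endpoints are retained, so connectivity of $\Sigma$ is preserved), contradicting minimality of $\Sigma$. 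Therefore $\overline K$ solves the Steiner problem for the (finite) set of its endpoints on $M_r$, and in particular consists only of positive-length segments.

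For part (ii), consider a line segment $[PQ] \subseteq \Sigma \cap N_r$. By part (i) (or directly by the regularity theorem) its relative interior consists of non-energetic, non-branching points, so the endpoints $P$ and $Q$ are each either branching points of $\Sigma$, energetic points, or points of $M_r$. The idea is that if $|PQ| > 2r$, then the midpoint $x_0$ of $[PQ]$ has $B_r(x_0) \subseteq B_r(\Sigma)$ trivially, but more importantly the \emph{open} segment around $x_0$ can be perturbed: cut out a tiny sub-segment of $[PQ]$ near $x_0$ and reconnect. The cleanest route is to show that a minimizer cannot contain a segment so long that deleting a middle portion still leaves $M$ covered — i.e. to argue that every point of such a long segment other than a small sub-segment near each end fails the defining inequality needed for energetic-point coverage. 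Concretely: a segment of length $> 2r$ in $N_r$ projects, under the nearest-point map to $M$, onto a portion of $M$ of diameter controlled by $2r$ minus the distance-to-$M$ excess, and one shows the two end-halves of $[PQ]$ already cover everything the whole segment covers; then shortening the segment to length $2r$ strictly decreases $\H(\Sigma)$ while keeping it connected and feasible, contradicting minimality.

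I expect the main obstacle to be \emph{part (ii)}: making rigorous the claim that a segment of length exceeding $2r$ can always be shortened without losing coverage. The subtlety is that a point in the middle of the segment might still be energetic — its corresponding point $y(x)$ sits on $M$ at distance $r$ with $B_r(y(x))$ disjoint from $\Sigma$ — and one must rule this out using the geometry of $N_r$ relative to $M$: since the segment lies in $N_r$, points of $M$ at distance exactly $r$ from an interior portion of the segment are also within distance $r$ of points of the segment nearer the endpoints, so no middle point can be the unique coverer of any $y \in M$. Formalizing this "no middle point is essential" statement — quantifying how far into $N_r$ the segment penetrates and bounding the corresponding arc of $M$ — is where the real work lies; the analogue in~\cite{cherkashin2018horseshoe} for a smooth convex curve uses the curvature bound, and here one substitutes the (flat) geometry of the rectangle's sides together with $N_r \subseteq N = \conv M$. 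Part (i) is then comparatively routine once the non-energeticity of interior points is established.
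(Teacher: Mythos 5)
Your part (i) is essentially the paper's own argument: points of $\Int N_r$ lie at distance greater than $r$ from $M$, hence are non-energetic; the component meets $M_r$ in finitely many points (finiteness of branching via the regularity theorem); and since no modification inside $\Int N_r$ affects $F_M$ or connectivity once those boundary points are retained, minimality of $\Sigma$ forces the closed component to be a Steiner tree on them. Nothing to object to there.

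Part (ii), however, has a genuine gap, and it is not the difficulty you flag. Coverage is the easy half: a segment of $\Sigma\cap N_r$ lies at distance at least $r$ from $M$, so (in the relevant case of a segment whose interior lies in $\Int N_r$) its interior covers nothing, and no nearest-point-projection estimate is needed; your worry about an energetic midpoint is already excluded by the non-energeticity you proved in (i). The real obstruction is connectivity, which your proposal never resolves: since $\Sigma$ has no cycles, the segment $[AB]$ is a bridge, its endpoints are pinned to the rest of $\Sigma$, and ``shortening the segment to length $2r$'' or ``cutting out a tiny sub-segment and reconnecting'' is not an available surgery --- deleting any interior portion splits $\Sigma$ into two components $\Sigma_1,\Sigma_2$, and a local rejoin of the two cut ends costs exactly the length you removed, so nothing is gained. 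The paper's proof supplies precisely the missing idea: delete the whole open segment $]AB[$; the sets $\gamma(\Sigma_1)$ and $\gamma(\Sigma_2)$ are closed and together still cover the connected set $M$, hence they share a point $y\in M$; choosing $\sigma_1\in\Sigma_1$, $\sigma_2\in\Sigma_2$ with $\dist(y,\sigma_1),\dist(y,\sigma_2)\le r$ and adding $[y\sigma_1]\cup[y\sigma_2]$ restores connectivity and coverage at cost at most $2r<|AB|$, contradicting minimality. Without this global reconnection through a commonly covered point of $M$ (or an equivalent device), your argument for (ii) does not close.
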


\begin{proof}
Let $S$ be the closure of an arbitrary connected component of $\Sigma \cap \Int(N_r)$. Clearly all points of $S$ are non-energetic. Since $\Sigma$ consists of a finite number of curves, $S\subset \Sigma$ contains only finite number of branching points (see~\cite{teplitskaya2018regularity,teplitskaya2019regularity}) and thus the set $S \cap M_r$ is finite. No change in the set $\Int(\Sigma \cap N_r)$ influences the value of $F_M (\Sigma)$, so if we replace $S$ with a Steiner tree connecting $S \cap M_r$, then the length of the resulting set should remain the same by the optimality of $\Sigma$, and thus $S$ is itself a Steiner tree connecting $S \cap M_r$ as claimed.

Assume the contrary to the statement of item~(ii), i.e. there exists a segment $[AB]\in \Sigma \cap N_r$ with a length greater than $2r$. As $\Sigma$ doesn't contain loops, the set $\Sigma \, \setminus \, ]AB[$ consists of exactly two connected components, let us call them $\Sigma_1$ and $\Sigma_2$; these connected components are closed. 
By the definition $\gamma(\Sigma_1)$ and $\gamma(\Sigma_2)$ are closed and, as set $M$ is connected, the set $\gamma(\Sigma_1) \cap \gamma(\Sigma_2)$ is not empty. Let us consider an arbitrary point $A \in \gamma(\Sigma_1) \cap \gamma(\Sigma_2)$. 
As the sets $\Sigma_1$ and $\Sigma_2$ are closed, there exists such points $\sigma_1 \in \Sigma_1$ and $\sigma_2 \in \Sigma_2$ that $\dist(A,\sigma_1), \dist(A,\sigma_2) \leq r$.
Then $\Sigma_1 \cup \Sigma_2 \cup [A\sigma_1] \cup [A\sigma_2]$ has smaller length and not greater energy than $\Sigma$. So we get a contradiction.
\end{proof}

It appears that $\Sigma$ is located near the boundary of $M$.

\begin{figure}[h]
    \centering
    \begin{tikzpicture}[scale=1.2]
        \def\rr{1.5}
        \def\hh{(-0.832*\rr, -0.5547*\rr)}
        
        \coordinate(B) at (0.535*\rr, \rr);
        \coordinate(B1) at (-.191*\rr, 2.0897*\rr);
        \coordinate(B2) at (1.845*\rr, \rr);
        \coordinate(B15) at (.827*\rr, 1.545*\rr);
        \coordinate(B3) at (3.845*\rr, \rr);
        \coordinate(C2) at (7, \rr);
        \coordinate(C1) at (-3+1.2*\rr, 4.5);
        
        \fill[gray!10!white] (C1) -- (B1) -- (B2) -- (C2) -- (7, 4.5) -- cycle;
        \draw (5.5, 3.5) node{\large $K_r$};

        \draw[very thick] (7,0) -- 
        (0, 0) node[below left]{$A_i$} -- 
        (-3, 4.5);
        \draw (C1) -- (B1) -- 
        (B2) node[above right, pos=0.7]{$2r/\sqrt{3}$} -- 
        (B3) node[above, pos=0.5]{$2r$} --
        (C2);
        
        \draw [shift={(B)}, dashed] (0,0) -- (0, -\rr) node[right,pos=0.45]{$r$};
        \draw [shift={(B2)}, dashed] (0,0) -- (0, -\rr);
        \draw [shift={(B3)}, dashed] (0,0) -- (0, -\rr);
        \draw [dashed] (B1) -- (B) -- (B2);
        \draw [dashed] (0, 0) -- (B15);
        
        \def\pp{0.2}
        \draw [shift={(B)}] (0, -\rr+\pp) -- (\pp, \pp-\rr) -- (\pp, -\rr);
        \draw [shift={(B2)}] (0, -\rr+\pp) -- (\pp, \pp-\rr) -- (\pp, -\rr);
        \draw [shift={(B3)}] (0, -\rr+\pp) -- (\pp, \pp-\rr) -- (\pp, -\rr);
        \draw [shift={(B15)}, rotate = 240] (\pp, 0) -- (\pp, \pp) -- (0, \pp);
        
        \def\qq{0.3}
        \draw (\qq, 0) arc (0 : 62 : \qq);
        \draw[rotate=62] (\qq+0.07, 0) arc (0 : 62 : \qq+0.03);  
        \draw[shift={(B)}] (\qq, 0) arc (0 : 62 : \qq);
        \draw[shift={(B)},rotate=62] (\qq+0.07, 0) arc (0 : 62 : \qq+0.03); 
        \fill (B15) circle (1.4pt);
        \fill (B3) circle (1.4pt);
        
        \draw [decorate , decoration={brace, mirror, amplitude=3pt},      
            xshift=0pt,yshift=-2pt]
        (0, 0) -- (1.845*\rr, 0) node [midway,yshift=-0.3cm] {$p_M(i)$};
        
    \end{tikzpicture}
    \caption{The region without branching points near the angle}
    \label{fig:scary_formula}
\end{figure}
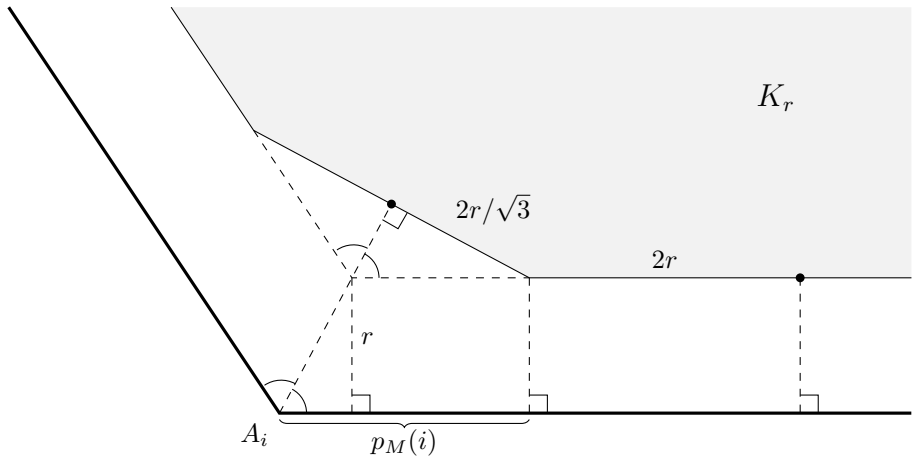

\begin{lemma}
Let $M$ be an $n$-gon with vertices $A_i$ and angles $\angle A_i  \geq \pi/3$, $i=1\dots n$, and let $r>0$ be sufficiently small. 
Let us define the following sets (see Fig.~\ref{fig:scary_formula}) 
\[
K_r \defeq \conv \left( N_r \setminus \bigcup_{i=1}^n B_{r\sqrt{1+p_M(i)^2}}(A_i) \right ),
\]
where 
\[
p_M(i) \defeq \frac{2}{\sqrt 3 \sin \frac{\angle A_i}{2}}+\cot \frac{\angle A_i}{2}.
\]

In other words, the boundary $\partial K_r$ is $2n$-gon, such that its sides parallel to the sides of 
$M$ at the distance $r$, and sides of length $ \frac{4r}{\sqrt 3} $, each of which is the base of an isosceles triangle with the apex at the nearest vertex of the polygon $A_i$, alternate.

Then $\Sigma$ has no branching points in the area $\Int K_r$.

\label{KarKar}
\end{lemma}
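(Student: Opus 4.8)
The plan is to argue by contradiction: suppose $\Sigma$ has a branching point $x$ in $\Int K_r$, and produce a competitor of strictly smaller length with no greater energy. The starting point is Lemma~\ref{ShortSegments}: since $K_r \subset N_r$, every segment of $\Sigma$ meeting $\Int K_r$ has length at most $2r$, and near the branching point $x$ the set $\Sigma$ is (by the regularity theorem of Teplitskaya and by item~(b) on non-energetic points) a regular tripod with three segments leaving $x$ at pairwise angles $2\pi/3$. I would follow the scheme of the corresponding lemma in~\cite{cherkashin2018horseshoe}: the key geometric fact is that, because the three tripod legs make $2\pi/3$ angles and $x$ lies at distance $\geq r$ (in fact $\geq r\sqrt{1+p_M(i)^2}$ from each $A_i$ and at distance $\geq r$ from each side, since $x\in K_r\subset N_r$), the three corresponding points $y(x)$ of the energetic endpoints — or rather the points of $M$ that force the three legs to be present — must lie in specified regions of $M$. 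The construction $K_r$ is designed precisely so that this is impossible: a tripod whose center is that far inside forces one of the three "wedges" it is responsible for covering to be empty of $M$, because $M$ near a vertex $A_i$ has been excised out to radius $r\sqrt{1+p_M(i)^2}$, and the quantity $p_M(i)$ is exactly the reach along the side needed so that the base of the isosceles triangle at $A_i$ is tangent to the relevant $r$-ball.

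Concretely, the steps I would carry out are: (1) localize — show the tripod at $x$ together with the rest of $\Sigma$ covers a connected piece $\gamma$ of $\partial M$, and identify which side(s) of $M$ and which vertex neighborhoods are relevant to $x$; since $x\in\Int K_r$ and the legs have length $\le 2r$, everything happens within a bounded neighborhood, involving at most the two sides adjacent to one vertex $A_i$ or a single side. (2) Remove a short sub-segment of one leg of the tripod near $x$, splitting $\Sigma$ into two components $\Sigma_1,\Sigma_2$ as in the proof of Lemma~\ref{ShortSegments}(ii); the set $\gamma(\Sigma_1)\cap\gamma(\Sigma_2)$ is nonempty and contained in a specific small arc of $\partial M$. (3) Reconnect by a strictly shorter bridge — here one uses that the $2\pi/3$ angle condition makes the tripod locally length-minimal only if all three branches are genuinely needed, and the definition of $K_r$ (with the precise $r\sqrt{1+p_M(i)^2}$ exclusion radius at $A_i$) guarantees that the third branch covers nothing that the other two, suitably extended by a bridge of length $<$ (removed length), do not already cover. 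This is the standard "a branching point too far from $M$ can be short-cut" argument, and the role of $p_M(i)$ is to make the threshold tight near the angles.

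The main obstacle — and the reason the lemma is stated with this particular $K_r$ rather than a crude $cr$-neighborhood — is the bookkeeping near a vertex $A_i$ with a possibly small angle $\angle A_i \ge \pi/3$: when the angle is close to $\pi/3$, the two sides of $M$ come together sharply, the $r$-neighborhoods of the two sides overlap deeply, and a tripod could in principle cover points on both sides simultaneously, which is what makes naive shortcutting fail. The quantity $p_M(i) = \tfrac{2}{\sqrt3\sin(\angle A_i/2)} + \cot(\angle A_i/2)$ is engineered so that the excised ball $B_{r\sqrt{1+p_M(i)^2}}(A_i)$ reaches far enough along each side that the base segment of length $4r/\sqrt3$ of $\partial K_r$ is exactly the locus where a tangent segment to $M_r$ would sit; beyond that, any interior branching point provably wastes length. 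So the hard part is the careful case analysis verifying, for a tripod centered in $\Int K_r$, that in each configuration (relative to one side, or straddling a vertex) one of the three legs is redundant, using trigonometric estimates governed by $p_M(i)$; the rest is the now-routine cut-and-reconnect contradiction. I would import the relevant estimates from~\cite{cherkashin2018horseshoe} verbatim wherever the polygon's local picture matches the convex-curve picture, and only redo by hand the vertex case, which is the genuinely new ingredient for $M$ a polygon.
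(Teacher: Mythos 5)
Your plan replaces the paper's argument with a cut-and-reconnect competitor construction, and the crucial step (3) is exactly where it breaks. If you delete an $\varepsilon$-long piece of one tripod leg near the branching point $x$, the two resulting components have their cut ends at distance $\varepsilon$, and you save only $\varepsilon$ of length; any bridge restoring connectivity costs at least comparably, and indeed a regular tripod with $2\pi/3$ angles is locally length-minimal for joining its three branch directions, so no local surgery at $x$ yields a strictly shorter connected competitor. The claimed way out --- that ``the third branch covers nothing that the other two plus a short bridge do not cover'' --- cannot be made to work, because it rests on a misconception: every point of $\Int N_r$ (in particular the whole tripod near $x$) lies at distance greater than $r$ from $M$ and covers nothing at all. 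The legs are present purely for connectivity to remote parts of $\Sigma$, each of which covers an essential, non-redundant portion of $M$; there are also no ``corresponding points $y(x)$'' to speak of, since $x$ and its neighborhood are non-energetic. So the redundancy you need is simply absent, and no strictly shorter competitor is produced by this scheme.

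The paper's proof is of a different nature --- structural and extremal rather than variational. It introduces the nested family of $2n$-gons $K_l$ (sides parallel to $M$ alternating with corner-cutting sides of length $4r/\sqrt3$; the hypothesis $\angle A_i\ge\pi/3$ and the choice of $p_M(i)$ guarantee all sides of every relevant $K_l$ have length at least $4r/\sqrt3$ and all angles are at least $2\pi/3$), and considers a branching point $a\in\partial K_{l(a)}$ with $l(a)>r$ maximal. Since $\Sigma$ has no endpoints in $\Int N_r$ and, by Lemma~\ref{ShortSegments}, all its segments there have length at most $2r$, any tripod leg from $a$ going into $\Int K_{l(a)}$ must end at another branching point; maximality of $l(a)$ forces both inward legs to cross the two adjacent sides of $\partial K_{l(a)}$, and then a law-of-sines computation combined with concavity of $\sin$ on $(0,\pi)$ shows the side of $\partial K_{l(a)}$ containing $a$ would have length strictly less than $4r/\sqrt3$ --- contradicting the construction of $K_l$. (A preliminary hexagon argument rules out points of $\Sigma$ with $\overline{B_{4r}(a)}\subset\Int N_r$, and the vertex case reduces to the side case.) None of this machinery --- the innermost-layer extremal choice, the minimal side length $4r/\sqrt3$, the sine-concavity estimate --- appears in your proposal, so the genuinely new idea needed for the lemma is missing, not merely deferred to citations.
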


\begin{proof}
Since the length of a segment of $\Sigma$ in $\Int N_r$ is not greater than $2r$, it is easy to see that for an arbitrary point $a\in \Sigma$ the statement $\overline {B_{4r}(a)} \nsubseteq \Int N_r$ holds. Otherwise let us consider such a point $a$. Recall that each point of $\Sigma$ in $\Int N_r$ is a center of a segment or of a regular tripod, and all segments are not longer than $2r$. Then there exists such a regular hexagon with sides of length $ 2r $, angles of $ 2 \pi / 3 $ and a vertex in $ a $, that it has to contain an endpoint of $\Sigma $. However this is impossible since this hexagon is contained inside the ball $ B_{4r}(a) $ which lies in $ \Int N_r $ and therefore does not contain energetic points of $ \Sigma $. 

We define set $K_l \subset N$ as follows (see Fig.~\ref{lemma25KL}): 
\begin{itemize}
\item let $\partial K_l$ be $2n$-gon, in which the sides parallel to the sides $ M $ and the sides with length  $\frac{4r}{\sqrt 3}$ having equal side angles alternate;
\item the distance from a side $M$ to the side of $\partial K_l$, parallel to it, is equal to $l$.

\end{itemize}

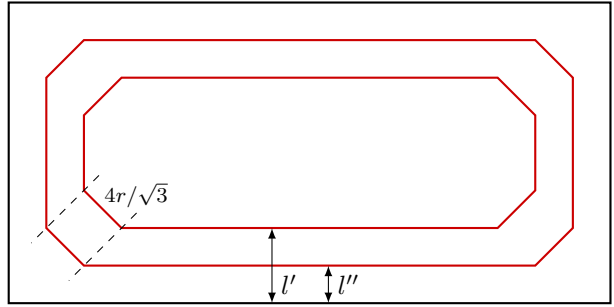
\begin{figure}[h]
\tikzset{>={latex[length=6pt, width=4pt]}}
    \hfill
    \begin{tikzpicture}[]
    
    \draw[thick] (-4, -2) -- (4, -2) -- (4, 2) -- (-4, 2) -- cycle;
    
    \def\aa{3}
    \def\bb{1}
    \def\rr{.5}
    \draw[thick, red!80!black] 
    (-\aa + \rr, -\bb) -- (-\aa, -\bb + \rr)  -- 
    (-\aa, \bb - \rr) -- (-\aa + \rr, \bb) --
    (\aa - \rr, \bb) -- (\aa, \bb - \rr) --
    (\aa, -\bb + \rr) -- (\aa - \rr, -\bb) -- cycle;
    
    \edef\bbbb{\bb}
    \edef\aaaa{\aa}
    \def\aa{3.5}
    \def\bb{1.5}
    \def\rr{.5}
    \draw[thick, red!80!black] 
    (-\aa + \rr, -\bb) -- (-\aa, -\bb + \rr)  -- 
    (-\aa, \bb - \rr) -- (-\aa + \rr, \bb) --
    (\aa - \rr, \bb) -- (\aa, \bb - \rr) --
    (\aa, -\bb + \rr) -- (\aa - \rr, -\bb) -- cycle;
    
    \draw[<->] (-.5, -2) -- (-.5, -\bbbb)  node[right, pos=0.25]{$l'$};
    \node[black,above right]  at(-\aaaa-.1 + \rr/2, -\bbbb-.1 + \rr/2) {\footnotesize$4r/\sqrt{3}$};
    
    \draw[dashed, thin] (-\aaaa + 0.2 + \rr, -\bbbb + 0.2) -- (-\aa + \rr - 0.2, -\bb - 0.2);
    \draw[dashed, thin] (-\aaaa + 0.2, -\bbbb +0.2 + \rr) -- (-\aa - 0.2, -\bb - 0.2 + \rr);
    
    \draw[<->] (.25, -2) -- (.25, -\bb)  node[right, pos=0.5]{$l''$};

    \end{tikzpicture}
    \hfill
    $\mathstrut$
    \caption{Sets $K_{l'}$ and $K_{l''}$ for the rectangle with $l'>l''$}  
    \label{lemma25KL}
\end{figure}

Let $l_0$ be the maximal number such that each side of $\partial K_{l_0}$ is not shorter than $\frac{4r}{\sqrt 3}$, and $r$ is small enough to $B_{4r}(K_{l_0}) \subset \Int N_r$ holds. Assume $l>0$ doesn't exceed $l_0$. Note that then all sides of $ \partial K_l $ are not degenerate. It is easy to see that in view of the condition on the angles of the polygon $ M $, all the angles of the polygon $ \partial K_l $ are at least $ 2 \pi / 3 $.

Thus for any $ l $, such that the intersection  $\Sigma \cap \partial K_l$ is nonempty, all sides of $ K_l $ have length at least $ \frac{4r}{\sqrt 3} $.

Suppose the opposite to the assertion of the lemma: let $ \Int K_r $ contain a branching point. Then consider a branching point on the innermost layer: let $ a \in \partial K_{l(a)} $ be such a branching point that $ l (a)> r $ is maximal.
Let $ a $ lie on the interior of a side of $ \partial K_{l(a)} $. Then one or two segments of $ \Sigma $ of length at most $2r $ go from $ a $ into $ \Int K_{l(a)} $. We are going to show that the end of at least one of them is contained in $ \Int K_{l(a )} $. Since the end of the segment can only be at the branching point, this would contradict the choice of $ l(a) $. Indeed, if only one segment goes into $ \Int K_{l(a)} $, the statement is obvious. Let us examine in detail the case where two segments go out in $ \Int K_
{l (a)} $.

\begin{figure}[h]
\tikzset{>={latex[length=6pt, width=4pt]}}
    \hfill
    \begin{tikzpicture}[]
    \footnotesize
    \draw [very thick] (5.27,3.38) -- (2,0)-- (-2,0) -- (-4.83,3.82);
    \draw [thick, blue] (3.45,1.5)-- (0.61,0) node[above left, black,pos=.5]{$\leq 2r$};
    \draw [thick, blue] (0.61,0)-- (-4.276,3.072) node[above right, black,pos=.5]{$\leq 2r$};
    \fill [black] (2,0) circle (2pt);
    \draw[] (2,0.) node[below] {$B_2$};
    \fill [black] (-2,0) circle (2pt);
    \draw[] (-2,0) node[below] {$B_1$};
    \draw[] (-2,0.2) node[right] {$\geq 2\pi/3$};
    \fill [blue] (0.61,0) circle (2pt);
    \draw[] (0.61,0.) node[below] {$a$};
    \draw[] (0.65,0.49) node {$2\pi/3$};
    \draw[] (0.,0.2) node {$\alpha$};
    \fill [blue] (-4.276,3.072) circle (2pt) node[below left,black]{$C_1$};
    \fill [blue] (3.45,1.5) circle (2pt) node[below right,black]{$C_2$};
    
    \end{tikzpicture}
    \hfill
    $\mathstrut$
    \caption{The case when two segments go out in $ \Int K_{l (a)} $}
    \label{lemma2.5.th.sin}
\end{figure}
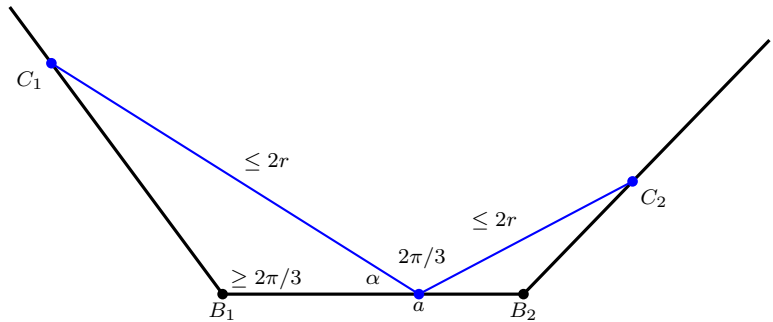

Let none of these segments end in $ \Int K_{l (a)} $. Then both segments intersect the sides of $ K_{l (a)} $; it is clear that these sides are adjacent to the one, containing point $ a $. Let us denote the ends of the side on which $ a $ lies as $ B_1 $ and $ B_2 $, and the points of intersection of the tripod segments with the sides outgoing from $ B_1 $ and $ B_2 $, as $ C_1 $ and $ C_2 $, respectively (see Fig.~\ref{lemma2.5.th.sin}). Let $ \angle B_1aC_1 = \alpha $. Then
\begin{gather*}
    \angle aC_1B_1 \leq \pi /3 - \alpha,\qquad
    \angle B_2aC_2 = \pi /3 - \alpha,\qquad
    \angle aC_2B_2 \leq \alpha.
\end{gather*}
Moreover, $|aC_1|, |aC_2| \leq 2r$. Hence, by the law of sines 
\begin{equation*}
    |aB_1| + |aB_2| =
    \frac{\sin \angle aC_1B_1 \cdot |aC_1|}{\sin aB_1C_1}
    + 
    \frac{\sin \angle aC_2B_2 \cdot |aC_2|}{\sin aB_2C_2}
    \leq
    \frac{\sin(\pi /3 - \alpha) \cdot 2r}{\sin 2\pi/3}
    +
    \frac{\sin\alpha \cdot 2r}{\sin 2\pi/3} 
    =
    \frac{4r}{\sqrt 3} (\sin (\pi/ 3 - \alpha) + \sin \alpha).
\end{equation*}
Furthermore, the sine in the interval from $ 0 $ to $ \pi $ is concave, therefore
\begin{equation}
    \frac {\sin (\pi/3 - \alpha)}2 + \frac{\sin \alpha} 2 <
    \sin \left(\frac{\pi/3 - \alpha + \alpha}2 \right) = \sin \frac\pi6 = \frac 12.
\end{equation}
Thus the length of the side of $K_l$, containing a point $a$, is equal to  $|aB_1| + |aB_2| < 4r/\sqrt 3$. This contradicts the condition on the length of the minimal side of $K_l$.

Now, let point $a$ coincide with a vertex of $K_{l(a)}$, then there exists a segment of $\Sigma$ with one end at $a$ and another end in $\Int K_{l(a)}$ or at the interior of a side $\partial K_{l(a)}$. The first case contradicts the definition of $l(a)$, and the second has already been considered. Thus  the initial assumption about the presence of branching points in $\Int K_r$ was wrong.
\end{proof}

Recall that $p_M(i) = \frac{2}{\sqrt 3 \sin \frac{\angle A_i}{2}}+\cot \frac{\angle A_i}{2}$.
Let \[
P_r \defeq \Int\left(\conv\left(N_r \setminus \bigcup_{i=1}^n B_{\sqrt{1+(p_M(i)+2)^2}r}(A_i)\right)\right).\]
Clearly, $P_r \subset K_r\subset N_r$.

\begin{corollary}
The statement $\Sigma \cap P_r =\emptyset$ holds.
\label{CorKar}
\end{corollary}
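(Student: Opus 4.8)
The plan is to combine the two structural facts already available for $\Sigma$ inside $\Int K_r$ — no branching points (Lemma~\ref{KarKar}) and no line segment of length exceeding $2r$ (Lemma~\ref{ShortSegments}) — with the explicit geometry linking $P_r$ to $K_r$. First I would record that $P_r\subset K_r$ is open, hence $P_r\subset\Int K_r$, and that no point of $\Sigma$ in $\Int N_r\supseteq\Int K_r$ is energetic (an energetic point lies at distance exactly $r$ from $M$, while every point of $\Int N_r$ is at distance strictly more than $r$ from $M$). With Lemma~\ref{KarKar} it follows that every point of $\Sigma\cap\Int K_r$ is the centre of an honest line segment of $\Sigma$; hence each connected component of $\Sigma\cap\Int K_r$ is a subsegment of a line which cannot terminate inside the open set $\Int K_r$ (that would force an endpoint or a branching point of $\Sigma$ there, neither of which occurs in $\Int K_r$), so its closure is a chord $[PQ]$ of $K_r$ with $P,Q\in\partial K_r$, $\,]PQ[\,\subset\Int K_r$, and $|PQ|\le 2r$ by Lemma~\ref{ShortSegments}(ii). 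Since $\Sigma\cap P_r$ is contained in the disjoint union of these chords, it suffices to show that each such chord is disjoint from $P_r$.

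Because $\,]PQ[\,\subset\Int K_r$, the points $P$ and $Q$ lie on two distinct sides of the $2n$-gon $\partial K_r$, so $\dist(P,Q)$ is at least the distance between those two sides. For $r$ small, any two non-adjacent sides of $\partial K_r$ are more than $2r$ apart — the closest such pair being two long sides separated by a single short side, whose nearest points are the two ends of that short side, at distance $\tfrac{4r}{\sqrt3}>2r$ — so the two sides carrying $P$ and $Q$ are adjacent; since $\partial K_r$ alternates long and short sides, one is a long side and the other is the short side, call it $\ell_i$, of length $\tfrac{4r}{\sqrt3}$ sitting at some vertex $A_i$ of $M$. Let $W$ be their common endpoint, and say $P$ lies on the long side and $Q\in\ell_i$. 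As the interior angle of $\partial K_r$ at $W$ is at least $2\pi/3$ (proved in the course of Lemma~\ref{KarKar}), in the triangle $PWQ$ the side $PQ$ is the longest, so $|PW|\le|PQ|\le 2r$.

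Finally I would analyse the corner at $A_i$. Write $\beta=\tfrac12\angle A_i$ and let $\ell_i'$ be the chord that truncates $P_r$ there, i.e. the new side produced by removing $B_{\sqrt{1+(p_M(i)+2)^2}\,r}(A_i)$. By symmetry both $\ell_i$ and $\ell_i'$ are perpendicular to the bisector of $\angle A_i$; a short computation from the definitions of $K_r$ and $P_r$ — the circles of radii $\sqrt{1+p_M(i)^2}\,r$ and $\sqrt{1+(p_M(i)+2)^2}\,r$ about $A_i$ cross the line carrying the long side at two points $2r$ apart along that line, the first being $W$ — shows that $\ell_i'$ is parallel to $\ell_i$ and lies at distance $2r\cos\beta$ from it on the far-from-$A_i$ side, while $P_r$ lies strictly on the far-from-$A_i$ side of $\ell_i'$. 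Now $Q\in\ell_i$ is at distance $2r\cos\beta$ from $\ell_i'$, on the near-$A_i$ side. And $P$ lies on the long side, which leaves $W$ at angle $\beta$ to the bisector heading away from $A_i$, so the displacement from $W$ to $P$ (of length $|PW|\le 2r$) reduces the distance to $\ell_i'$ by $|PW|\cos\beta$, leaving $P$ at distance $(2r-|PW|)\cos\beta\ge0$ from $\ell_i'$, still on the near-$A_i$ side. Hence $P$ and $Q$, and therefore by convexity the whole segment $[PQ]$, lie on the closed near-$A_i$ side of $\ell_i'$; thus $[PQ]\cap P_r=\emptyset$, which proves the corollary.

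I expect the main work to be the last paragraph, and in particular the normal form it rests on: that for $r$ small $\conv\big(N_r\setminus\bigcup_i B_{\sqrt{1+(p_M(i)+2)^2}\,r}(A_i)\big)$ is $N_r$ with each corner truncated by the corresponding chord, so that $P_r$ genuinely sits strictly beyond that chord. This reduces to checking that each removed ball swallows the adjacent corner of $N_r$ — it does, since its radius $\sqrt{1+(p_M(i)+2)^2}\,r$ exceeds the distance $r/\sin\beta$ from $A_i$ to that corner — and that the truncating chords at distinct vertices do not interact, both routine once $r$ is small. Everything else (the chord structure of $\Sigma\cap\Int K_r$, the localisation near a vertex, and the final distance comparison) is then immediate from the cited lemmas.
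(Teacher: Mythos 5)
Your proposal is correct and takes essentially the same route as the paper's own proof: the absence of energetic points in $\Int N_r$, the absence of branching points in $\Int K_r$ (Lemma~\ref{KarKar}) and the $2r$ bound on segments (Lemma~\ref{ShortSegments}) reduce $\Sigma\cap\Int K_r$ to short chords of $K_r$ hugging its vertices, which the built-in $2r$ gap between the radii $\sqrt{1+p_M(i)^2}\,r$ and $\sqrt{1+(p_M(i)+2)^2}\,r$ keeps away from $P_r$. Your explicit half-plane separation by the truncating chord at each corner is just a more careful rendering of the paper's terse closing distance estimate, so the substance of the argument is the same.
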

\begin{proof}
The set $\Sigma$ has no terminal points or points of order $2$ in the set $\Int N_r \supset \Int K_r$, all segments of $\Sigma \cap \Int N_r$ are not longer than $2r$, and $\Sigma$ has no branching points in the area $\Int K_r$. Then the ends of any segment $s$ intersecting area $\Int K_r$ lie on $\partial K_r$ not farther than $2r$ from vertices of $K_r$, and hence are at a distance not greater than $(p_M(i)+2)r$ from vertices of $M_r$. Then all points of $s$ are located at a distance at most $\sqrt{1+(p_M(i)+2)^2}r$ from the vertices of $ M $.
\end{proof}

\section{Derivation in the picture}
\label{diff}
The following lemma is essentially proved in~\cite{cherkashin2018horseshoe}; we prove it for sake of completeness.

\begin{lemma}\label{lm:energetic_points_angles}
\begin{itemize}
    \item [(i)] Let $Q$ be an energetic point of degree 1 (i.e. $Q$ is the end of the segment $[QX] \subset \Sigma$) with unique $y(Q)$. Then $Q$, $X$ and $y(Q)$ lie on the same line.
    
    \item [(ii)] Let $W$ be an energetic point of degree 2 (i.e. $W$ is the end of the segments $[WZ_1]$ and $[WZ_2] \subset \Sigma$) with unique $y(W)$. Then 
    $\angle Z_1Wy(W) = \angle y(W)WZ_2$.
\end{itemize}
\end{lemma}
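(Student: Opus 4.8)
The plan is to argue by first variation: to test the minimality of $\Sigma$ against a competitor obtained by a small local surgery near the point in question ($x\defeq Q$ in~(i), $x\defeq W$ in~(ii)) and to read the claimed condition off the resulting first-order optimality. Put $y\defeq y(x)$, so that $|xy|=r$ and $B_r(y)\cap\Sigma=\emptyset$. For small $\rho>0$ the set $\Sigma\cap\overline{B_\rho(x)}$ is, in~(i), the segment $[Qp]$ with $p$ the exit point of $[QX]$ on the ray $[QX)$, and, in~(ii), the pair $[Wp_1]\cup[Wp_2]$ with $p_i\in[WZ_i]$. I would replace this piece by one with the same exits: a segment $[pQ']$ in~(i), and a pair $[W'p_1]\cup[W'p_2]$ obtained by displacing only the junction in~(ii), where the free point $x'\in\{Q',W'\}$ is taken close to $x$. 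Granting that such a competitor is admissible as soon as $x'\in\overline{B_r(y)}$ and $x'$ is close enough to $x$ (the delicate point, discussed last), minimality of $\Sigma$ shows that $x$ cannot be displaced inside $\overline{B_r(y)}$ so as to shorten the relevant length functional, namely $Q'\mapsto|pQ'|$ in~(i) and $W'\mapsto|W'p_1|+|W'p_2|$ in~(ii).

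\paragraph{Extracting the geometric conditions.} The constraint $|x'y|\le r$ is active at $x$ (indeed $|x\,y(x)|=r$ by the definition of a corresponding point) and $|x'-y|$ is smooth there, so a Karush--Kuhn--Tucker condition applies. In~(i) it asserts that the unit vector from $p$ to $Q$ points in the direction from $Q$ to $y$; since $p$ lies on the ray $[QX)$, this puts $y$ on the ray issued from $Q$ opposite to $[QX)$, i.e.\ $Q$, $X$, $y(Q)$ are collinear with $Q$ between $X$ and $y(Q)$. In~(ii) it asserts
\[
\frac{W-p_1}{|Wp_1|}+\frac{W-p_2}{|Wp_2|}\ =\ \mu\,(y-W),\qquad \mu\ge 0 .
\]
Writing $u_i$ for the unit vector from $W$ along the ray $[WZ_i)$, one has $\tfrac{W-p_i}{|Wp_i|}=-u_i$ (because $p_i\in[WZ_i]$), so the identity becomes $u_1+u_2=\mu(W-y)$ with $\mu\ge 0$; hence $[Wy(W))$ is the ray opposite to the internal bisector of $\angle Z_1WZ_2$, which is exactly the statement $\angle Z_1Wy(W)=\angle y(W)WZ_2$ (both equal $\pi-\tfrac12\angle Z_1WZ_2$). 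In the degenerate subcase where $[WZ_1]$ and $[WZ_2]$ are collinear the KKT identity is uninformative, but the conclusion still holds, since $W$ must then be the foot of the perpendicular from $y(W)$ onto $(Z_1Z_2)$ — otherwise $B_r(y(W))$ would meet $\Sigma$.

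\paragraph{The main obstacle.} The step I expect to require the most care is the admissibility of the small competitors, i.e.\ that the surgered set keeps energy at most $r$; here the uniqueness of $y(x)$ is decisive. A point $y'\in M$ can become uncovered only if $\dist(y',\Sigma\setminus B_\rho(x))>r$; and if $y'_n$ were such points for radii $\rho_n\to 0$ with $y'_n\to y^\ast$, then $\dist(y^\ast,\Sigma\setminus\{x\})\ge r$, so $\dist(y^\ast,\Sigma)=r$ is realised only at $x$, making $y^\ast$ a corresponding point of $x$, whence $y^\ast=y$ by hypothesis. Thus for $\rho$ small every ``at-risk'' point lies in an arbitrarily small neighbourhood of $y$, and its nearest point of $\Sigma$ lies within $\rho$ of $x$, so the direction in which it sees that nearest point is uniformly close to the direction $y\to x$. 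Consequently a displacement $x\mapsto x+tv$ with $v$ pointing strictly into $B_r(y)$ keeps, for $\rho$ and $t$ small, all these (localized, uniformly controlled) at-risk points at distance $\le r$ from the new set — which is what makes it an admissible competitor — and one checks that among the directions $v$ that also strictly shorten the length functional a non-empty choice exists precisely when the collinearity, resp.\ the bisector, condition fails; this is the desired contradiction. Carrying out this bookkeeping with explicit constants is in essence the argument of~\cite{cherkashin2018horseshoe}, which I would adapt rather than reprove.
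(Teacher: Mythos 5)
Your proposal is correct, and while it lives in the same ``compare with a local competitor'' world as the paper, the implementation is genuinely different. The paper builds a tailored competitor for each case: for (i) it replaces the whole segment $[QX]$ by $[YX]$, where $Y=\partial B_r(y(Q))\cap[y(Q)X]$, and gets the strict length drop from the triangle inequality; for (ii) it works at scale $\varepsilon$, moves $W$ to the point $Y$ of the tangent line to $\partial B_r(y(W))$ at $W$ that equalizes the two angles (reflection principle, first-order gain) and restores coverage of $y(W)$ by appending a segment $[YV]$ of length $O(\varepsilon^2)$. You instead keep the exits fixed, displace only the junction, and read off stationarity as a cone-of-feasible-directions (KKT) condition; this treats (i) and (ii) uniformly, gives collinearity, respectively the bisector property, from the antiparallelism of the two gradients, and you correctly isolate the degenerate collinear subcase of (ii), where the conclusion is forced directly by $B_r(y(W))\cap\Sigma=\emptyset$. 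What your route buys is exactly the point the paper treats lightly: admissibility of the competitor. The paper's ``Clearly $M$ is still covered'' in (i), and the implicit coverage of nearby barely-covered points in (ii), are what your localization argument supplies -- points that risk becoming uncovered for radii $\rho_n\to0$ accumulate only at corresponding points of $x$, hence at $y(x)$ by uniqueness, so they see their nearest point of $\Sigma$ in a direction uniformly close to that of $y\to x$, and a strictly inward displacement of the junction keeps them covered; this is the mechanism of~\cite{cherkashin2018horseshoe}, and your sketch of it is sound (the needed uniform estimate along the whole displaced segments does go through). What the paper's route buys is brevity and the absence of any limiting argument in (i). One small caveat: phrasing the conclusion as KKT over all of $\overline{B_r(y)}$ presumes admissibility of boundary/tangential displacements, which you do not establish; but your closing argument uses only strictly inward, strictly length-decreasing directions, and their nonexistence already pins down the collinearity and bisector conditions, so the proof is complete as you finally state it.
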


\begin{proof}
Suppose the contrary to~(i), then $Y := \partial B_r(y(Q)) \cap [y(Q)X]$ differs from $Q$. Replace $[QX]$ with $[YX]$ in $\Sigma$. Clearly $M$ is still covered; by the triangle inequality $r  + |YX| = |y(Q)X| < |y(Q)Q| + |QX| = r + |QX|$, so the length of $\Sigma$ decreases with the replacement. Contradiction.

\begin{figure}[h]
    \centering
    \begin{tikzpicture}[scale=2.5]
            
            \def\qwe{2.923717048}
            \def\hh{2.2}
            \coordinate (Z1) at (-1, \hh);
            \coordinate (Z2) at (2.5, \hh);
            \coordinate (ZZ1) at (-.8276,1.82);
            \coordinate (ZZ2) at (1.5014,1.321);
            \coordinate (Y) at (0.522, 0);
        
            \def\area{(-\qwe+1, -.4369524410) rectangle (\qwe, \hh)}
            
            \clip \area;
            
            \draw (\qwe, -.4369524410) arc (73 : 107 : 10);
            \draw[dashed] (0,0) circle (2); 
            \draw[thick, blue] (Z1) -- (0, 0) -- (Z2);
            \draw (-\qwe, 0) -- (\qwe, 0) node[pos=0.95, above]{$l$};
            
            \def\rr{.03}
            \draw[thick, blue] (ZZ1) -- (Y) -- (ZZ2);
            \fill (Y) circle (\rr);
            \draw[shift={(Y)}] (0, -.05) node[below]{$Y$};
            \def\aaa{0.15}
            \draw[shift={(Y)}] (\aaa,0) arc (0 : 53 : \aaa);
            \draw[shift={(Y)}] (-\aaa,0) arc (180 : 127 : \aaa);
            
            \draw (Z1) node[below left]{$Z_1$};
            \draw (Z2) node[below right]{$Z_2$};
            \fill (ZZ1) circle(\rr);
            \draw[shift={(ZZ1)}] (-.05, -.1) node[below]{$Z'_1$};
            \fill (ZZ2) circle(\rr);
            \draw[shift={(ZZ2)}] (0, .1) node[above]{$Z'_2$};
            \fill (0,0) circle (\rr);
            \draw (0, -.05)  node[below]{$W$};
            
            \draw(-1.6, 1.2) node[below right]{$B_\varepsilon(W)$};
    \end{tikzpicture}
    \caption{ 
   Picture near the point $W$ in Lemma~\ref{lm:energetic_points_angles}, case (ii).}
    \label{energeticPICT}
\end{figure}
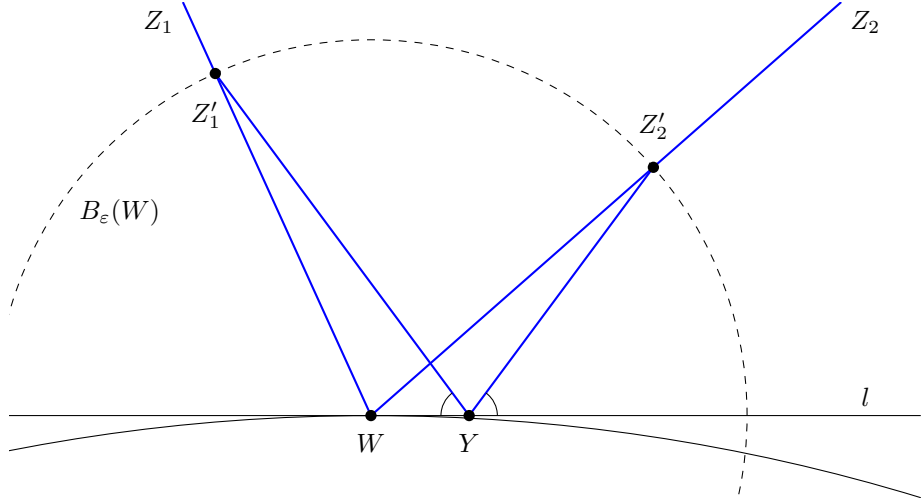

Suppose the contrary to~(ii) i.e. that $\angle Z_1Wy(W) \neq \angle y(W)WZ_2$. Let $l$ be the tangent line to $B_r(y(W))$
at the point $W$. Then 
\[
\H (\Sigma \cap B_{\varepsilon}(W)) - \H ([Z'_1Y ] \cup [Y Z'_2]) = O(\varepsilon),
\] 
where $Z'_1$ and $Z'_2$ are intersections of segments $Z_1Y$ and $Z_2Y$ with $\partial B_{\varepsilon}(W)$, and $Y$ is such a point in $l$ that $\angle Z'_1Yy(W) = \angle y(W)YZ'_2$ (see Fig.~\ref{energeticPICT}). On the other hand, $\dist (Y, \partial B_r(y(W))) = O(\varepsilon^2)$; let $V \in \partial B_r(y(W_1))$ be such a point that $\dist (Y, \partial B_r(y(W))) = \dist (Y, V)$. Then the set $(\Sigma \setminus B_\varepsilon (W)) \cup [Z_1Y ] \cup [Z_2Y ] \cup [YV]$ is connected, covers $M$ and has strictly lower length than $\Sigma$, giving a desired contradiction.
\end{proof}

In this section we use results from~\cite{cherkashin2020minimizers}. The facts stated below hold for a convex smooth curve $M$ with radius of curvature more than $r$. 
They are also applicable for a partially smooth $M$ when $y_1$ is a smooth point.

Consider a point $y_1 \in M$ such that $B_r(y_1) \cap \Sigma = \emptyset$. Suppose there exists an energetic point $x \in \partial B_r(y_1) \setminus M_r$. Denote the order of $x$ as $d\in \{1,2\}$, and the number of corresponding points $y(x)$ as $k \in \{1,2\}$ (denote the second such point as $y_2$, if it exists).

Fix a small enough $l$ so that $\overline{B_l(x)} \cap \Sigma$ is a union of $d$ segments of the form $[z_ix]$, $z_i \in \partial B_l(x)$, $1 \leq i \leq d$. Let $\varepsilon > 0$ be sufficiently small and denote by $y_1^\varepsilon$ the point obtained by moving $y_1$ along $M$ a distance $\varepsilon$ in such direction that $x \notin B_r(y_1^\varepsilon)$. Similarly, for a sufficiently small $\varepsilon < 0$ denote by $y_1^\varepsilon$ the point obtained by moving $y_1$ along $M$ a distance $-\varepsilon$ in the other direction. In the case $k=2$ denote $y_2^\varepsilon = y_2$.

Let
\[
\Gamma(\varepsilon) = \min_{x'} \sum_{i=1}^d |z_ix'|,
\]
there the minimum is over all points $x'$ which satisfy $|y_j^\varepsilon x'|=r$ for $1 \leq j \leq k$. Denote by $x_\varepsilon$ the point achieving the value of $\Gamma(\varepsilon)$.

Note that $x_0 = x$ because $\Sigma$ is a minimizer. By the derivative of length of $\Sigma$ in a neighborhood of $x$ when moving $y_1(x)$ along $M$ we mean the derivative of $\Gamma(\varepsilon)$ in zero $\Gamma'(0)$.

\begin{proposition}
Let $M$ be a convex smooth curve with radius of curvature more than $r$; $\Sigma$ be an arbitrary minimizer for $M$, $r$.
Let $x \in \Sigma$ be an energetic point, $y(x) \in M$ be an arbitrary corresponding point. 
Then the derivative of length of $\Sigma$ in a neighborhood of $x$ when moving $y$ along $M$ is nonnegative.
\label{nonnegative}
\end{proposition}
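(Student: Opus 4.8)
The plan is to argue by contradiction: if $\Gamma'(0) < 0$, then moving $y_1 = y(x)$ a small distance $\varepsilon$ along $M$ in the appropriate direction allows us to replace $\Sigma \cap \overline{B_l(x)}$ by the shorter configuration $\bigcup_{i=1}^d [z_i x_\varepsilon]$ while still covering $M$, contradicting the minimality of $\Sigma$. The delicate point is that moving $y_1$ changes which part of $M$ is covered, so we must be careful that the modified set still satisfies $F_M \le r$.

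First I would set up the competitor. Fix $\varepsilon$ of the correct sign (so that $x \notin B_r(y_1^\varepsilon)$, which is exactly the direction built into the definition of $y_1^\varepsilon$), and let $x_\varepsilon$ be the minimizer in the definition of $\Gamma(\varepsilon)$; by construction $|y_j^\varepsilon x_\varepsilon| = r$ for $1 \le j \le k$. Define
\[
\Sigma_\varepsilon := \bigl(\Sigma \setminus B_l(x)\bigr) \cup \bigcup_{i=1}^d [z_i x_\varepsilon].
\]
Since $z_i \in \partial B_l(x)$ and $x_\varepsilon \to x$ as $\varepsilon \to 0$ (continuity of $\Gamma$ and uniqueness of the optimal $x'$ at $\varepsilon = 0$, where $x_0 = x$), for $\varepsilon$ small enough $\Sigma_\varepsilon$ is connected: the only part of $\Sigma$ removed is the union of the $d$ arcs $]z_i x] \subset B_l(x)$, and we glue the points $z_i$ back to the common point $x_\varepsilon$, which is close to $x$. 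Its length satisfies $\H(\Sigma_\varepsilon) = \H(\Sigma) - \sum_i |z_i x| + \sum_i |z_i x_\varepsilon| = \H(\Sigma) - \Gamma(0) + \Gamma(\varepsilon) = \H(\Sigma) + \Gamma'(0)\varepsilon + o(\varepsilon)$.

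The main step is checking that $F_M(\Sigma_\varepsilon) \le r$, i.e. $M \subset \overline{B_r(\Sigma_\varepsilon)}$. The only region at risk is the part of $M$ that was covered exclusively by the removed arcs in $B_l(x)$; since those arcs are within distance $l$ of $x$, the at-risk part of $M$ lies within distance $r + l$ of $x$, and because $x$ is energetic with corresponding point(s) $y_1$ (and $y_2$ when $k=2$) at distance exactly $r$ with $B_r(y_j) \cap \Sigma = \emptyset$, for $l$ small the only points of $M$ that could lose their cover are those near $y_1$ (and near $y_2$). Here is where the hypotheses on $M$ enter: $M$ is a convex smooth curve with radius of curvature more than $r$, so near $y_1$ the curve $M$ is well-approximated by its tangent line and the ball $B_r(y_1)$ is "internally tangent" in a controlled way; the point $x_\varepsilon$ with $|y_1^\varepsilon x_\varepsilon| = r$ covers a neighborhood of $y_1^\varepsilon$ on $M$, and a standard second-order estimate (the same kind used in Lemma~\ref{lm:energetic_points_angles}) shows that moving $y_1$ by $\varepsilon$ and simultaneously adjusting the covering point to $x_\varepsilon$ loses coverage only of a set whose $\H^1$-measure and whose distance-deficit are $o(\varepsilon)$ — in fact the deficit can be repaired, if necessary, by appending a segment of length $O(\varepsilon^2)$ from $x_\varepsilon$ toward the worst uncovered point of $M$, exactly as in the proof of part (ii) of Lemma~\ref{lm:energetic_points_angles}. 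Combining: the repaired competitor $\Sigma_\varepsilon'$ is connected, covers $M$, and has length $\H(\Sigma) + \Gamma'(0)\varepsilon + o(\varepsilon)$. If $\Gamma'(0) < 0$, taking $\varepsilon > 0$ small makes this strictly less than $\H(\Sigma)$, contradicting minimality; hence $\Gamma'(0) \ge 0$.

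I expect the main obstacle to be the coverage estimate near $y_1$ (and $y_2$): one must show rigorously that the set of points of $M$ that become uncovered after the swap has distance-to-$\Sigma_\varepsilon$ exceeding $r$ by at most $O(\varepsilon^2)$, which uses both the curvature bound on $M$ and the precise definition of $x_\varepsilon$ as the constrained length-minimizer (so that it sits on $\partial B_r(y_1^\varepsilon)$, resp. $\partial B_r(y_1^\varepsilon) \cap \partial B_r(y_2^\varepsilon)$, on the correct side). The connectedness of $\Sigma_\varepsilon$ and the differentiability of $\Gamma$ at $0$ are comparatively routine, the latter being the content of the cited results from~\cite{cherkashin2020minimizers}. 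One should also note the degenerate case where $\Gamma$ is one-sided differentiable only, in which case the argument shows the relevant one-sided derivative (in the admissible direction of motion) is nonnegative, which is what the statement asserts.
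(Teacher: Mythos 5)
You should know at the outset that the paper contains no proof of Proposition~\ref{nonnegative}: it is imported, together with Proposition~\ref{diffproposition} and the derivative formulas, from~\cite{cherkashin2020minimizers} (``In this section we use results from\dots''), so there is no in-paper argument to compare with and your proposal must stand on its own. Its architecture --- assume $\Gamma'(0)<0$, replace $\Sigma\cap\overline{B_l(x)}$ by $\bigcup_i[z_ix_\varepsilon]$, gain $|\Gamma'(0)|\varepsilon+o(\varepsilon)$ in length, check that coverage can only fail near the corresponding points $y_1,y_2$, repair cheaply --- is the natural variational route and is almost certainly the shape of the cited proof; the length bookkeeping and the localization of the at-risk set to neighborhoods of the $y_j$ are fine.

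The genuine gap is the step you yourself flag and then dispose of by assertion: that the coverage deficit is $O(\varepsilon^2)$ and can be repaired by one segment ``toward the worst uncovered point, exactly as in Lemma~\ref{lm:energetic_points_angles}(ii)''. That lemma's $O(\varepsilon^2)$ estimate concerns a single point pushed off one fixed circle $\partial B_r(y(W))$; here the situation is different. Points of $M$ near $y_1$ or $y_2$ that were covered with margin only $O(\varepsilon)$ --- either by $x$ itself or by interior points $w\in\,]z_ix[$ of the removed segments --- are displaced relative to a configuration that moves by $|xx_\varepsilon|=\Theta(\varepsilon)$, so a priori their deficit is $\Theta(\varepsilon)$, not $O(\varepsilon^2)$, and a repair of cost comparable to $|\Gamma'(0)|\varepsilon$ destroys the contradiction. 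To make the scheme work one needs ingredients absent from the sketch: (1) the first-order displacement $v$ of $x_\varepsilon$ has nonpositive radial component with respect to $y_1$ (this is precisely where the choice ``$x\notin B_r(y_1^\varepsilon)$'' enters) and zero radial component with respect to $y_2$ (from the constraint $|x'y_2|=r$), which improves the deficit bound to $C(l+\delta(l))\varepsilon+O(\varepsilon^2)$, where $\delta(l)$ bounds the diameter of the at-risk set and tends to $0$ with $l$; (2) the order of quantifiers must then be fixed --- choose $l$ small in terms of $|\Gamma'(0)|$ before sending $\varepsilon\to0$, using that $\Gamma'(0)$ does not depend on $l$; (3) the repair itself is not a single stub of length equal to the deficit: such a stub covers only a cone of angular half-width about $\sqrt{2\,\mathrm{deficit}/r}$, while the uncovered set can be an arc of $M$ of much larger angular spread (for instance when $M$ meets $\partial B_r(x)$ near-tangentially, or when a stretch of $M$ is covered with vanishing margin by interior points of a removed segment --- the curvature hypothesis does not forbid this, since radius of curvature $>r$ permits flat arcs), so one must either bound the spread of the marginally covered set, use several patches with controlled total length, or show the problematic geometries already force $\Gamma'(0)\ge0$. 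In short: right strategy, but the quantitative coverage-and-repair estimate that constitutes the actual content of the proposition is missing, and the specific bounds you assert for it are stronger than what is true in general.
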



\begin{proposition}
Let $y \in M$ be a point such that $B_r(y) \cap \Sigma = \emptyset$ and $\partial{B_r(y)}$ contains energetic points $x_1$ and $x_2$.
Define $Y = \partial B_r(y) \cap M_r$. Then
\begin{itemize}
    \item [(i)] points $x_1$ and $x_2$ lie on opposite sides of the line $(yY)$;
    \item [(ii)] derivatives of length of $\Sigma$ in neighborhoods of $x_1$ and $x_2$ when moving $y$ along $M$ are equal.
\end{itemize}
\label{diffproposition}
\end{proposition}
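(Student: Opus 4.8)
The plan is to run the variational machinery of this section on the two energetic points $x_1,x_2$ simultaneously, exploiting that the single point $y$ is a corresponding point for both. Fix coordinates at $y$: since $M$ is convex with radius of curvature larger than $r$ and $Y=\partial B_r(y)\cap M_r$, the line $\ell:=(yY)$ is the inward normal to $M$ at $y$, and the unit tangent $\tau$ to $M$ at $y$ is orthogonal to $\ell$. For $x\in\partial B_r(y)$ and a displacement of $y$ by signed arclength $\varepsilon$ along $M$ in the direction $\tau$ one has $|x-y^\varepsilon|^2=r^2-2\varepsilon\langle x-y,\tau\rangle+O(\varepsilon^2)$, so moving $y$ along $\tau$ pushes $x$ out of $B_r(y)$ when $\langle x-y,\tau\rangle<0$ and pulls it inside when $\langle x-y,\tau\rangle>0$; these two cases are precisely the two sides of $\ell$. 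An energetic point lies in $\partial B_r(y)\setminus M_r$, hence not on $\ell$, so $\langle x_i-y,\tau\rangle\neq0$ for $i=1,2$; and since $\Sigma\cap B_r(y)=\emptyset$, the segment(s) of $\Sigma$ issuing from each $x_i$ point out of $B_r(y)$.

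The basic competitor is the following. Displace the common witness $y$ a small signed arclength $s$ along $M$ to $y_s$, and near each $x_i$ we choose to modify, delete the (at most two) segments of $\Sigma$ through $x_i$ and reconnect their far endpoints to the point $x_i(s)$ achieving the value of $\Gamma_i$ for the displaced constraint $|\,\cdot\,-y_s|=r$ (keeping any other corresponding point of $x_i$ fixed, exactly as in the definition of $\Gamma$ before Proposition~\ref{nonnegative}). The crux is that the modified set still covers $M$: the only coverage the surgery near $x_i$ can destroy is that of the short sub-arc of $M$ around $y$ for which $x_i$ was responsible, and a first-order estimate of covering margins recovers it — if $x_1,x_2$ lie on the same side of $\ell$, that sub-arc is already covered with strictly positive margin by the unmodified other energetic point; if $x_1,x_2$ lie on opposite sides and we modify near both, the two new pieces jointly cover a whole neighbourhood of $y$, one of $x_1(s),x_2(s)$ picking up the arc between $y$ and $y_s$, while the coverage of $M$ away from $y$ is untouched. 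Granting admissibility, the competitor has length $\H(\Sigma)+\big(\Gamma_1(\sigma_1 s)-\Gamma_1(0)\big)+\big(\Gamma_2(\sigma_2 s)-\Gamma_2(0)\big)+o(s)$, where $\sigma_i=\pm1$ records whether $s\tau$ is the push-out ($+$) or pull-in ($-$) direction for $x_i$, a term being dropped for an unmodified $x_i$.

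Now combine with Proposition~\ref{nonnegative} ($\Gamma_i'(0)\ge0$), strengthened to $\Gamma_i'(0)>0$: for a degree-$1$ energetic point Lemma~\ref{lm:energetic_points_angles}(i) gives $\Gamma_i(\varepsilon)=|z-y^\varepsilon|-r$ for the far endpoint $z$, whence $\Gamma_i'(0)=-\langle\widehat{x_i-y},v\rangle>0$ with $v$ the push-out direction ($\langle x_i-y,v\rangle<0$), strict because $x_i\notin\ell$; the degree-$2$ case is analogous via the envelope theorem and Lemma~\ref{lm:energetic_points_angles}(ii) (in the borderline sub-case of two collinear segments at $x_i$ one argues with the second-order term). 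For part~(i): were $x_1,x_2$ on the same side of $\ell$, choosing $s\tau$ to be the pull-in direction for both and modifying near the $x_i$ with larger $\Gamma_i'(0)$ produces a competitor of length $\H(\Sigma)-s\,\Gamma_i'(0)+o(s)<\H(\Sigma)$ for small $s>0$, contradicting minimality of $\Sigma$; hence $x_1,x_2$ lie on opposite sides of $(yY)$. For part~(ii), with $x_1,x_2$ on opposite sides one has $\sigma_1=-\sigma_2$, so modifying near both gives a competitor of length $\H(\Sigma)+s\big(\Gamma_1'(0)-\Gamma_2'(0)\big)+o(s)$, and minimality as $s\to0^+$ and as $s\to0^-$ (both perturbations admissible) forces $\Gamma_1'(0)=\Gamma_2'(0)$ — i.e.\ the derivatives of the length of $\Sigma$ near $x_1$ and near $x_2$ under motion of $y$ coincide.

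The step I expect to be the main obstacle is the admissibility of the competitors: checking, uniformly for small $|s|$, that after the surgery every point of $M$ near $y$ stays within distance $r$ of the modified set. This amounts to quantifying how the covering margins of $x_i$ and of the adjacent arcs of $\Sigma$ vary linearly in arclength along $M$ near $y$, and verifying that the margin supplied by the ``other'' piece dominates the deficit produced by the surgery — essentially the estimate behind Proposition~\ref{nonnegative}, which is why that proposition is the right tool to cite. The remaining ingredients — the sign bookkeeping through $\sigma_i$ and the strict positivity $\Gamma_i'(0)>0$ — are routine.
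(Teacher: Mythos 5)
First, a point of comparison: this paper does not actually prove Proposition~\ref{diffproposition}. Section~\ref{diff} imports it, together with Proposition~\ref{nonnegative} and the derivative formulas, from~\cite{cherkashin2020minimizers}, so there is no internal proof to measure your argument against; what can be judged is whether your sketch would stand on its own. Its general shape — perturb the witness $y$ along $M$, reconnect the spider at each $x_i$ to the constrained minimizer defining $\Gamma_i$, and play the two signs of the perturbation against minimality of $\Sigma$ — is indeed the kind of variational argument used in the cited work, and your sign bookkeeping for part~(ii) (push-out for one point is pull-in for the other, forcing $\Gamma_1'(0)=\Gamma_2'(0)$) is the right mechanism.

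However, as a proof there are two genuine gaps. The first is the one you flag yourself: admissibility of the competitors. This is not a side condition that ``essentially follows from Proposition~\ref{nonnegative}'' — that proposition is a statement about the sign of $\Gamma'(0)$, not a covering lemma, and it is likewise only quoted here. Moreover the specific claims you use to dismiss the issue are not correct as stated: in the same-side scenario the ``unmodified other energetic point'' covers $y$ with \emph{zero} margin ($|x_2y|=r$ exactly), not strictly positive margin; and coverage can also be lost at the far end of the arc $\gamma(x_1)$ and at a possible second corresponding point of $x_1$ — this is precisely why the definition of $\Gamma$ carries the constraint $|y_j^\varepsilon x'|=r$ for \emph{all} corresponding points, and controlling the zero-margin points away from $y$ is the real content of the omitted proof. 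The second gap is the strictness $\Gamma_i'(0)>0$ needed for part~(i). By the paper's own formula, a degree-2 energetic point with collinear adjacent segments has derivative $2\cos\alpha\cos\frac{\angle z_1xz_2}{2}=0$ regardless of $\alpha$, and your computation does not cover the case $k=2$ (two corresponding points) at all. In these sub-cases the first-order contradiction for part~(i) disappears, and ``argue with the second-order term'' does not obviously rescue it: at second order the reconnection cost in $\Gamma$ is typically nonnegative, so the competitor is not shorter and no contradiction is produced. As it stands, part~(i) is therefore not established for a same-side pair of such degenerate points, and part~(ii) rests on an unproved coverage claim; both would need the detailed analysis that the paper delegates to~\cite{cherkashin2020minimizers}.
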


Also in~\cite{cherkashin2020minimizers} the derivative of length of $\Sigma$ in a neighborhood of $x$ when moving $y$ along $M$ is calculated.
The derivative depends on the behavior of $\Sigma$ in the neighborhood of $x$. In the present paper we need the following cases.

\paragraph{1. $x$ has order 1, and there is unique corresponding $y(x)$.} Then the derivative is equal to
\[
\cos\alpha,
\]
where $\alpha = \angle ([xy(x)) , l)$, $l$ is a tangent ray to $M$ at point $y(x)$, in the direction of increasing $\gamma (x)$.

\paragraph{2. $x$ has order 2, and the unique corresponding $y(x)$.} Since $x$ has order 2, $B_\varepsilon (x) \cap \Sigma = [xz_1]\cup [xz_2]$ for small enough $\varepsilon > 0$. Then the derivative is equal to
\[
2\cos\alpha\cos \frac{\angle z_1xz_2}{2},
\]
where $\alpha = \angle ([xy(x)) , l)$, $l$ is a tangent ray to $M$ at point $y(x)$, in the direction of increasing $\gamma (x)$.

\section{The behavior of \texorpdfstring{$\Sigma$}{Sigma} near the angles of \texorpdfstring{$M$}{M}}
\label{sect:nearangles}

The major part of lemmas in this section require $M$ to be a polygon $A_1\dots A_n$ with angles at least $\pi/3$. 
Some lemmas use stronger restrictions on $M$.

\subsection{Enclosing of a minimizer} 
Let $C_i(t^-_i,t^+_i)$ be a quadrangle, which is bounded by perpendiculars to the sides of $M$, which are incident to $A_i$ and at a distance of $(p_M(i)+2+t^\pm_i)r$ from $A_i$ (where $t^\pm_i\in[0;2]$ will be chosen during the proof for every $A_i$ and for each sides incident to it). Let us define heptagon $\alpha_i(t^-_i,t^+_i)$ as follows: 
\[
\alpha_i(t^-_i,t^+_i) \defeq \overline{C_i(t^-_i,t^+_i) \setminus P_r},\quad i=1 \ldots n.
\]
Note that in light of Corollary~\ref{CorKar} it is true that $\Sigma \cap \partial \alpha_i(t^-_i,t^+_i) \subset \partial C_i(t^-_i,t^+_i) \setminus P_r$. 
We show that for some choice of parameters $t^\pm_i$ this intersection consists of exactly two points.

\begin{lemma}
There exists such $(t_1^\pm,t^\pm_2, \ldots, t^\pm_n)$ that $\Sigma$ intersects a boundary of each heptagon $\alpha_i(t^-_i,t^+_i)$ exactly at two points.
\label{twopoints}
\end{lemma}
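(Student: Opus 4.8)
The plan is to work on each vertex $A_i$ separately and to exhibit, for that vertex, a choice of the two parameters $t_i^-,t_i^+\in[0,2]$ making $\Sigma\cap\partial\alpha_i(t_i^-,t_i^+)$ a two‑point set; since the heptagons $\alpha_i$ are pairwise disjoint for $r$ small (they sit in disjoint neighbourhoods of the $A_i$, by Corollary~\ref{CorKar} together with the fact that $P_r$ separates them), these choices can be made independently. Fix $i$ and recall that by Corollary~\ref{CorKar} we have $\Sigma\cap P_r=\emptyset$, so $\Sigma\cap\partial\alpha_i$ in fact lies on the two ``outer'' sides of the quadrangle $C_i(t_i^-,t_i^+)$ — the two perpendiculars to the sides of $M$ through $A_i$, located at signed distances $(p_M(i)+2+t_i^\pm)r$ from $A_i$. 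Call these two segments $e_i^-(t_i^-)$ and $e_i^+(t_i^+)$; as $t_i^\pm$ ranges over $[0,2]$ each of them sweeps a thin strip $R_i^\pm$ of width $2r$ just outside $P_r$.

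The first key step is a connectedness/covering statement: I claim that for every admissible parameter value, $\Sigma\cap\partial\alpha_i$ is \emph{nonempty} and in fact $\Sigma$ must cross $e_i^-$ and $e_i^+$ at least once each. Indeed $\Sigma$ is connected and covers $M$; the portion of $\partial M$ near $A_i$ that is covered from ``inside'' the heptagon forces a branch of $\Sigma$ to reach into the corner region, while the rest of $M$ forces a branch to stay outside $P_r$, so $\Sigma$ must pass through each of the two outgoing strips. The second key step — the heart of the argument — is to show that one can \emph{choose} $t_i^\pm$ so that each crossing is transversal and unique. For this I use a pigeonhole/measure argument on the sweep: consider the function $t\mapsto\#(\Sigma\cap e_i^\pm(t))$. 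By Lemma~\ref{ShortSegments} the component of $\Sigma$ inside $\Int N_r$ near $A_i$ consists of finitely many segments of length $\le 2r$, and by Lemma~\ref{KarKar} there are no branching points of $\Sigma$ in $\Int K_r\supset$ (most of) this region; hence the intersection of $\Sigma$ with the strip $R_i^\pm$ is a finite union of segments, and only finitely many parameter values $t$ are ``bad'' (i.e. $e_i^\pm(t)$ passes through an endpoint of one of these segments, or is parallel to one of them, or meets $\partial P_r$). For every other $t$ the intersection $\Sigma\cap e_i^\pm(t)$ is a finite set whose cardinality is locally constant in $t$; combined with the absence of branching points and segments longer than $2r$, a segment of $\Sigma$ entering the strip and exiting it on the far side $\partial P_r$ would have to be short and essentially radial, and I argue that a generic perpendicular $e_i^\pm(t)$ then meets it exactly once. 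Taking $t_i^\pm$ to be any good value for which the count is the minimal one — which is $1$ — gives the claim.

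The main obstacle is ruling out the count being $\ge 2$ for \emph{all} admissible $t$, i.e. showing the minimal count is exactly $1$ rather than some larger odd number. This is where I would invoke the structural results more forcefully: since $\Sigma$ has no branching points in $\Int K_r$ and no segments longer than $2r$ there, any two crossings of $e_i^\pm(t)$ by $\Sigma$ at the same parameter would have to be joined by a path in $\Sigma$ that either re‑enters $P_r$ (impossible by Corollary~\ref{CorKar}) or makes a ``U‑turn'' within the thin strip, forcing a turn of more than the $2\pi/3$ permitted by the angle condition (Theorem of Teplitskaya, part~(ii)) — here the fact that the strip has width only $2r$ while excess turning would need room proportional to the segment lengths is what produces the contradiction, exactly as in the analogous lemma of~\cite{cherkashin2018horseshoe}. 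Carrying this estimate out carefully for a polygonal corner, rather than a smooth curve, is the one genuinely new computation; the angle $\angle A_i\ge\pi/3$ is used precisely to guarantee that the two strips $R_i^+,R_i^-$ meet $\Sigma$ in ``independent'' pieces so the two choices $t_i^+,t_i^-$ do not interfere. Finally, intersecting the good sets of parameters over $i=1,\dots,n$ and picking one tuple $(t_1^\pm,\dots,t_n^\pm)$ in the (nonempty) intersection completes the proof.
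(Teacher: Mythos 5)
Your overall scheme (sweep the perpendicular ``gate'' through the parameter range $t\in[0,2]$ and pick a good value for each side of each corner, independently over $i$) matches the paper's, and your first step (each gate meets $\Sigma$ at least once, by connectedness, covering and Corollary~\ref{CorKar}) is fine. But the crucial step — ruling out that \emph{every} admissible $t$ gives at least two intersection points — is where your argument breaks down, and it is exactly the step the paper handles by a different mechanism. Your claim is that two crossings of a gate at the same parameter would be joined by a path in $\Sigma$ that either re-enters $P_r$ or makes a ``U-turn'' inside the thin strip, violating the $2\pi/3$ angle condition. That dichotomy is false: the path in $\Sigma$ joining the two crossing points merely has to avoid $P_r$; it can leave the strip sideways, running parallel to the side of $M$ inside the width-$r$ band $\overline{N\setminus N_r}$ (or detouring near the corner) with no sharp turning at all. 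Note also that the structural facts you invoke do not apply where the crossings live: the gate joins $M$ to $\partial P_r$, so the intersection points lie in $\overline{N\setminus N_r}$, whereas Lemma~\ref{ShortSegments}(ii) bounds only segments in $\Sigma\cap N_r$ and Lemma~\ref{KarKar} excludes branching only in $\Int K_r$. A configuration with two long nearly parallel strands at heights, say, $0.3r$ and $0.8r$ above the side, connected far away from the corner, crosses every gate twice, satisfies all the local regularity and angle constraints, and avoids $P_r$; it can only be excluded by using the \emph{minimality} of $\Sigma$, which your argument for this step never does. For the same reason, taking ``any good value for which the count is the minimal one --- which is $1$'' assumes the conclusion.

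The paper's proof closes this gap with a length (co-area) estimate plus an explicit competitor: if for every $t\in[0,2]$ the gate at distance $(p_M(i)+2+t)r$ met $\Sigma$ in at least two points, then integrating the slice count over the sweep gives that $\Sigma$ has length at least $4r$ inside the $2r\times r$ sub-band of $N\setminus N_r$ between the perpendiculars at distances $(p_M(i)+2)r$ and $(p_M(i)+4)r$; replacing that part of $\Sigma$ by three sides of this rectangle (the two perpendicular segments of length $r$ and a segment of length $2r$ contained in $M_r$) preserves connectivity and the covering property while decreasing length, contradicting optimality. This gives some $t$ with exactly one crossing, and the same argument applied with the roles of $t_i^-$ and $t_i^+$ exchanged finishes the lemma. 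If you want to salvage your write-up, you should replace the ``U-turn'' paragraph by such a co-area-plus-replacement argument (or some other use of minimality); the purely local structural results cannot do the job.
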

\begin{proof}
Clearly, in view of connectivity of $\Sigma$ and Corollary~\ref{CorKar} for arbitrary $t^-,t^+$ there is at most one point $x\in M\setminus (\bigcup_{i=1}^{n} \alpha_i(t^-,t^+))$ such that $B_r(x)\cap \Sigma = \emptyset$. Then energetic terminal points of the set $\Sigma \setminus \overline{B_r(\bigcup_{i=1}^{n} \alpha_i(t^-,t^+))}$ belong to the circumference $\partial B_r(x)$ if such point $x$ exists, and don't exist otherwise.

Then each connected component of $\Sigma \setminus \bigcup_{i=1}^{n} \alpha_i(t^-_i,t^+_i)$ is a minimal network, contained in $N \setminus N_r$ and connecting points from sets $\partial B_r(x) \cap N$ and $\overline{\partial \alpha_i(t^-_i,t^+_i) \setminus (N_r \cup M)}$ for some $i$. Moreover each connected component connects points from exactly two sets, and there are no two components connecting the same pair of sets. 

It is easy to see that if there is $ i $ such that $ \sharp (\partial C_i (t^-_i,t) \setminus M \cap \Sigma) \geq 2 $ for any $ t \in [0; 2] $, then in view of the co-area inequality, the length of $ \Sigma $ on the part of the rectangle $ N \setminus N_r $ between two perpendiculars drawn at the distance $ (p_M (i) +2) r $ and $ (p_M(i) +4) r $ from $ A_i $ is $ 4r $. Then the replacement of the indicated subset of $\Sigma$ with three sides of this rectangle (two perpendiculars to $ M $ and a side that is contained in $ M_r $) decreases the length and does not increase the energy, which contradicts the optimality of $ \Sigma $. Analogously we can fix $t^+_i$ and variate $t^-_i$. Thus, for each $i$ there are such $ t^\pm_i \in [0; 2] $ that each of perpendiculars drawn at the distance $ (p_M (i) + 2 + t^\pm_i) r $ from vertex $ A_i $ intersects $ \Sigma $ at exactly one point. In what follows, such $ C_i (t^-_i,t^+_i) $ will be called $ C_i $, and $ \alpha_i(t^-_i,t^+_j) $, accordingly, will be called $ \alpha_i$.
\end{proof}

Thus $\Sigma \cap \alpha_i$ consists of one or two connected components. As $\Sigma$ is connected, there are at least $n-1$ heptagons $\alpha_i$ such that $\Sigma \cap \alpha_i$ is connected. We want to show that all $n$ heptagons are like this. 

Note that inside the connected components of $N \setminus \bigcup_{i=1}^{n} \alpha_i$, set $ \Sigma $ has two variants of behavior: either it is a segment connecting sides of $\alpha_i$, or (there is at most one such component) it is a union of two segments, the distance between which is $(2-o_r (1)) r$.

\begin{proposition}
Let segment $[xa]\subset \Sigma$ be such that $|za|<r$, where $z \defeq [xa)\cap M$. And let $\gamma([ax]) \cap H \subset \overline {B_r( \Sigma \setminus [ax[)}$, where $H$ is one of two half-planes bounded by $(za)$. Then point $a$ can not be terminal.
\label{hs}
\end{proposition}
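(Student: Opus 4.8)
The plan is to argue by contradiction, perturbing the pendant segment. Suppose $a$ is terminal, so that near $a$ the set $\Sigma$ is exactly the segment $[xa]$. Let $u$ be the unit vector from $x$ to $a$ (so $x$, $a$, $z$ lie on one line in direction $u$), and let $w$ be the unit normal to $(za)$ pointing into the open half-plane $H'$ complementary to the half-plane $H$ of the hypothesis. For a small fixed $c>0$ and small $\varepsilon>0$ put
\[
a_\varepsilon\defeq a+\varepsilon w-c\varepsilon u,\qquad
\Sigma_\varepsilon\defeq\bigl(\Sigma\setminus\,]xa]\bigr)\cup[xa_\varepsilon].
\]
Since $a$ is a leaf of the loop-free set $\Sigma$ and the vertex $x$ is kept, $\Sigma_\varepsilon$ is closed and connected; and since $|xa_\varepsilon|^2=(|xa|-c\varepsilon)^2+\varepsilon^2$ we get $\H(\Sigma_\varepsilon)=\H(\Sigma)-c\varepsilon+O(\varepsilon^2)<\H(\Sigma)$ for $\varepsilon$ small. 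So it is enough to show that for a suitable $c$ and all small $\varepsilon$ the set $\Sigma_\varepsilon$ still covers $M$, which contradicts the minimality of $\Sigma$.

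The coverage check has three parts. A point of $M$ lying outside $\gamma([ax])$ has its nearest point of $\Sigma$ in $\Sigma\setminus[xa]\subset\Sigma_\varepsilon$, so it remains covered. A point of $\gamma([ax])\cap H$ lies, by the hypothesis, in $\overline{B_r(\Sigma\setminus[ax[)}\subset\overline{B_r(\Sigma_\varepsilon)}$; this is the only place the hypothesis enters, and it disposes of the whole $H$-side for free. It therefore remains to handle $p\in\gamma([ax])\cap H'$. If the nearest point of $[xa]$ to $p$ is interior, then $p$ lies alongside the segment on its $H'$ side, and the perturbation (which rotates $[xa]$ about $x$ by the angle $\approx\varepsilon/|xa|$ toward $H'$) moves the segment toward $p$, so $\dist(p,[xa_\varepsilon])\le r$. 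If the nearest point of $[xa]$ to $p$ is the endpoint $a$, then $|pa|\le r$, i.e. $p\in\overline{B_r(a)}$; since $z=[xa)\cap M$ satisfies $|za|<r$, for small $r$ this confines $p$ to a straight side of $M$ near $z$, and a direct expansion gives
\[
|pa_\varepsilon|^2=|pa|^2+2\varepsilon\bigl(c\langle p-a,u\rangle-\langle p-a,w\rangle\bigr)+O(\varepsilon^2).
\]
For $p$ with $|pa|<r$ the $O(\varepsilon)$ term is controlled by the fixed gap $r^2-|pa|^2>0$; for the boundary points $|pa|=r$ one has $\langle p-a,w\rangle>0$ since they lie strictly in $H'$, so a small enough $c>0$ — available precisely because $|za|<r$ leaves a positive margin in the coverage of $M$ near $z$ — keeps $|pa_\varepsilon|\le r$. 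Hence $\Sigma_\varepsilon$ covers $M$.

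The one substantive point is this last estimate near $z$: one must describe $M\cap\overline{B_r(a)}$ (for small $r$ it is a sub-arc of one side of the polygon, though if $a$ is within $r$ of a vertex of $M$ two sides may intervene and need separate bookkeeping) and check that the pull-back constant $c$ can be fixed uniformly so that $\Sigma_\varepsilon$ is at once admissible and strictly shorter. Everything else is routine. One may alternatively organize the argument near $z$ using that a terminal point of a minimizer is energetic and that, by Lemma~\ref{lm:energetic_points_angles}(i), its corresponding point cannot then be unique and must sit on the $H'$-side of $(za)$; but the perturbation estimate above is the substance either way.
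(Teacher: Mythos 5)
Your overall strategy --- push the free endpoint $a$ off the line $(za)$ into the half-plane complementary to $H$ (your $H'$), use the hypothesis to dispose of the $H$-side of $\gamma([ax])$, and contradict minimality by a strictly shorter admissible competitor --- is the same idea as the paper's. The paper implements it as a single discrete replacement: it chooses $z'\in M\setminus H$ with $[zz']\subset B_r(a)\cap M$, sets $a'\defeq \partial B_{|z'a|}(z')\cap[xz']$, gets $|xa'|<|xa|$ from the triangle inequality in $\triangle z'ax$, and proves the clean containment $B_r(a)\cap M\setminus H\subset B_r(a')$ by locating the relevant points on the $a'$-side of the perpendicular bisector of $[aa']$. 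That discrete move requires no uniformity in a small parameter; your infinitesimal version does, and that is exactly where your write-up stops short.

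The unproved step is the uniform choice of $c$. For a point $p\in M\cap H'$ beyond $a$ (i.e. $\langle p-a,u\rangle>0$) with $|pa|$ close to $r$ and with the angle between $p-a$ and $u$ small, the first-order term $2\varepsilon\bigl(c\langle p-a,u\rangle-\langle p-a,w\rangle\bigr)$ is positive for every fixed $c>0$, and the ``fixed gap'' $r^2-|pa|^2$ you invoke is not fixed: it degenerates as $|pa|\to r$. Pointwise positivity of $\langle p-a,w\rangle$ at the boundary points $|pa|=r$ does not yield one admissible $c$ unless you show that the infimum of $\langle p-a,w\rangle/\langle p-a,u\rangle$ over all such points is positive, i.e. that $M\cap H'$ cannot approach the ray $[az)$ at distance about $r$ from $a$ (think of the side of $M$ through $z$ almost parallel to $[xa]$ and tilted into $H'$: then for any fixed $c$ there are points at distance $r$ inside your failure cone). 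The needed input is available --- since $\Sigma\subset N=\conv M$ and the ray $[xa)$ exits $N$ at $z$ with $|za|<r$, the point $a+ru$ lies outside $N$ and hence off $M$, so by compactness the points of $M\cap H'$ with $|pa|$ near $r$ keep a positive angular margin from the ray, which fixes $c$ (with the separate bookkeeping near a vertex that you mention) --- but you assert this rather than prove it, and you yourself flag it as ``the one substantive point'' still to be checked. So, as written, the proof has a gap precisely at its decisive estimate; either supply the convexity-plus-compactness argument above, or switch to the paper's discrete replacement $a\mapsto a'$, which bypasses the uniformity question entirely.
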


\begin{figure}
    \centering
    \begin{tikzpicture}[scale=1.4]
        \coordinate (C1) at (-2, 0);
        \coordinate (C2) at (2, 0);
        \coordinate (Z) at (0, 0);
        \coordinate (A) at (0.39363, 0.7888);
        \coordinate (X) at (1, 2);
        \coordinate (Z1) at (-0.75169, 0.00149);
        \coordinate (A1) at (0.16441, 1.04667);
        \coordinate (I) at (2.27019, 0.98315);
        
        \draw[very thick] (C1) -- (C2);
        \draw[thick, blue] (X) -- (A);
        \draw[blue, dashed] (A) -- (Z);
        
        \draw[shift={(A)}] (1.63383, -0.943295) arc (-30 : 210 : 1.88659);
        \draw[shift={(Z1)}] (1.38983, 0) arc (0: 140 : 1.38983);
        \draw[blue, dashed] (A1) -- (Z1);
        \draw[blue, thick, dashed, dash pattern=on 3pt off 2pt] (A1) -- (X);
        \draw[dashed] (Z1) -- (A);
        
        \draw[dotted] (A) -- (I) node[pos=0.5, below]{$r$};
        
        \def\nodesize{0.04}
        
        \fill[blue] (A) circle (\nodesize); 
        \draw[shift={(A)}] (0, -0.1) node[below]{$a$};
        \fill[blue] (A1) circle (\nodesize); 
        \draw[shift={(A1)}] (-0.1, 0) node[left]{$a'$};
        \fill (Z) circle (\nodesize) node[below]{$z$};
        \fill (Z1) circle (\nodesize) node[below]{$z'$};
        \draw (X) node[right]{$x$};
        
    \end{tikzpicture} 
    \caption{To Proposition~\ref{hs}}
    \label{fig:proposition42}
\end{figure}

\begin{proof}
 Let $z'\in M \setminus H$ be such a point that $[zz'] \subset B_r(a)\cap M$. And let $a' \defeq \partial B_{|z'a|}(z') \cap [xz']$.
 Then, in view of the triangle inequality for $ \triangle z'ax $ inequality $ | xa '| <| xa | $ holds. In addition, since the intersection $ B_r (a) \cap M \setminus H $ outside $ [zz '] $ is on the such side of the perpendicular to $ [aa'] $ where the points are closer to $ a '$ than to $ a $, $ B_r (a) \cap M \setminus H \subset B_r (a ') \cap M \setminus H $ executes. Thus set $ \Sigma \setminus [ax] \cup [a'x] $ is shorter and has not greater energy than $ \Sigma $. If point $ a $ has degree $ 1 $, then set $ \Sigma \setminus [ax] \cup [a'x] $ is connected, which contradicts the optimality of $ \Sigma $.
 \end{proof}

\begin{lemma}
For every heptagon $\alpha_i$ set $\Sigma \cap \alpha_i$ is connected.
\label{hujugl}
\end{lemma}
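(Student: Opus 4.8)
My plan is to show that the only way connectedness can fail -- namely $\Sigma\cap\alpha_i$ splitting into two pieces -- forces $\Sigma$ to contain a detour running essentially all the way around $M$, and that such a configuration can be strictly shortened by replacing the two pieces near $A_i$ with a single connected local covering; this contradicts optimality. To set up: by Lemma~\ref{twopoints} the set $\Sigma\cap\partial\alpha_i$ consists of exactly two points $q_1,q_2$, and $\Sigma$ is a topological tree (finite length, no loops). A connected component of $\Sigma\cap\alpha_i$ missing $\partial\alpha_i$ would be an entire component of the connected set $\Sigma$, which is impossible; hence $\Sigma\cap\alpha_i$ has at most two components, and if it has two then each carries exactly one of $q_1,q_2$, while the $q_1$--$q_2$ path of $\Sigma$ cannot stay inside $\alpha_i$ (it would otherwise join the two components there). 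So, assuming for contradiction $\Sigma\cap\alpha_i=\Sigma_1\sqcup\Sigma_2$ with $q_j\in\Sigma_j$, each $\Sigma_j$ is a nondegenerate subtree dangling into $\Int\alpha_i$ and attached to the rest of $\Sigma$ only at $q_j$. By optimality neither $\Sigma_j$ can be redundant (otherwise delete it, keeping $\Sigma$ connected and covering), so each $\Sigma_j$ contains an energetic leaf and exclusively covers a positive-length arc $E_j\subset M$; moreover $E_1\cap E_2=\emptyset$, both $E_j$ lie in the short arc $J_0:=M\setminus\overline{B_r(\Sigma\setminus\alpha_i)}$ about $A_i$, and in fact $J_0=\gamma(\Sigma_1)\cup\gamma(\Sigma_2)$ is a connected arc, so $\gamma(\Sigma_1)$ and $\gamma(\Sigma_2)$ must overlap.

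\textbf{The detour is long.} Next I would invoke Corollary~\ref{CorKar}: since $\Sigma\cap P_r=\emptyset$, the set $\Sigma\setminus\bigcup_j\alpha_j$ lies in the thin frame $N\setminus(P_r\cup\bigcup_j\Int\alpha_j)$, whose connected pieces -- the ``side strips'' along the sides of $M$ -- communicate only through the corner heptagons. Therefore the $q_1$--$q_2$ path in $\Sigma$, which is forced out of $\alpha_i$ through its two perpendicular ``mouths'' and cannot cross $P_r$, can reach $q_2$ from $q_1$ only by traversing every side strip and every other corner heptagon, i.e. once around $M$; its length is at least $Per-O(r)$. In particular $\Sigma_1\cup\Sigma_2$ is contained in an $O(r)$-neighbourhood of $A_i$.

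\textbf{Local shortening.} Finally I would delete $\Sigma_1\cup\Sigma_2$ and insert the optimal connected set $\tau^*\subset\alpha_i$ that covers $J_0$ and is attached to the detour at $q_1$; this is a Steiner-type tree at the convex corner $A_i$, with segments of length $\le 2r$ (Lemma~\ref{ShortSegments}) and branching and turning angles $\ge 2\pi/3$. Using the angle hypothesis $\angle A_i\ge\pi/3$ (here $\pi/2$), and controlling the shapes and directions of the energetic leaves of $\Sigma_1,\Sigma_2$ via Proposition~\ref{nonnegative}, the derivative formulas of Section~\ref{diff}, and Proposition~\ref{hs} (to exclude degenerate terminal configurations), I would show $\H(\tau^*)<\H(\Sigma_1)+\H(\Sigma_2)$: covering the connected arc $J_0$ with one tree is strictly cheaper than with the disjoint pair $\Sigma_1\sqcup\Sigma_2$, whose coverages overlap and whose two separate attachments duplicate infrastructure. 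This yields a connected competitor strictly shorter than $\Sigma$, a contradiction, so $\Sigma\cap\alpha_i$ is connected.

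\textbf{Main obstacle.} The hard part is precisely the last quantitative inequality $\H(\tau^*)<\H(\Sigma_1)+\H(\Sigma_2)$. For very sharp angles $\angle A_i$ the heptagon $\alpha_i$ (and hence $J_0$) is large relative to $r$ and the comparison is comfortable, but for the rectangle's right angle the constant $p_M(i)$ is only moderate, so one must argue carefully -- exploiting the $\le 2r$ bound on segments, the $\ge 2\pi/3$ angle conditions, and the explicit geometry of the short bridge joining $\Sigma_1$ to $\Sigma_2$ -- to be sure that no disconnected local covering can match a connected one. Everything else (Steps 1--2) is soft topology and a coarse length estimate; the real work is in this local optimization near the corner.
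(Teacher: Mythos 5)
Your setup (two components, each a nondegenerate subtree attached to the rest of $\Sigma$ at exactly one of the two boundary points, with the $q_1$--$q_2$ path forced the long way around $M$) is sound and consistent with the paper, but the proof stands or falls on the final claim $\H(\tau^*)<\H(\Sigma_1)+\H(\Sigma_2)$, and you do not prove it -- you state that you ``would show'' it using the angle conditions, Proposition~\ref{nonnegative}, the derivative formulas and Proposition~\ref{hs}, and you yourself flag it as the real work. This is precisely the gap, and it is not a routine one: it is doubtful that the inequality can be obtained by the soft means you list, and it is not even clear it holds as stated. The disconnected pair $\Sigma_1\sqcup\Sigma_2$ also covers all of $J_0$ (the empty ball $B_r(y)$ still has $y$ at distance exactly $r$ from both pieces), while it is permitted to leave a gap of width $(2-o_r(1))r$ in its ``footprint'' along the side; a connected competitor $\tau^*$ anchored only at $q_1$ must physically span that gap in addition to reaching everything $\Sigma_1$ and $\Sigma_2$ reach, so a crude length count suggests $\tau^*$ could well be \emph{longer}, not shorter. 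The only cheap global exchange available here -- add a $\approx 2r$ bridge joining $\Sigma_1$ to $\Sigma_2$ and then cut open one of the straight strip segments elsewhere, saving $\approx 2r$ -- is net zero to first order, which is exactly why the paper can extract from it only the rigidity statement $\dist(\Sigma_1,\Sigma_2)=(2-o_r(1))r$, not a contradiction.

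The paper's proof therefore proceeds quite differently after your Step 1: it pins down the structure ($\Sigma_1$ is a single segment $[z_1z_2]$ aimed at the uncovered point $y$ via Lemma~\ref{lm:energetic_points_angles}(i), all points of $(\Sigma_1\cup\Sigma_2)\cap\partial B_r(y)$ lie $o_r(1)r$-close to the side) and then runs a case analysis on the energetic point $x\in\Sigma_2\cap\partial B_r(y)$ according to its order ($1$ or $2$) and whether its corresponding point is unique. Each case is killed by first-order optimality information rather than a global length comparison: angle bounds at energetic points, the law of sines, Proposition~\ref{hs} used iteratively to produce an impossible infinite sequence of branching/degree-$2$ points trapped in a thin region, and in one case an explicit $\varepsilon$-scale local shortening. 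If you want to salvage your route, you would have to either prove your comparison inequality quantitatively (taking into account that $\Sigma_1,\Sigma_2$ satisfy the optimality conditions, since the inequality is false for arbitrary covering pairs of that combinatorial type) or fall back on the paper's local case analysis; as written, the argument is incomplete at its decisive step.
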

\begin{proof}
Assume the contrary, i.e. there exists such $i$, that \[\Sigma \cap \alpha_i=: \Sigma_1 \sqcup \Sigma_2.\]
Then $\dist(\Sigma_1,\Sigma_2)= (2-o_r(1))r$, otherwise the addition to $\Sigma$ of the shortest segment connecting $\Sigma_1$ with $\Sigma_2$ and replacement of an arbitrary connected component of $\Sigma \setminus \bigcup_{i=1}^{n} \alpha_i$ (each of them is a segment) with the union of two segments with the distance equal to $(2-o_r(1))r$ between them, decreases the length, maintains the connectivity and doesn't increase the energy, which contradicts the optimality of $\Sigma$. 
At least one of connected components (without loss of generality, $\Sigma_1$) covers the points belonging to only one side (we will call this side \emph{the first}).
Clearly $\Sigma_1$ is a segment: let $z_1\defeq \partial \alpha_i \cap \Sigma_1$ and let $z_2$ be such a point of $\Sigma_1$, that covers the closest to $A_i$ point of the first side (we will refer to this point as $y$). Then set  $\Sigma \setminus \Sigma_1 \cup [z_1z_2]$ is connected, has not greater energy and smaller length than $\Sigma$, so $\Sigma_1$ coincides with segment $[z_1z_2]$.
As point $z_2$ is energetic and $\Sigma_1$ covers only points of the first side, $y$ is the single point corresponding to $z_2$ ($y=y(z_2)$), and in view of Lemma~\ref{lm:energetic_points_angles}(i) points $z_1$, $z_2$ and $y$ belong to the same line.
Clearly, $\dist(y,\Sigma_1)=\dist(y,\Sigma_2)=r$.
As $\dist(\Sigma_1,\Sigma_2) = (2-o_r(1))r$, all points of $(\Sigma_1 \cup \Sigma_2) \cap \partial B_r(y)$ are located at the distance $o_r(1)r$ from the first side (in particular; it means that point $y$ cannot coincide with $A_i$). 
Let $x$ be an arbitrary point of $\Sigma_2 \cap \partial B_r(y)$. Then $x$ lies at the distance $o_r(1)r$ from the first side. The order of point $x$ is either $1$ or $2$.

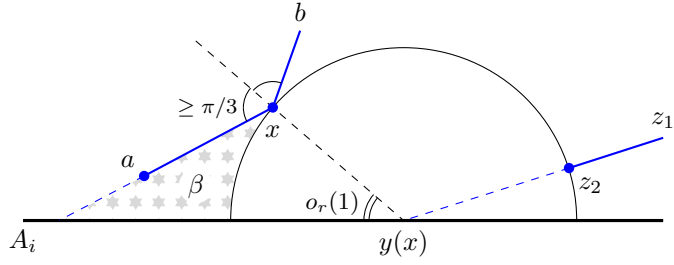
\begin{figure}
    \centering
    \begin{tikzpicture}[scale=2.3]
        \coordinate (C1) at (-2.2, 0);
        \coordinate (C2) at (1.5, 0);
        \coordinate (Y) at (0, 0);
        \coordinate (A) at (-1.5, 0.25768);
        \coordinate (B) at (-0.59841, 1.09832);
        \coordinate (X) at (-0.75645, 0.65406);
        \coordinate (Z1) at (1.5, 0.4787);
        \coordinate (Z2) at (0.95574, 0.30365);
        \coordinate (L) at (-1.20164, 1.03899);
        \coordinate (Z) at (-1.9925, 0);
        \coordinate (beta) at (-1.2, 0.2);
        
        \def\oblastj{(X) arc (139.152 : 180 : 1) -- (Z) -- cycle}
        \draw[white, pattern color=gray!30!white, pattern=sixpointed stars] \oblastj;
        
        \draw[very thick] (C1) -- (C2);
        \draw[blue, dashed] (Z2) -- (Y);
        \draw (1, 0) arc (0 : 180 : 1);
        \draw[dashed] (L) -- (Y);
        \draw[thick, blue] (Z1) -- (Z2);
        \draw[thick, blue] (A) -- (X) -- (B);
        \draw[blue, dashed] (A) -- (Z);
    
        \def\nodesize{0.03}
    
        \fill[blue] (A) circle (\nodesize)
            node[black, above left]{$a$};
        \draw (Y) node[below]{$y(x)$};
        \draw (B) node[above]{$b$};
        \draw (Z1) node[above]{$z_1$};
        \fill[blue] (Z2) circle (\nodesize)
            node[black, below right]{$z_2$};
        \fill[blue] (X) circle (\nodesize);
        \draw[shift={(X)}] (0, -.05) node[black, below]{$x$};
        
        \fill[white] (beta) circle (0.1) node[black]{$\beta$};
        
        \draw (C1) node[below]{$A_i$};
        
        \draw[shift={(Y)}] (-.2, 0) arc(180 : 139.152 : 0.2);
        \draw[shift={(Y)}] (-.23, 0) arc (180 : 139.152 : 0.23);
        \draw[shift={(Y)}] (-.2, .1) node[left]{\small$o_r(1)$};
        
        \def\rr{0.17}
        \draw[shift={(X)}]
        (-\rr, 0) arc (180 : 208 : \rr);
        \draw[shift={(X)}]
        (-\rr, 0) arc (180 : 139 : \rr);
        \def\rrr{0.15}
        \draw[shift={(X)}]
        (-0.7547 *\rrr, \rrr * .656) arc (139 : 70.4 : \rrr);
        
        \draw[shift={(X)}] (-.15, 0) node[left]{\footnotesize $\geq \pi/3$};
        
    \end{tikzpicture} 
    \caption{Lemma~\ref{hujugl}. Case about the energetic point of order $2$ and the single point corresponding to it.}
    \label{fig:netrudno_videtj}
\end{figure}

\begin{itemize}
\item \textbf{Let $x$ be an energetic point of order $2$.} Denote by $a,b \neq x$ the ends of the maximal segments such that $([ax]\cup [xb])\subset \Sigma$. 
\begin{itemize}
\item  \textbf{Suppose there exists a point $y_1\neq y$ corresponding to $x$.} As $B_r(y_1)\cap \Sigma=B_r(y)\cap \Sigma =\emptyset$ the inequalities $\angle y_1xa \geq \pi/2$ and $\angle yxb \geq \pi/2$ (without loss of generality, points $a$, $y_1$, $y$ and $b$ are located in counterclockwise order) hold. Moreover, in view of the optimality of $\Sigma$ the inequality $\angle axb \geq 2\pi/3$ holds; then angle $\angle y_1xy$ does not exceed $\pi/3$, and thus $y_1 \in B_r(y)\cap M$. But it is impossible since the angle between $[yx)$ and the side of $M$ is $o_r(1)$, and point $y_1$ belongs to this angle, so $|y_1x|<r$. 

\item \textbf{Let $y$~ be the single point corresponding to $x$.} In view of Lemma~\ref{lm:energetic_points_angles}(ii) there exists such maximal distance $[ax],[bx] \subset \Sigma$ that $\angle axb \geq 2\pi/3$, $[y(x)x)$ contains the bisector of $\angle axb$, and $\angle y(x)xa, \angle y(x)xb \geq \pi/2$, where $y(x)$ is a point corresponding to $x$. Without loss of generality, ray $[xa)$ is directed towards the line containing the first side. Denote by $\beta$ the area bounded by ray $[xa)$, straight line containing the first side and a circumference $\partial B_r(y)$. In view of conditions on the point $x$, expression 
$\overline {\beta} \subset B_{o(r)}(x)\subset B_r(x)$ holds. Denote by $H^+$ the half-plane, bounded by $(ax)$ and containing $\beta$. Then $H^+ \cap \overline {B_r([ax])}\cap N \subset B_r(x)$, and thus, in view of Proposition~\ref{hs}, point $a$ can not be energetic point of order $1$.  
If $a$ is an energetic point of order $2$  or a branching point, then there exists such a segment $[ac_1] \subset \Sigma$ that $c_1 \in \Int \beta$. In view of Proposition~\ref{hs} point $c_1$ can not be an energetic point of order $1$, in case if it is an energetic point of order $2$ or a branching point, there exists a next point $c_2\in \Sigma$ such that the maximal segment $]c_1c_2[ \subset \Int \beta$ and $[c_1c_2)\cap [xa)=\emptyset$ (and hence cuts off a half-plane without points covering by $c_2$, thus in view of Proposition~\ref{hs} point $c_2$ can not have order $1$). And so on, there exists an infinite sequence of points $c_i$ such that the maximal segments $[c_ic_{i+1}] \subset \Sigma$ satisfy $]c_ic_{i+1}[ \subset \Int \beta$ and $[c_{i}c_{i+1})\cap [xa)=\emptyset$ for every number $i$, which is impossible. 
\end{itemize}

\item \textbf{Let $x$ be an energetic point of order $1$.}

\begin{itemize}
\item \textbf{Let $y$ be the single point corresponding to $x$.} Consider the maximal segment $[wx]\subset \Sigma_2$. By Lemma~\ref{lm:energetic_points_angles}(i) the expression $y \in (wx)$ holds. Point $w$ can not be an energetic point of order $2$: assume the contrary, then, in view of the expressions $\angle y(w)yw=\angle y(w)yx=o_r(1)$ and $|y(w)w|=r$, there exists only one point $y(w)$ corresponding to $w$. Then in view of item~(ii) of Lemma~\ref{lm:energetic_points_angles} inequality $\angle ((yw], (wy(w)]) \geq \pi/3$ holds. But it contradicts the law of sines
\[
r=|y(w)w|=\sin \angle y(w)yw \frac{|yw|}{\sin \angle yy(w)w}\leq o_r(1)\frac{(p_M(i) + 4)r}{\sin (\pi/3-o_r(1))}=o(r).
\]
As $\Sigma$ is connected, point $w$ also can not be an energetic point of order $1$. And $w$ can not be a branching point, otherwise there will be such a maximal segment $[wc_1]\subset \Sigma$, that $[wc_1)$ intersects a line containing the first side. Denote by $\delta$ an area, bounded by $[wc_1)$, $[wx]$, $M$ and $\partial B_r(y)$. In what follows, we will apply the same reasoning to area $ \delta $ as to area $ \beta $ above. The distance from $M$ to an arbitrary point of area $\delta$ is $o(r)$. As $B_r(y) \cap \Sigma =\emptyset$ due to Proposition~\ref{hs} point $c_1$ can not be terminal. If $c_1$ has order $2$ or is branching point, there exists such a maximal segment $[c_1c_2]\subset \Sigma$, that $]c_1c_2[ \subset \Int \delta$ and $[c_1c_2)\cap[wc_1)=\emptyset$. And hence line $(c_1c_2)$ cut off the half-line, where there is no points, covered by $c_2$, so in view of Proposition~\ref{hs} point $c_2$ can not be of order $1$. And so on, thus there exists an infinite sequence of such points $c_i$ that maximal segments $[c_1c_2] \subset \Sigma$ satisfy $]c_ic_{i+1}[ \subset \Int \delta$ and $[c_{i}c_{i+1})\cap [wc_1)=\emptyset$, which is impossible.

\item \textbf{Suppose there exists a point $y' \neq y$, corresponding to $x$.}
As the distance from $x$ to the first side is $o_r(1)r$, the point $y'$ also is at a distance $o_r(1)r$ from the first side, otherwise the segment $[yy'] \subset B_r(y) \cup B_r(y')$ separates $x$ from another points of $\Sigma$. Thus the angle between the circles $\partial B_r(y)$ and $\partial B_r(y')$ is $o_r(1)$. Consider segment $[xw]\subset \Sigma_2$, then the angle between $(xw)$ and the first side is $\pi/2 - o_r(1)$.

Let $w' := \partial B_\varepsilon (x) \cap [xw]$ for sufficiently small $\varepsilon>0$. Consider triangle $\triangle yy'w'$, there $\angle yw'y' = \pi - o_r(1)$ and
$|y'w'|, |yw'| > r$. Then one can replace in $\Sigma$ segment $[xw']$ with segments $[w'v]$ and $[w'v']$, where $v := \partial B_r(y) \cap [yw']$,  $v' := \partial B_r(y') \cap [y'w']$.
This replacement decreases the length: let $t$ be the intersection point of the first side and line $(xw')$, then
\[
|yv| = |yw'| - r = \sqrt{|tw'|^2 + |ty|^2 - 2|tw'|\cdot |ty| \cos \angle ytw'} - \sqrt{|tx|^2 + |ty|^2 - 2|tx|\cdot |ty| \cos \angle ytw'} = 
\]
\[
\frac{|tw'|^2 + |ty|^2 - 2|tw'|\cdot |ty| \cos \angle ytw' - |tx|^2 + |ty|^2 - 2|tx|\cdot |ty| \cos \angle ytw'}{\sqrt{|tw'|^2 + |ty|^2 - 2|tw'|\cdot |ty| \cos \angle ytw'} + \sqrt{|tx|^2 + |ty|^2 - 2|tx|\cdot |ty| \cos \angle ytw'}}.
\]
Since $\angle ytw' = \frac{\pi}{2} - o_r(1)$, one has $\cos \angle ytw' = o_r(1)$ holds. Hence the numerator of the similar expression for $|yv'|$ is equal to
\[
|tw'|^2 - |tx|^2 + |ty|(|tw'|-|tx|) o_r(1) = (|tw'|-|tx|)(|tw'| + |tx| + o_r(1)|ty|) = \varepsilon r o_r(1). 
\]
On the other hand, the denominator of the expression for $|yv|$ is greater than $2r$, thus
$|yv| = \varepsilon o_r(1)$, similarly $|yv'| = \varepsilon o_r(1)$. By the definition, $|xw'| = \varepsilon$, thus the replacement reduces the length. Clearly it maintains connectivity and doesn't increase the energy.

\end{itemize}

\end{itemize}

\end{proof}

    Set $\overline{N \setminus \bigcup \alpha_i \setminus N_r} $ consists of $n$ connected components, each of them is a rectangle, we denote it by $l^{i,i+1}$. As there are no loops in $\Sigma$, as in view of~\ref{CorKar} statement $\Sigma \cap P_r =\emptyset$ holds and as $\Sigma \cap \alpha_i$ is connected for every $i$, there is only one number $j$ such that set $\Sigma \cap l^{j,j+1}$ consists of two connected components, for other numbers $j$ sets $\Sigma \cap l^{j,j+1}$ are connected. Wherein $l^{i,i+1} \cap M \subset \gamma (\Sigma \cap l^{i,i+1})$for every $i$.
As the sets $\Sigma \cap \partial \alpha_i \cap l^{i,i+1}$ and $\Sigma \cap \partial \alpha_{i+1} \cap l^{i,i+1}$ are points, each connected set $\Sigma \cap l^{i,i+1}$ is a segment connecting these points.

Let number $i$ be such that $\Sigma \cap l^{i,i+1}$ is not connected and let 
\[
\Sigma^i \sqcup \Sigma^{i+1} \defeq l^{i,i+1} \cap \Sigma,
\]
where $\Sigma^j$ is connected for $j=i,i+1$. Without loss of generality, $\Sigma^j\cap \partial \alpha_j \neq \emptyset$ where $j=i,i+1$. Denote $a_j\defeq \partial \alpha_j \cap M \cap l^{i,i+1}$ and $b_j \defeq \partial \alpha_j \cap \Sigma \cap l^{i,i+1}$. Thus $\gamma(\Sigma^i) \cup \gamma(\Sigma^{i+1}) \supset [a_ia_{i+1}]$.

Note that for arbitrary point $y \in l^{i,i+1}$ inequality $|yb_i|>r$ or $|yb_{i+1}|>r$ holds. Then there exists such $j \in  {i,i+1}$, that $|b_jy|>r$, where $y$ is the farthest from $A_j$ point from set $\gamma(\Sigma^j) \cap l^{i,i+1}$.
Thus $\Sigma^j$ is a set of minimal length covering segment $[a_jy]$ and connecting point $b_j$ with set $\partial B_r(y)$. Then $\Sigma^j = [b_jc_j]$, where $c_j \defeq [b_jy]\cap \partial B_r(y)$. Hence point $c_j$ is energetic and $y=y(c_j)$ so $B_r(y)\cap \Sigma =\emptyset$. The second connected component $\Sigma^{2i+1-j}$ also is a segment connecting $\partial B_r(y)$ with $b_{2i+1-j}$.

By Lemma~\ref{lm:energetic_points_angles}(i) $b_j$, $c_j$ and $y$ are collinear, $b_{2i+1-j}$, $c_{2i+1-j}$ and $y$ are also collinear.
So 
\[
|b_jy| + |yb_{2i+1-j}| =  |b_jc_j| + |b_{2i+1-j}c_{2i+1-j}| + 2r.
\]
It is well-known that the minimal length of $|b_jy| + |yb_{2i+1-j}|$ over $y \in (a_ia_{i+1})$ is reached when the angle of incidence coincides with the angle of reflection, i.e.
\[
\angle b_jyA_j = \angle A_{2i+1-j}yb_{2i+1-j};
\]
obviously, such a point belongs to $[A_1A_n]$.
Hence $[b_{i}c_{i}]$ and $[b_{i+1}c_{i+1}]$ are almost parallel to $A_1A_n$.


Consider the following auxiliary problem.

\begin{problem}
Let $M$ be a polygon and $r < r_0(M)$. Find a connected set $\Sigma'$ of minimal length (one-dimensional Hausdorff measure $\H$) such that
\[
F_M(\Sigma') \leq r
\]
and $\Sigma'$ has a cycle around $P_r$.
\label{TheProblemCycle}
\end{problem}

By the same standard compactness argument as in~\cite{miranda2006one}, Theorem 2.1 allows to prove that Problem~\ref{TheProblemCycle} has a solution.

Recall that point $y \in M\setminus \bigcup_{i=1}^n \alpha_i$ satisfies $B_r(y)\cap \Sigma =\emptyset$;  $c_i, c_{i+1} = \partial B_r(y) \cap \Sigma$. Consider the set $\Sigma_1 := \Sigma \cup [c_ic_{i+1}]$; clearly $\H(\Sigma_1) - \H(\Sigma) = |c_{i}c_{i+1}| \leq 2r$. Also $|c_{i}c_{i+1}| = (2-o_r(1))r$.
Note that $\Sigma_1$ contains a cycle $\mathcal{C}_1$ around $P_r$.

Let $\Sigma'$ be an arbitrary solution of Problem~\ref{TheProblemCycle} with a cycle $\mathcal{C}$ around $P_r$. Obviously $\Sigma' \cap P_r$ is empty and 
$\Sigma' \cap l^{i,i+1}$ is a segment. Hence
\[
\H(\Sigma') - \H(\Sigma) = 2r - o(r).
\]

Now we want to find the topology of optimal $\Sigma'$. We show that in the case of a rectangle the length of a set satisfying conditions of Problem~\ref{TheProblemCycle} (except the minimal length) with another topology exceeds $\H(\Sigma')$ by over $4\cdot 10^{-6}r$.

\subsection{Study of \texorpdfstring{$\mathcal{C}$}{C}}
\label{Sect:studyofC}
First of all, note that $\mathcal{C}$ is a convex polygon with each angle no smaller than $2\pi/3$.

Let $C_1$, $C_2$ be the points on rays $[A_1A_2)$ and $[A_1A_4)$, respectively, such that
\[
|A_1C_1| = |A_1C_2| = (p_M(1) + 3)r =  \left(\frac{2}{\sqrt{3} \sin \frac{\angle A_1}{2}} + \cot \frac{\angle A_1}{2} + 3\right)r.
\]
Let $D_1,D_2\in M_r$ be such points that $|C_1D_1|=|C_2D_2|=r$. Denote $\mathcal{S}=\conv\{A_1, C_1, C_2, D_1, D_2\}$. Consider $\mathcal{C}\cap \mathcal{S}$: the part of $\mathcal{C}$ in the vicinity of angle $A_1$. 
We are going to show that for a more specific choice of $\angle A_1$ the set of inner vertices of polygonal chain $\mathcal{C}\cap \mathcal{S}$ consists of two energetic points of degree two and a Steiner point between them.

Since $\angle A_1 > \pi/3$, and long sides of $\mathcal{C}$ are almost parallel to sides of $M$
\[
\turn (\mathcal{C} \cap \mathcal{S}) = \pi - \angle A_1 + o_r(1) < 2\pi/3.
\]

\begin{lemma}
\label{uniqueYforC}
Let $W \in \mathcal{C}$ be an energetic point of degree 2. Then $y(W)$ is unique.
\end{lemma}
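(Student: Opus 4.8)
\section*{Proof proposal}

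The plan is to argue by contradiction: assume $W$ has two distinct corresponding points $y_1\neq y_2$ and extract a contradiction from the local geometry of the convex cycle $\mathcal{C}$. Two angle facts drive everything. First, since $W\in\mathcal{C}$ has degree $2$, the two segments of $\Sigma'$ issuing from $W$, say $[WZ_1]$ and $[WZ_2]$, are precisely the two edges of the polygon $\mathcal{C}$ incident to $W$ (a third edge would make $W$ a Steiner point), so $\angle Z_1WZ_2$ is the interior angle of $\mathcal{C}$ at $W$, and $\angle Z_1WZ_2\ge 2\pi/3$ because $\mathcal{C}$ is convex with all angles $\ge 2\pi/3$. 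Second, for each $j$ the open ball $B_r(y_j)$ is disjoint from $\Sigma'\supset\mathcal{C}$ while $W\in\partial B_r(y_j)$; a ray leaving a boundary point of a ball avoids the open ball exactly when it makes an angle $\ge\pi/2$ with the inward radius $[Wy_j)$, so $\angle y_jWZ_i\ge\pi/2$ for all $i,j\in\{1,2\}$. If $W$ is an interior point of an edge of $\mathcal{C}$, then $\angle Z_1WZ_2=\pi$, the two conditions force $[Wy_1)$ and $[Wy_2)$ to be the same (perpendicular) direction, and $y_1=y_2$ — contradiction. Hence we may assume $W$ is a genuine vertex of $\mathcal{C}$.

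Now the angle chase. The set of directions at $W$ making angle $\ge\pi/2$ with both $[WZ_1)$ and $[WZ_2)$ is the normal cone of $\mathcal{C}$ at $W$, a closed circular sector of angular width $\pi-\angle Z_1WZ_2\le\pi/3$. Both $[Wy_1)$ and $[Wy_2)$ lie in this sector, so $\angle y_1Wy_2\le\pi/3$, and with $|Wy_1|=|Wy_2|=r$ we get $|y_1y_2|=2r\sin\tfrac{\angle y_1Wy_2}{2}\le r$. For $r<r_0(M)$ distinct non-adjacent sides of $M$ are farther than $r$ apart, so $y_1,y_2$ lie either on one side of $M$ or on two sides meeting at a common vertex $A_i$. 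Recall from the structure of $\mathcal{C}$ established above that along each side (outside the corner heptagons) $\mathcal{C}$ is either a single segment or, on the one exceptional side, the chain $[b_ic_i]\cup[c_ic_{i+1}]\cup[c_{i+1}b_{i+1}]$ whose only genuine vertices are $c_i,c_{i+1}$, which lie at distance $o_r(1)r$ from that side. So a genuine vertex $W$ of $\mathcal{C}$ is either one of these points $c_j$, or it lies in a corner heptagon $\alpha_i$. In the first case the two points of the side $s\ni y_1,y_2$ at distance $r$ from $W$ are $(2-o_r(1))r$ apart, since $W$ is only $o_r(1)r$ from $s$; this contradicts $|y_1y_2|\le r$.

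The remaining case — $W$ in a corner heptagon $\alpha_i$ — is the main obstacle. Then $|y_1y_2|\le r$ forces both corresponding points to lie within $\approx 2r$ of $A_i$, on the two sides through $A_i$, and the directions $[Wy_1),[Wy_2)$ point essentially towards $A_i$, so the normal cone of $\mathcal{C}$ at $W$ points towards $A_i$. The plan is to contradict this using the structure of $\mathcal{C}\cap\mathcal{S}$ developed in this subsection: $\mathcal{C}$ is convex with $\turn(\mathcal{C}\cap\mathcal{S})=\pi-\angle A_i+o_r(1)<2\pi/3$ and each interior angle $\ge 2\pi/3$, it encloses $P_r$ (which stays at distance $\gtrsim p_M(i)r$ from $A_i$, hence far beyond $W$), and it avoids $B_r(y_1)\cup B_r(y_2)$ while still covering $[y_1y_2]$. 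From the outward‑normal property of convex curves, $P_r$ must lie in the half‑plane cut off at $W$ by a line through $W$ roughly perpendicular to $[WA_i]$; combined with $W$ being within $2r$ of $A_i$ and the small turning budget of $\mathcal{C}\cap\mathcal{S}$, one shows $\mathcal{C}$ cannot close up convexly around $P_r$ without either pushing $W$ into the interior of the region it bounds or developing an angle $<2\pi/3$. I expect this near‑corner bookkeeping, not the angle chase, to be the delicate part; an alternative route is to substitute the degree‑$2$ length‑derivative formula $2\cos\alpha\cos\tfrac{\angle z_1Wz_2}{2}$ into Proposition~\ref{nonnegative} for each of $y_1$ and $y_2$ and extract a contradiction on the incidence angles $\alpha$.
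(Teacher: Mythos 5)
Your opening angle chase is sound and even sharper than what the paper uses: from the interior angle of $\mathcal{C}$ at $W$ being at least $2\pi/3$ and the fact that both edges at $W$ must leave each ball $B_r(y_j)$ non-tangentially inward (angle $\geq \pi/2$ with each radius), you correctly get $\angle y_1Wy_2\leq \pi/3$ and hence $|y_1y_2|\leq r$, so $y_1,y_2$ lie on one side of $M$ or on two sides meeting at a vertex $A_i$. But the proof is not finished. The case you yourself call ``the main obstacle'' --- $W$ in a corner heptagon with $y_1,y_2$ on the two sides through $A_i$ --- is only a plan (``one shows $\mathcal{C}$ cannot close up convexly\dots''), with no actual argument; the alternative via the derivative formula is likewise not carried out. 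Moreover, your same-side analysis silently assumes $W$ lies at distance $o_r(1)r$ from that side (the $c_j$-type points), whereas nothing you proved excludes a degree-2 energetic vertex $W$ inside a corner heptagon whose two corresponding points both lie on a single side; with $|y_1y_2|\leq r$ this configuration is not ruled out by your distance argument, and you give no argument for it.

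For comparison, the paper closes both cases with short arguments that you could have used. If $y_1,y_2$ lie on the same side, it looks at the line parallel to that side at distance $r$: the convex cycle $\mathcal{C}$, avoiding both open balls and passing through $W$ below that line, can only cross it inside the segment $[U_1U_2]$ between the two tangency points, which has length at most $2r$; but the side of $P_r$ lying on that very line is longer than $2r$ for $r<r_0(M)$, so $\mathcal{C}$ cannot enclose $P_r$ --- no assumption on where $W$ sits along the side is needed. If $y_1,y_2$ lie on different sides through $A_i$, it splits on $|A_iW|$: when $|A_iW|\leq r$, the angle sum in the quadrilateral $A_iy_1Wy_2$ (using that the angles at $y_1,y_2$ are at most those at $A_i$) gives $\angle y_1Wy_2\geq 2\pi-2\angle A_i$, which for the rectangle is $\pi$ and contradicts your bound $\angle y_1Wy_2\leq\pi/3$ (equivalently, leaves less than $2\pi/3$ of angular room for the two edges of $\mathcal{C}$); when $|A_iW|>r$, the two open balls must be disjoint, since otherwise no point of $\Sigma'$ could come within $r$ of $A_i$, hence they are tangent at $W$ and $\angle y_1Wy_2=\pi$, again incompatible. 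Supplying these two arguments (or equivalents) is exactly what is missing from your proposal.
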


\begin{proof}
Suppose the contrary, i.e. there exist different points $y_1(W)$ and $y_2(W)$. 

Suppose that $y_1(W)$ and $y_2(W)$ belong to the same side of $M$, without loss of generality it is $A_1A_2$ (in particular $y_i(W)$ may coincide with $A_i$).
Put $U_1 = \partial(B_r (y_1(W))) \cap \partial B_r (A_1A_2) \cap N$, $U_2 = \partial(B_r (y_2(W))) \cap \partial B_r(A_1A_2) \cap N$.
Since $B_r(y_1(W)) \cap \Sigma' = B_r(y_2(W)) \cap \Sigma' = \emptyset$, convex polygon $\mathcal{C}$ crosses line $(U_1U_2)$ twice at points $Y_1,Y_2 \in [U_1U_2]$.
The length of $[U_1U_2]$ is at most $2r$; on the other hand, the corresponding side of $P_r$ is a subset of line $(U_1U_2)$ and has length bigger than $2r$ for $r < r_0(M)$. Hence, $\mathcal{C}$ cannot be a cycle around $P_r$, contradiction.

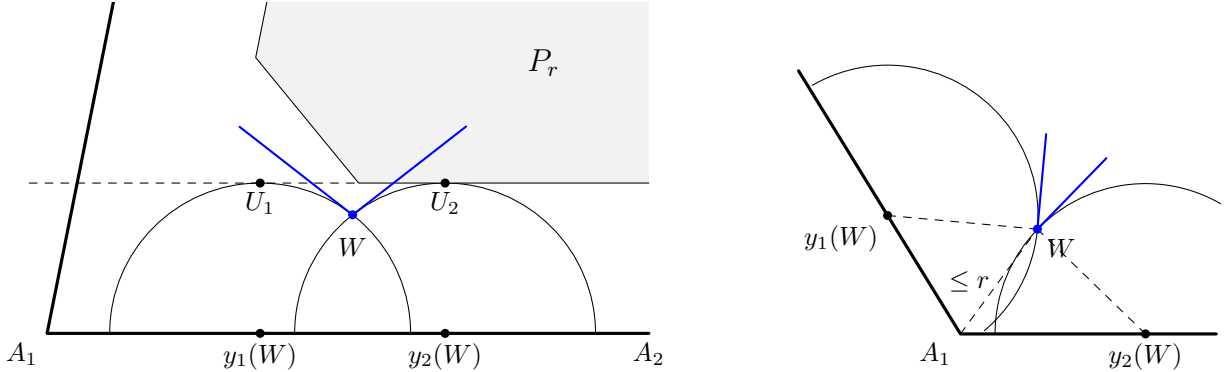
\begin{figure}[h]
    \hfill
    \begin{tikzpicture}[line cap=round,line join=round,>=triangle 45, scale = 2]
        
        \clip (-0.3, -0.3) rectangle (4.2, 2.2);
    
        \draw [very thick] (4,0)-- (0,0) -- (0.5,2.5);
        \draw [] (2.4164,0) arc (0 : 180 : 1);
        \draw [] (1.6457,0) arc (180 : 0 : 1);
        
        \fill[gray!10]
        (1.5198039027185568,2.5)-- (1.386782595922718,1.834893466020806)-- (2.071231517720798,1) -- (4,1) -- (4, 2.5) -- cycle;
        
        \draw  (1.5198039027185568,2.5)-- (1.386782595922718,1.834893466020806)-- (2.071231517720798,1) -- (4,1);
        
        \draw [,dashed] (-0.11911169737519867,1)-- (2.071231517720798,1);

        \draw[thick, blue]
        (1.28, 1.37387)-- 
        (2.031, 0.788) -- 
        (2.785, 1.376);

        \def\nodesize{0.03}
        \fill[] (0,0) 
            node[below left]{$A_1$};
        \node[below] at(4, 0) {$A_2$};
        \fill[blue] (2.031,0.7888) circle (\nodesize);
        \node[black, below] at(2.031,0.7) {$W$};
        \fill[] (1.4164,0) circle (\nodesize)
            node[below]{$y_1(W)$};
        \fill[] (2.6457,0) circle (\nodesize)
            node[below]{$y_2(W)$};
        \fill[] (1.4164,1) circle (\nodesize)
            node[below]{$U_1$};
        \fill[] (2.6457,1) circle (\nodesize)
            node[below]{$U_2$};
            
        \node at (3.3, 1.8) {\large $P_r$};
        
    \end{tikzpicture}
    \hfill
    \begin{tikzpicture}[line cap=round,line join=round,>=triangle 45,scale=2]
        \coordinate (W) at (0.5113, 0.697);
        \coordinate (y1) at (-0.4845,0.788);
        \coordinate (y2) at (1.23,0);
    
        \draw [very thick] (1.7,0)-- (0,0) -- (-1.076,1.75);
        \draw [] (0.5155,0.788) arc (0 : 120 : 1);
        \draw [] (0.5155,0.788) arc (0 : -50 : 1);
        \draw [] (0.23,0) arc (180 : 60 : 1);
        \draw [dashed] (y1) -- (W);
        \draw [dashed] (y2) -- (W);
        \draw [dashed] (0,0) -- (W) node[pos=0.5, left]{$\leq r$};
        \draw [blue, thick] (0.5695,1.329)--(W)-- (0.9699,1.16888);
        
        \def\nodesize{0.03}
            
        \fill [] (0,0) node [below left]{$A_1$};
        \fill [] (y1) circle (\nodesize)
            node[below left]{$y_1(W)$};
        \fill [] (y2) circle (\nodesize)
            node[below]{$y_2(W)$};
        \fill [blue] (W) circle (\nodesize)
            node[black, below right]{$W$};
    \end{tikzpicture}
    \hfill
    $\mathstrut$
    \caption{Cases of arrangement of points in Lemma~\ref{uniqueYforC}}
    \label{lemma3.1}
\end{figure}

Thus $y_1(W)$ and $y_2(W)$ belong to different sides of $M$. 
Suppose that $|A_1W| \leq  r$, then $\angle A_1 y_1(W) W \leq \angle WA_1y_1(W)$ and $\angle A_1 y_2(W) W \leq \angle WA_1y_2(W)$. By the sum of the angles in quadrangle $A_1y_1(W)Wy_2(W)$, we have
\[
\angle W = 2\pi - \angle A_1 - \angle y_1(W) - \angle y_2(W) \geq  2\pi - \angle A_1 - \angle WA_1y_1(W) - \angle WA_1y_2(W) = 2\pi - 2\angle A_1 > \frac{2\pi}{3}.
\]
So the angle between $\partial B_r(y_1(W))$ and $\partial B_r(y_2(W))$ is smaller than $\pi/3$. So $W$ cannot be a vertex of $\mathcal{C}$, since all the angles of $\mathcal{C}$ have measure at least $2\pi/3$.

So $|A_1W| > |y_2(W)W| = r$ and $|A_1W| > |y_1(W)W| = r$.
Hence open circles $B_r(y_1(W))$ and $B_r(y_2(W))$ cannot intersect, otherwise $\Sigma'$ does not cover $A_1$; so $B_r(y_1(W))$ and $B_r(y_2(W))$ are tangent. 
Again, $W$ cannot be a vertex of $\mathcal{C}$, since all the angles of $\mathcal{C}$ have measure at least $2\pi/3$; contradiction.
\end{proof}

\begin{lemma}\label{lm:energetic_neighbours}
$\mathcal{C}$ contains no energetic points $W_1$, $W_2 \in \mathcal{S} \setminus M_r$ of degree 2, such that $[W_1W_2] \subset \Sigma'$.
\end{lemma}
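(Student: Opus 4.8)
The plan is a proof by contradiction. Suppose $W_1,W_2\in\mathcal S\setminus M_r$ are energetic of degree $2$ with $[W_1W_2]\subset\Sigma'$, and set $y_i:=y(W_i)$, which is well defined and unique by Lemma~\ref{uniqueYforC}. The first step is to pin down the local picture at each $W_i$. The two segments of $\Sigma'$ at $W_i$ are exactly the two edges of $\mathcal C$ incident to $W_i$; since $B_r(y_i)\cap\Sigma'=\emptyset$ while $W_i\in\partial B_r(y_i)$, neither edge can enter the open disk $B_r(y_i)$, so each of them makes an angle $\ge\pi/2$ with the ray $[W_iy_i)$. By Lemma~\ref{lm:energetic_points_angles}(ii), $[W_iy_i)$ bisects the angle between these two edges. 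As $\mathcal C$ is a convex polygon with all angles in $[2\pi/3,\pi]$, the angle bisected by $[W_iy_i)$ must be the exterior angle of $\mathcal C$ at $W_i$ (the interior angle would otherwise have to equal $\pi$, a degenerate sub-case handled below); consequently $y_i$ lies strictly on the side of the line $(W_1W_2)$ away from the region enclosed by $\mathcal C$, and $\angle y_iW_iW_{3-i}\in[\pi/2,2\pi/3]$.

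Next I locate $y_1$ and $y_2$. They are distinct: if $y_1=y_2$, then $[W_1W_2]$ is a chord of $\partial B_r(y_1)$ and its relative interior lies in the open disk $B_r(y_1)$, contradicting $B_r(y_1)\cap\Sigma'=\emptyset$. Since $W_1,W_2\in\mathcal S$, both $y_1$ and $y_2$ are within $2r$ of $A_1$, hence each lies on one of the two sides of $M$ meeting at $A_1$ (possibly coinciding with $A_1$). I then split into two cases.

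\textbf{Case 1: $y_1$ and $y_2$ lie on the same side of $M$}, say on $[A_1A_2]$. Then $B_r(y_1)$ and $B_r(y_2)$ are disks of radius $r$ with centres on $(A_1A_2)$, and by Step~1 the polygon $\mathcal C$ is contained in the intersection of the two supporting half-planes cut off by the tangents to $\partial B_r(y_j)$ at $W_j$. But $\mathcal C$ must cover $[A_1A_2]$ and enclose $P_r$, whose side parallel to $A_1A_2$ lies on $M_r$ at distance exactly $r$ from $(A_1A_2)$; inspecting the two supporting lines shows this is possible only if the part of $\mathcal C$ near $A_1$ — in particular the edge $[W_1W_2]$ and its two neighbours — lies on that side of $M_r$, which forces $W_1,W_2\in M_r$, contradicting the hypothesis. (If the interior angle of $\mathcal C$ at some $W_i$ equals $\pi$, the conclusion is immediate: then $[W_1W_2]$ is part of a straight edge of $\mathcal C$ perpendicular to $[W_iy_i)$ and at distance $r$ from $[A_1A_2]$, hence on $M_r$.)

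\textbf{Case 2: $y_1\in[A_1A_2]$ and $y_2\in[A_1A_4]$.} Here I argue quantitatively, in the spirit of the proof of Lemma~\ref{uniqueYforC}. By Step~1 the chain $\mathcal C\cap\mathcal S$ avoids the open set $B_r(y_1)\cup B_r(y_2)$, it winds around $P_r$, and $\turn(\mathcal C\cap\mathcal S)=\pi-\angle A_1+o_r(1)<2\pi/3$. Using $r<r_0(M)$ — so that the sides of $P_r$ are longer than $2r$ and the corner of $P_r$ near $A_1$ sits at a definite distance from $A_1$ — one checks that these three constraints are incompatible: passing from the long edge of $\mathcal C$ along $A_1A_2$ to the long edge along $A_1A_4$ while staying outside $P_r$ and outside $B_r(y_1)\cup B_r(y_2)$ forces the chain to accumulate turning strictly larger than $2\pi/3$, a contradiction. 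I expect Case~2 to be the main obstacle: making the dichotomy ``$\mathcal C\cap\mathcal S$ meets $B_r(y_1)\cup B_r(y_2)$, or it turns by more than $2\pi/3$'' fully precise requires the explicit constants hidden in the definitions of $\mathcal S$ and $P_r$ (the quantities $p_M(1)+3$ and $p_M(1)+2$) together with careful bookkeeping of the $o_r(1)$ terms coming from the ``almost parallel'' long edges.
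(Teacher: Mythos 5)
Your skeleton (uniqueness of $y(W_i)$ from Lemma~\ref{uniqueYforC}, then the dichotomy according to whether $y(W_1),y(W_2)$ lie on the same side of $M$ or on the two sides meeting at $A_1$) is exactly the paper's, and your Step~1 and the observation $y(W_1)\neq y(W_2)$ are sound (the claim that $y_i$ is within $2r$ of $A_1$ should read within roughly $(p_M(1)+4)r$, but only ``on a side adjacent to $A_1$'' is needed). The genuine gap is in Case~1: the inspection of the two tangent half-planes does not force $W_1,W_2\in M_r$. The tangent line to $\partial B_r(y(W_j))$ at $W_j$ cuts off the far portion of the line containing $M_r$ only when $W_j$ lies on the $A_1$-side of its circle; if $W_j$ sits on the far side of its circle (still strictly below the $M_r$-line, e.g.\ $y(W_j)$ at distance $2r$ from $A_1$ and $W_j$ at polar angle $\pi/3$ on $\partial B_r(y(W_j))$), the corresponding half-plane contains the entire side of $P_r$ and your inspection yields no contradiction; the parenthetical degenerate case is also wrong, since a tangent line at a non-topmost point of the circle is not at distance $r$ from $[A_1A_2]$, so ``hence on $M_r$'' does not follow. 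What actually closes this case --- and is what the paper does --- is a confinement argument: because $\Sigma'$ misses both open disks, the connected arc of $\mathcal{C}$ lying strictly between $[A_1A_2]$ and the $M_r$-line cannot cross the vertical chords of the disks through $y(W_1)$ and $y(W_2)$ (and it cannot lie entirely beyond the farther chord either: if the abscissa of $W_1$ were at least that of $y(W_2)$, then $|W_1y(W_2)|\le|W_1y(W_1)|=r$, forcing $y(W_1)=y(W_2)$); hence $\mathcal{C}$ meets the $M_r$-line only inside the segment $[U_1U_2]$ of length $O(r)$, which cannot contain the corresponding side of $P_r$, contradicting that $\mathcal{C}$ is a cycle around $P_r$.

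Case~2 is not proved at all: you propose a turning dichotomy and yourself flag that it is not made precise. Worse, the mechanism is doubtful: at the level of turning together with the angle conditions at $W_1,W_2$ (each edge meets the radius at angle $\ge\pi/2$, the radius bisecting the outer angle), a direct edge $[W_1W_2]$ with interior angles summing to about $3\pi/2$ is perfectly compatible with total corner turning $\pi-\angle A_1+o_r(1)$, so turning alone cannot give a contradiction without substantial extra metric input. The paper's argument here is short and qualitative and of a different nature: the chain $y(W_1)W_1W_2y(W_2)$ splits $M$ into two parts, and the smaller part (around $A_1$) cannot be covered, because $\Sigma'$ avoids both open disks (which contain the interiors of the radii $[y(W_i)W_i]$), cannot branch at an interior point of the straight segment $[W_1W_2]$ (a branching point has three angles equal to $2\pi/3$) nor attach anything extra at the degree-$2$ points $W_1,W_2$, and is connected --- so $\Sigma'$ never enters the corner region at all. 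You should replace your Case~2 by this separation/covering argument rather than attempt the quantitative turning estimate.
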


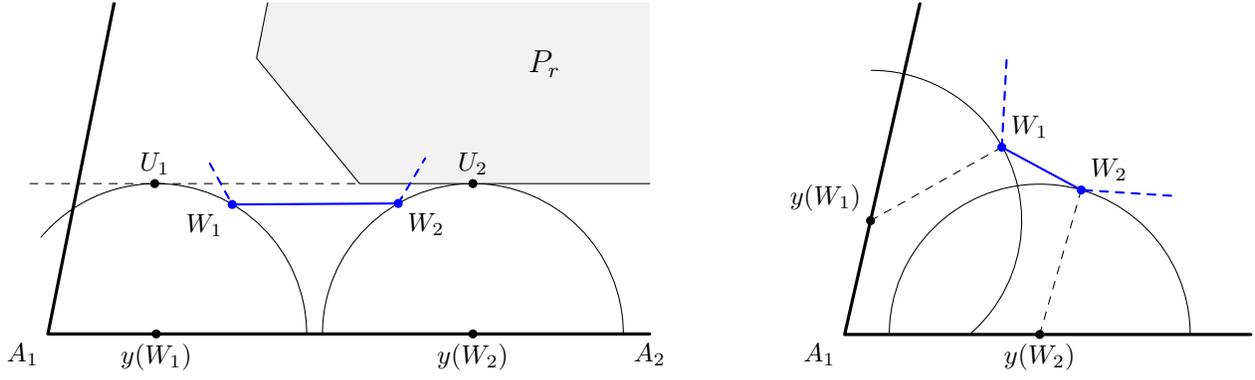
\begin{figure}[h]
    \hfill
    \begin{tikzpicture}[line cap=round,line join=round,>=triangle 45, scale = 2]
        
        \clip (-0.3, -0.3) rectangle (4.2, 2.2);
    
        \draw [very thick] (4,0)-- (0,0) -- (0.5,2.5);
        \draw [] (1.72,0) arc (0 : 140 : 1);
        \draw [] (3.824999342326124,0) arc (0 : 180 : 1);
        
        \fill[gray!10]
        (1.5198039027185568,2.5)-- (1.386782595922718,1.834893466020806)-- (2.071231517720798,1) -- (4,1) -- (4, 2.5) -- cycle;
        
        \draw  (1.5198039027185568,2.5)-- (1.386782595922718,1.834893466020806)-- (2.071231517720798,1) -- (4,1);
        
        \draw [,dashed] (-0.11911169737519867,1)-- (2.071231517720798,1);

        \draw [thick, blue, dashed] 
        (1.076604524, 1.133423838) --
        (1.224691222536784,0.8611887231248121);
        \draw[thick, blue]
        (1.224691222536784,0.8611887231248121)-- 
        (2.3285156383197783,0.8680460423595855);
        \draw[thick,blue,dashed]
        (2.3285156383197783,0.8680460423595855) --
        (2.507822365, 1.168589361);

        \def\nodesize{0.03}
        \fill[] (0,0) 
            node[below left]{$A_1$};
        \fill[] (0.72,0) circle (\nodesize)
            node[below]{$y(W_1)$};
        \fill[] (2.824999342326124,0) circle (\nodesize)
            node[below]{$y(W_2)$};
        \fill[blue] (1.224691222536784,0.8611887231248121) circle (\nodesize)
            node[black, below left]{$W_1$};
        \fill[blue] (2.3285156383197783,0.8680460423595855) circle (\nodesize)
            node[black, below right]{$W_2$};
        \fill[] (0.71,1) circle (\nodesize)
            node[above]{$U_1$};
        \fill[] (2.824999342326124,1) circle (\nodesize)
            node[above]{$U_2$};
            
        \node at (3.3, 1.8) {\large $P_r$};
        \node[below] at(4, 0) {$A_2$};
        
    \end{tikzpicture}
    \hfill
    \begin{tikzpicture}[line cap=round,line join=round,>=triangle 45,scale=2]
        \draw [very thick] (2.7,0)-- (0,0) -- (0.5,2.2);
        \draw [] (0.83,0) arc (-49.22 : 90 : 1);
        \draw [] (2.2969093477858555,0) arc (0 : 180 : 1);
        \draw [dashed] (1.0445427735477741,1.245860012001794)-- (0.17208443509607213,0.7571715144227174);
        \draw [dashed] (1.5724079138778357,0.9613014824087514)-- (1.2969093477858555,0);
        \draw [blue, thick] (1.0445427735477741,1.245860012001794)-- (1.5724079138778357,0.9613014824087514);
        \draw [blue, thick, dashed] (1.044542773547774,1.2458600120017942)-- (1.0776332073831516,1.8446255640573688);
        \draw [blue, thick, dashed] (2.170867053122538,0.9230677042234854)-- (1.5724079138778357,0.9613014824087515);
        
        \def\nodesize{0.03}
            
        \fill [] (0,0) node [below left]{$A_1$};
        \fill [] (0.17208443509607213,0.7571715144227174) circle (\nodesize)
            node[above left]{$y(W_1)$};
        \fill [] (1.2969093477858555,0) circle (\nodesize)
            node[below]{$y(W_2)$};
        \fill [blue] (1.0445427735477741,1.245860012001794) circle (\nodesize)
            node[black, above right]{$W_1$};
        \fill [blue] (1.5724079138778357,0.9613014824087514) circle (\nodesize)
            node[black, above right]{$W_2$};
    \end{tikzpicture}
    \hfill
    $\mathstrut$
    \caption{Cases of arrangement of points in Lemma~\ref{lm:energetic_neighbours}}
    \label{lemma3.3}
\end{figure}

\begin{proof}
Suppose the contrary. By Lemma~\ref{uniqueYforC} there are unique points $y(W_1)$, $y(W_2)$.

Consider the case that $y(W_1)$ and $y(W_2)$ belong to the same side of $M$, say $[A_1A_2]$.
Put $U_1 = \partial(B_r (y(W_1))) \cap \partial B_r (A_1A_2) \cap N$, $U_2 = \partial(B_r (y(W_2))) \cap \partial B_r(A_1A_2) \cap N$.
Since $B_r(y(W_1)) \cap \Sigma' = B_r(y(W_2)) \cap \Sigma' = \emptyset$, a convex polygon $\mathcal C$ intersects $[U_1U_2]$ at two points. Since $W_1,W_2 \in \mathcal{S}$, the length of $[U_1U_2]$ is smaller than the length of the corresponding side of $P_r$. Hence $\mathcal {C}$ cannot be a cycle around $P_r$, contradiction.

In the case when $y(W_1)$ and $y(W_2)$ belong to different sides of $M$, polygonal chain $y(W_1)W_1W_2y(W_2)$ splits $M$ into two parts. The smaller part cannot be covered by $\Sigma'$, because $\Sigma' \cap B_r(y(W_1)) = \emptyset$ and $\Sigma' \cap B_r(y(W_2)) = \emptyset$. We got a contradicion.

\end{proof}

Now we analyze all possible arrangements of energetic points in $\mathcal{C}\cap \mathcal{S}$.

\begin{lemma}\label{lm:CycleInAngle}
Suppose that $\frac{\pi}{2} \leq \angle A_1 < \frac{7\pi}{12}$. 
Then $\mathcal{C} \cap \mathcal{S} = Z_1W_1VW_2Z_2$, where $Z_1 := \mathcal{C} \cap [C_{1}D_{1}]$, $Z_2 := \mathcal{C} \cap [C_{2}D_{2}]$, $W_1$ and $W_2$ are energetic points of degree two, and $V$ is a Steiner point.
\end{lemma}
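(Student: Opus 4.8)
The plan is to analyze the polygonal chain $\mathcal{C}\cap\mathcal{S}$ combinatorially, using the turning bound $\turn(\mathcal{C}\cap\mathcal{S}) = \pi - \angle A_1 + o_r(1)$ together with the angle constraints on $\mathcal{C}$ (each vertex angle $\geq 2\pi/3$) and the classification of non-energetic points. First I would argue that $\mathcal{C}\cap\mathcal{S}$ is a nondegenerate polygonal chain whose endpoints are exactly $Z_1 \in [C_1D_1]$ and $Z_2 \in [C_2D_2]$: indeed $\mathcal{C}$ is a convex polygon around $P_r$ passing through the neighborhood of $A_1$, and by Corollary~\ref{CorKar} and the construction of $\mathcal{S}$, $\mathcal{C}$ enters and exits $\mathcal{S}$ through the two segments $[C_1D_1]$, $[C_2D_2]$ (the long sides of $\mathcal{C}$ being almost parallel to the sides of $M$ by the reflection-angle computation preceding the lemma). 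The inner vertices of the chain are then either Steiner points (branching points, adjacent to exactly three segments with pairwise angles $2\pi/3$, hence \emph{not} vertices of the convex cycle $\mathcal{C}$ in the way we need — so along $\mathcal{C}$ a Steiner point looks like a vertex of angle $2\pi/3$), energetic points of degree $2$, or non-energetic points of degree $2$ lying in the interior of a segment (which are not genuine vertices). So the genuine vertices of $\mathcal{C}\cap\mathcal{S}$ are energetic points of degree $2$ and Steiner points, each contributing a turn in $(0, \pi/3]$.

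Next I would bound the number of vertices. Since each interior vertex of the chain contributes turning at most $\pi/3$ and $\turn(\mathcal{C}\cap\mathcal{S}) = \pi - \angle A_1 + o_r(1)$, and since $\angle A_1 < 7\pi/12$ gives $\turn < \pi - \pi/2 + o_r(1)$... wait, more carefully: $\angle A_1 \geq \pi/2$ gives $\turn \leq \pi/2 + o_r(1)$, so at least two interior vertices are needed (one vertex gives at most $\pi/3 < \pi/2$); and $\angle A_1 < 7\pi/12$ gives $\turn > 5\pi/12 + o_r(1)$. Hmm, the bound $\turn \le \pi/3$ per vertex forces at least $\lceil (5\pi/12)/(\pi/3)\rceil = 2$ vertices from below, but to rule out more than two I would use that a Steiner point contributes \emph{exactly} $\pi/3$ while $\angle A_1 > \pi/2$ forces total turn $< \pi/2$, so there can be at most one Steiner point, and if there were three or more vertices the total turn combined with Lemma~\ref{lm:energetic_neighbours} (no two adjacent degree-$2$ energetic points) would be incompatible. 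Concretely: the sequence of interior vertices alternates in type subject to Lemma~\ref{lm:energetic_neighbours} forbidding two consecutive energetic degree-$2$ points joined by an edge of $\Sigma'$, so between any two energetic degree-$2$ vertices there must be a Steiner point; with at most one Steiner point we get at most two energetic degree-$2$ vertices, hence at most three interior vertices total, and the pattern must be $E, S, E$ (energetic, Steiner, energetic) — the cases $E$, $S$, $E,S$, $S,E$, $E,E$ are excluded respectively by the lower turn bound, by a Steiner point needing three incident segments (the third would have to leave $\mathcal{S}$, contradicting that $\mathcal{C}\cap\mathcal{S}$ accounts for the whole intersection near $A_1$) or by Lemma~\ref{lm:energetic_neighbours}, and $E,E$ directly by Lemma~\ref{lm:energetic_neighbours}.

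The remaining point is to show the configuration is exactly $Z_1 W_1 V W_2 Z_2$ and not, say, with the Steiner point adjacent to the boundary: I would note that $Z_1$ and $Z_2$ lie on $[C_1D_1]$ and $[C_2D_2]$ which are at distance $\approx 3r$ from $A_1$ along the sides, so they are non-energetic (being on $\partial\mathcal{S}$, not dictated by a corresponding point at distance $r$ whose ball avoids $\Sigma'$) and of degree $2$ — a vertex of $\mathcal{C}$ there is forced only by $\mathcal{C}$ being a polygon, and in fact the chain continues straight through outside $\mathcal{S}$, so $Z_1, Z_2$ are endpoints of the chain, not interior vertices; the three interior vertices $W_1, V, W_2$ then sit in $\mathcal{S}\setminus M_r$ (they cannot be on $M_r$ since near $A_1$ the cycle $\mathcal{C}$ is a chord cutting across, at distance $\Theta(r)$ from the boundary). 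The hard part will be the careful exclusion of the spurious patterns — in particular ruling out a single Steiner point with no energetic neighbors (which would need its three segments to all stay relevant) and ruling out four or more vertices when $\angle A_1$ is close to $\pi/2$, where the turn budget $\pi/2 + o_r(1)$ is only barely above $\pi/3$; here one must use that the $o_r(1)$ error is genuinely small for $r < r_0(M)$ and that a Steiner point's contribution is pinned at exactly $\pi/3$, leaving $<\pi/6$ for all energetic vertices combined, which combined with the lower bound $\turn > 5\pi/12$ forces precisely one Steiner point and precisely the nonzero minimal number of energetic points consistent with Lemma~\ref{lm:energetic_neighbours}, namely two.
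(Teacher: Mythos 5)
Your combinatorial setup (turning budget $\pi-\angle A_1+o_r(1)$, turn at most $\pi/3$ per vertex of the convex chain, at most one Steiner point, and Lemma~\ref{lm:energetic_neighbours} to forbid adjacent energetic degree-2 vertices) matches the first half of the paper's proof and correctly reduces the problem to: at least two interior vertices, at most one Steiner point, at most one energetic vertex on each side of it. However, there is a genuine gap at the decisive step, namely the exclusion of the two-vertex pattern $\mathcal{C}\cap\mathcal{S}=Z_1W_1VZ_2$ (one energetic point and one Steiner point). You dismiss this by claiming that the third segment incident to the Steiner point ``would have to leave $\mathcal{S}$, contradicting that $\mathcal{C}\cap\mathcal{S}$ accounts for the whole intersection near $A_1$.'' That is not true: $\Sigma'\cap\mathcal{S}$ is strictly larger than $\mathcal{C}\cap\mathcal{S}$ in general, and the third branch of a Steiner point on $\mathcal{C}$ points \emph{into} the region between $\mathcal{C}$ and the corner $A_1$ — indeed, in the actual minimizer the Steiner point $V$ has exactly such a third branch $[VQ]$ running towards $A_1$, entirely inside $\mathcal{S}$, needed to cover the corner. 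So no contradiction arises from the mere existence of that branch, and the pattern $Z_1W_1VZ_2$ cannot be ruled out by bookkeeping of the chain alone.

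Ruling out this configuration is where the bulk of the paper's proof lives and where the hypotheses $\pi/2\le\angle A_1<7\pi/12$ are genuinely used (in your write-up they only enter through the turning budget, which is a symptom of the missing content). The paper takes the maximal segment $[VT]\subset\Sigma'$ at $V$ distinct from $[VZ_2]$ and $[VW_1]$ and analyzes the possibilities for $T$: $T$ cannot be a branching point (the region cut off by $[VT]$, $M$ and $[VZ_2]$ would have to contain an energetic point); if $T$ is energetic of degree 2, then $y(T)=y(W_1)$, collinearity of $T$, $A_1$ and a covering point $Z\in\partial B_r(A_1)$, the angle sum in the quadrilateral $y(W_1)W_1VT$, the turning identity, and the first-order optimality condition of Proposition~\ref{diffproposition} combine to give $\cos\alpha=1-\cos(\angle A_1+o_r(1))$, impossible for $\angle A_1>\pi/2$, with a separate angle-sum contradiction when $\angle A_1=\pi/2$; if $T$ has degree 1, a similar computation yields $\beta\le 2\angle A_1-2\pi/3<\pi/2$ (using $\angle A_1<7\pi/12$) against $\beta\ge\pi/2$ forced by $B_r(y(W_1))\cap\Sigma'=\emptyset$. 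None of this variational/trigonometric analysis is present in your proposal, so as written the proof does not establish the lemma.
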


\begin{proof}
Since $\turn \mathcal{C} \cap \mathcal{S} = \pi - \angle A_1 + o_r(1) < \frac{\pi}{2}$, $\mathcal{C} \cap \mathcal{S}$ contains at most one branching point. From the other hand, 
$\turn \mathcal{C} \cap \mathcal{S} > \frac{\pi}{3}$, so polygonal chain $\mathcal{C} \cap \mathcal{S}$ contains at least two inner vertices. 
By Lemma~\ref{lm:energetic_neighbours} they cannot all be energetic, so there is exactly one Steiner point $V \in \mathcal{C} \cap \mathcal{S}$.

By Lemma~\ref{lm:energetic_neighbours} there is at most one vertex of polygonal chain $\mathcal{C} \cap \mathcal{S}$ on the both sides of $V$.
Since we have at least two vertices the only case (up to symmetry) we have to deal with is $\mathcal{C} \cap \mathcal{S} = Z_1W_1VZ_2$, where $W_1$ is an energetic point of degree two, and $V$ is a Steiner point.

\begin{figure}[h]
    \centering
    \hfill
    \begin{tikzpicture}[line cap=round,line join=round,>=triangle 45,x=1cm,y=1cm, scale=3]
        
        \coordinate(W1) at (1.207, 0.966);
        \coordinate(yW1) at (.95, 0);
        \coordinate(V) at (1.014, 1.077);
        \coordinate(T) at (0.678, 0.969);
        \coordinate(Z) at (0.4467, 0.8947);

        \draw[shift={(yW1)}] (1, 0) arc (0:155:1);
        \draw (1, 0) arc (0 : 90 : 1);
        
        \draw[dashed] (yW1) -- (1.2841, 1.2558);
        \draw[dashed] (0,0) -- (T);
        \draw[dashed] (yW1) -- (T);
        
        \draw[very thick] (2.5, 0) -- (0, 0) -- (0, 1.5);
        \draw[blue, very thick] (2.5, 0.966) -- (W1) -- (V) -- (1.014, 1.5);
        \draw[blue, very thick] (V) -- (T);
        \draw[blue, dashed, very thick] (T) -- (Z);

        \def\nodesize{0.02};
        
        \fill[blue] (W1) circle (\nodesize);
        \fill[blue] (V) circle (\nodesize) node[black, above right]{$V$};
        \draw[shift={(W1)}] (-0.05, -0.05) node[black, below right]{$W_1$};
        \fill[blue] (T) circle (\nodesize) node[black, above]{$T$};
        \fill[blue] (Z) circle (\nodesize) node[black, above]{$Z$};
        \fill[black] (yW1) circle(\nodesize) node[below]{$y(W_1)$};
        \fill[black] (0,0) circle(\nodesize) node[below]{$A_1$};
        \draw (0.2, 0.1) node{$\alpha$};
        \draw[shift={(yW1)}] (-0.15, 0.1) node{$\beta$};
        
        \def\rr{0.15};
        \draw[shift={(yW1)}] (\rr, 0) arc (0 : 75 : \rr);
        \draw[shift={(yW1)}] (0.85*\rr, 0) arc (0 : 75 : 0.85*\rr);
        \draw[shift={(W1)}] (0.2588*0.65*\rr, 0.966*0.65*\rr) arc (75 : 150 : 0.65*\rr);
        \draw[shift={(W1)}] (0.2588*0.8*\rr, 0.966*0.8*\rr) arc (75 : 150 : 0.8*\rr);
        \draw[shift={(W1)}] (\rr, 0) arc (0 : 75 : \rr);
        \draw[shift={(W1)}] (0.85*\rr, 0) arc (0 : 75 : 0.85*\rr);
        \draw[shift={(yW1)}] (0.2, 0.1) node{$\gamma$};
        
        \draw[] (2.5, 0) node[below] {$A_2$};

    \end{tikzpicture}
    \hfill
    \begin{tikzpicture}[line cap=round,line join=round,>=triangle 45,x=1cm,y=1cm, scale=3]

        \coordinate(W1) at (1.18, 0.9656);
        \coordinate(yW1) at (.92, 0);
        \coordinate(V) at (0.89, 1.134);
        \coordinate(T) at (0.46, 0.888);
        \coordinate(T1) at (0, .625);
        
        \def\oblastj{(V) -- (T1) -- (0, 1.5) -- (0.89, 1.5) -- cycle}
        \draw[pattern color=gray!30!white, pattern=sixpointed stars] \oblastj;
        \draw[very thick,white] \oblastj;
        
        \draw[dotted] (T) -- (T1);

        \draw[shift={(yW1)}] (1, 0) arc (0:155:1);
        \draw (1, 0) arc (0 : 90 : 1);
        
        \draw[dashed] (yW1) -- (1.258, 1.25528);
        \draw[dashed] (0,0) -- (T);
        \draw[dashed] (yW1) -- (T);
        
        \draw[very thick] (2.5, 0) -- (0, 0) -- (0, 1.5);
        \draw[blue, very thick] (2.5, 0.9656) -- (W1) -- (V) -- (0.89, 1.5);
        \draw[blue, very thick] (V) -- (T);

        \def\nodesize{0.02};
        
        \fill[blue] (W1) circle (\nodesize);
        \fill[blue] (V) circle (\nodesize) node[black, above right]{$V$};
        \draw[shift={(W1)}] (-0.05, -0.05) node[black, below right]{$W_1$};
        \fill[blue] (T) circle (\nodesize);
        \fill[white, shift={(T)}] (0, .1) circle(0.08) node[black]{$T$};
        \fill[black] (yW1) circle(\nodesize) node[below]{$y(W_1)$};
        \fill[black] (0,0) circle(\nodesize) node[below]{$A_1$};
        
        \def\rr{0.15};
        \draw[shift={(yW1)}] (\rr, 0) arc (0 : 75 : \rr);
        \draw[shift={(yW1)}] (0.85*\rr, 0) arc (0 : 75 : 0.85*\rr);
        \draw[shift={(W1)}] (0.2588*0.65*\rr, 0.966*0.65*\rr) arc (75 : 150 : 0.65*\rr);
        \draw[shift={(W1)}] (0.2588*0.8*\rr, 0.966*0.8*\rr) arc (75 : 150 : 0.8*\rr);
        \draw[shift={(W1)}] (\rr, 0) arc (0 : 75 : \rr);
        \draw[shift={(W1)}] (0.85*\rr, 0) arc (0 : 75 : 0.85*\rr);
        \draw[shift={(yW1)}] (0.2, 0.1) node{$\gamma$};
        
        \fill[white,shift={(T)}] (0.11, -0.03) circle (0.07) node[black]{$\beta$}; 
        
    \end{tikzpicture}
    \hfill\phantom{}
%
%
%
    \caption{Cases in Lemma~\ref{lm:CycleInAngle}}
    \label{fig:absurd_case}
\end{figure}

Let $[VT]$ be the maximal segment of $\Sigma' \cap \mathcal{C}$, containing $V$ and different from $[VZ_2]$ and $[VW_1]$. Then $T$ cannot be a Steiner point, otherwise the depicted figure (bounded by $VT$, $M$ and $VZ_2$, see the right part of Fig.~\ref{fig:absurd_case}) contains an energetic point, which is an absurd.

Suppose that $T$ is an energetic point of degree 2. Then no $y(T)$ can lie on $[A_1A_n]$, so all $y(T)$ belong to $[A_1A_2]$. 
If $T \notin B_r(y(W_1))$ then the neighbourhood of $y(W_1)$ cannot be covered by $\Sigma'$. Indeed, consider $y(T)$ with the smallest distance from $y(W_1)$.
Then $\Sigma' \cap \Int (y(T) T V W_1 y(W_1)) = \emptyset$, which implies that $]y(W_1)y(T)[$ is not covered by $\Sigma'$, so $y(W_1) = y(T)$.

Recall that $\partial B_r(A_1) \cap \Sigma' \neq \emptyset$, and consider an arbitrary point $Z \in \partial B_r(A_1) \cap \Sigma'$. Note that $[ZT] \cup [TV] \cup \mathcal {C}$ covers $M \cap \mathcal{S}$, so $T$, $Z$, $A_1$ lie on the same line. Denote $\angle TA_1y(W_1)$ by $\alpha$, $\angle Ty(W_1)A_1$ by $\beta$ and $\angle W_1y(W_1)A_2$ by $\gamma$. Since $T$, $Z$, $A_1$ are collinear, $\angle A_1Ty(W_1) = \angle y(W_1)TV = \pi - \alpha - \beta$.
Sum of the angles in quadrilateral $y(W_1)W_1VT$ gives that
\[
2\pi = \angle y(W_1) + \angle W_1 + \angle V + \angle T = (\pi - \beta - \gamma) + (\pi - \gamma) + \frac{2\pi}{3} + (\pi - \alpha - \beta),
\]
so
\[
\alpha + 2\beta + 2 \gamma = \frac{5\pi}{3}.
\]
One can find turning of $\mathcal C \cap \mathcal{S}$ in two ways:
\[
\turn \mathcal C \cap \mathcal{S} = \pi - \angle A_1 + o_r(1) = \frac{\pi}{3} + (\pi - 2\gamma),
\]
so
\[
\alpha + 2\beta + \angle A_1 + o_r(1)= \frac{4\pi}{3}.
\]
Proposition~\ref{diffproposition} gives that 
\[
2\cos^2 \gamma = 2 \cos \beta \cos (\alpha + \beta). 
\]
Transform the second equation
\[
1 + \cos (2\gamma) = 2\cos^2 \gamma = 2 \cos \beta \cos (\alpha + \beta) = \cos \alpha + \cos (\alpha + 2\beta).
\]
Then
\[
\cos \alpha = 1 + \cos (2\gamma) - \cos (\alpha + 2\beta) = 1 + \cos \left (\frac{\pi}{3} + \angle A_1 +o_r(1) \right) - \cos \left (\frac{4\pi}{3} - \angle A_1 + o_r(1) \right ) =
\]
\[
1 - 2 \sin \frac{5\pi}{6}\sin \left (\frac{\pi}{2} - \angle A_1 + o_r(1) \right ) = 1 - \cos (\angle A_1 + o_r(1)).
\]
So for $\angle A_1 > \frac{\pi}{2}$ we have $\cos \alpha > 1$; if $\angle A_1 = \frac{\pi}{2}$, then $\alpha = o_r(1)$, so $T$ is $o(r)$-close to the side $A_1A_2$. Then
consider the angles in quadrilateral $TVW_1y(W_1)$: $\angle T$, $\angle W_1$ are at least $\pi/2$, $\angle y(W_1) > \pi/2 + o_r(1)$ and $\angle V = 2\pi/3$, which gives sum strictly greater than $2\pi$.

Finally $T$ may be a point of degree $1$. This means that $T \in \partial B_r (A_1) \cap \partial B_r (W_1)$. Note that $\angle A_1Ty(W_1) + \angle y(W_1)TV \geq \pi$ otherwise one can replace $[VT]$ with $[VT']$, where $T' := [A_1V] \cap \partial B_r(A_1)$ this replacement does not change the energy and decreases the length of $\Sigma'$.

Denote $\angle C_1y(W_1)W_1$ by $\alpha$ and $\angle y(W_1)TV$ by $\beta$.  
Consider the sum of angles in quadrilateral $W_1VTy(W_1)$:
\[
\angle W_1y(W_1)T = 2\pi - \angle W_1 - \angle V - \angle T = 2\pi - (\pi - \alpha) - \frac{2\pi}{3} - \beta  = \frac{\pi}{3} + \alpha - \beta.
\]
So 
\[
\angle y(W_1)A_1T = \angle A_1y(W_1)T = \pi - \angle C_1y(W_1)W_1 - \angle W_1y(W_1)T = \pi - \alpha - \left (\frac{\pi}{3} + \alpha - \beta \right ) = \frac{2\pi}{3} + \beta - 2\alpha.
\]
Consider the sum of angles in triangle $A_1y(W_1)T$:
\[
\angle y(W_1)TA_1 = \pi - \angle y(W_1)A_1T - \angle A_1y(W_1)T = 4\alpha - 2\beta - \frac{\pi}{3}.
\]
One can find turning of $\mathcal C \cap \mathcal{S}$ in two ways:
\[
\turn \mathcal C \cap \mathcal{S} = \pi - \angle A_1 = \frac{\pi}{3} + (\pi - 2\alpha),
\]
so $4\alpha = 2\pi/3 + 2\angle A_1$.

Recall that $\angle A_1TW_1 + \angle W_1TV \geq \pi$, it implies
\[
\left(4\alpha - 2\beta - \frac{\pi}{3}\right) + \beta \geq \pi, 
\]
so $\beta \leq 2\angle A_1 - 2\pi/3 < \pi/2$ since $\angle A_1 < 7\pi/12$.
From the other hand $B_r (y(W_1)) \cap \Sigma' = \emptyset$, hence $\beta \geq \pi/2$; contradiction.

\end{proof}

\section{Application of computational methods}
\label{Application}

In this section $M$ is a rectangle $A_1\dots A_4$. Define the following orthogonal coordinate system: $A_1$ is the origin point, $[A_1,A_2)$ is the $X$-axis, and $[A_1,A_4)$ is the $Y$-axis. 

Note that the intersections $\Sigma' \cap l^{i,i+1}$ are segments, parallel to $A_{i}A_{i+1}$.
Otherwise reflect a part $\Sigma' \cap \alpha_j$ of minimal length from perpendicular bisectors to the sides of $M$; denote the resulting set by $\sigma$. The set $\sigma'$ is obtained by connecting the components of $\sigma$ by segments, parallel to $A_{i}A_{i+1}$. By the choice of $j$ the length of $\sigma' \cap \left (\cup \alpha_i \right)$ is at most $\H (\Sigma' \cap \left (\cup \alpha_i \right))$ so $\H \left (\Sigma' \cap \left (\cup l^{i,i+1} \right) \right) \leq \H \left (\sigma' \cap \left (\cup l^{i,i+1} \right) \right)$ which implies the desired condition.

\begin{lemma}
\label{disjointcircles}
Let $W_1,W_2$ be defined as in Lemma~\ref{lm:CycleInAngle}. (Recall that by Lemma~\ref{uniqueYforC} $y(W_1)$ and $y(W_2)$ are defined uniquely.) Then
\[
B_r(y(W_1)) \cap B_r(y(W_2)) = \emptyset.
\]
\end{lemma}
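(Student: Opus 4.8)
The plan is a proof by contradiction, combining property~(a) of corresponding points (the open disks $B_r(y(W_1)),B_r(y(W_2))$ are disjoint from $\Sigma'$) with the local description of $\mathcal{C}$ near $A_1$ from Lemma~\ref{lm:CycleInAngle} and the angle constraints it carries. First I would locate $y(W_1),y(W_2)$: they are unique by Lemma~\ref{uniqueYforC}, and I claim they lie on the two distinct sides of $M$ incident to $A_1$, say $y(W_1)\in[A_1A_2]$ and $y(W_2)\in[A_1A_4]$, with $y(W_i)\neq A_1$. If $y(W_1),y(W_2)$ lay on the same side, the two $\Sigma'$-free disks would force the convex polygon $\mathcal{C}$ to cross the line through $U_i:=\partial B_r(y(W_i))\cap M_r$ at two points of $[U_1U_2]$; but $|U_1U_2|\le 2r$ while the side of $P_r$ on that line has length $>2r$ for $r<r_0(M)$, so $\mathcal{C}$ could not surround $P_r$ — the same contradiction as in the proofs of Lemmas~\ref{uniqueYforC} and~\ref{lm:energetic_neighbours}. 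And $y(W_i)=A_1$ would give $|A_1W_i|\le r$, which (by the angle count in the quadrilateral $A_1y_1(W_i)W_iy_2(W_i)$, as in Lemma~\ref{uniqueYforC}) forces an interior angle $<2\pi/3$ of $\mathcal{C}$ at $W_i$, impossible.

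Now suppose $B_r(y(W_1))\cap B_r(y(W_2))\neq\emptyset$, i.e. $\delta:=|y(W_1)y(W_2)|<2r$. An elementary check — a point $(1-s)y(W_1)+s\,y(W_2)$ lies in one of the two disks as soon as $s<r/\delta$ or $s>1-r/\delta$, and $r/\delta>\tfrac12$ — shows that the whole closed segment $[y(W_1)y(W_2)]$ lies in $U:=B_r(y(W_1))\cup B_r(y(W_2))$, hence is disjoint from $\Sigma'$. Together with $[A_1y(W_1)]\subset[A_1A_2]$ and $[A_1y(W_2)]\subset[A_1A_4]$ this segment bounds the corner triangle $\Delta:=\triangle A_1y(W_1)y(W_2)\subseteq M$.

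Next I would show $\Sigma'\cap\overline\Delta=\emptyset$. The vertices $W_1,W_2$ lie strictly outside $\overline\Delta$: by Lemma~\ref{lm:energetic_points_angles}(ii) the ray $[W_iy(W_i))$ is essentially normal to the side through $y(W_i)$, so $W_i$ sits on the far side of the hypotenuse $[y(W_1)y(W_2)]$. By Lemma~\ref{lm:CycleInAngle} the part of $\Sigma'$ inside $\mathcal{S}$ is the chain $Z_1W_1VW_2Z_2$ together with the single branch $\tau$ issued from the Steiner point $V$ towards $A_1$. This set cannot meet $]y(W_1)y(W_2)[\subset U$, cannot cross $\partial N=\partial M$, and does not run along $\partial M$ (a short projection strictly shortens $\Sigma'$); hence it never crosses $\partial\Delta$ and stays in the complement of $\overline\Delta$. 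In particular $V\notin\overline\Delta$, and the branch $\tau$, being connected to $V$, is trapped outside $\overline\Delta$.

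Finally, the contradiction. Every point of $M$ near $A_1$ that is \emph{not} in $\overline\Delta$ (the parts of the two sides beyond $y(W_1)$, resp. $y(W_2)$, and the region past $P_r$) is $r$-covered by the chain $Z_1W_1VW_2Z_2$ together with the long side segments of $\Sigma'$; so $\tau$ can only be useful for covering $M\cap\overline\Delta$, which it cannot reach. Thus either $\tau$ covers nothing, and deleting it gives a strictly shorter connected competitor still carrying a cycle around $P_r$, contradicting the minimality of $\Sigma'$ in Problem~\ref{TheProblemCycle}; or some point of $M\cap\overline\Delta$ — for the rectangle, a point of one of the two sides between $A_1$ and $y(W_i)$ — is at distance $>r$ from all of $\Sigma'$, contradicting $F_M(\Sigma')\le r$. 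Deciding which alternative occurs (and in particular ruling out that $V$ together with $[W_1V],[VW_2]$ already $r$-covers $M\cap\mathcal{S}$, so that no corner point is left uncovered) forces the configuration to be described quantitatively through the Steiner relation $\angle W_1VW_2=2\pi/3$, the bisector identities and the inequalities $\angle y(W_i)W_iZ_i,\angle y(W_i)W_iV\ge\pi/2$ at $W_1,W_2$, and the turning identity $\turn(\mathcal{C}\cap\mathcal{S})=\pi-\angle A_1+o_r(1)$; these pin the positions of $W_1,V,W_2$ tightly enough that the hypothesis $\delta<2r$ becomes untenable. I expect this last quantitative step — essentially a trigonometric argument in the spirit of the computations in Lemma~\ref{lm:CycleInAngle}, with careful tracking of constants — to be the main obstacle, the geometric part (Steps on $\overline\Delta$ above) being comparatively routine.
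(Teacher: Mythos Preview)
Your overall geometric setup is right—locating $y(W_1),y(W_2)$ on distinct sides, observing that $[y(W_1)y(W_2)]\subset B_r(y(W_1))\cup B_r(y(W_2))$ is $\Sigma'$-free, and hence that $\Sigma'$ cannot cross the hypotenuse of $\Delta=\triangle A_1y(W_1)y(W_2)$. But the ``last quantitative step'' you flag as the main obstacle is precisely where the paper's argument is short and decisive, and where your plan goes astray.

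You try to argue structurally that $\Sigma'\cap\overline\Delta=\emptyset$ and then push towards ``$A_1$ is uncovered''; you then concede you cannot easily rule out that $V$ or nearby pieces of $\mathcal{C}$ cover $A_1$ from outside $\Delta$, and propose a trigonometric analysis to finish. The paper avoids all of this by reversing the logic. Take any $Q\in\Sigma'$ with $|QA_1|\le r$ (such $Q$ exists since $A_1$ is covered). Since $Q\notin B_r(y(W_i))$ we have $|Qy(W_i)|\ge r\ge |QA_1|$, hence in the triangles $A_1y(W_i)Q$ the angle at $A_1$ dominates the angle at $y(W_i)$. Summing angles in the quadrilateral $A_1\,y(W_1)\,Q\,y(W_2)$ (with $\angle A_1=\pi/2$) gives
\[
2\pi=\angle A_1+\angle y(W_1)+\angle y(W_2)+\angle Q\le 2\angle A_1+\angle Q=\pi+\angle Q,
\]
so $\angle y(W_1)Qy(W_2)\ge\pi$. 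Equality forces tangency of the two disks and we are done; strict inequality forces $Q$ to lie inside $\Delta$. But then the connected set $\Sigma'\subset N$ contains both $Q\in\Int\Delta$ and points of $\mathcal{C}$ outside $\Delta$, yet cannot cross $[y(W_1)y(W_2)]$ (blocked by the two disks) nor leave $N$ through the legs of $\Delta$ on $\partial N$—contradiction.

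So the missing idea is this one-line angle-sum, which shows directly that \emph{any} point of $\Sigma'$ covering $A_1$ must lie in $\Delta$. Once you have that, no structural analysis of $\tau$, no case split ``$\tau$ useless vs.\ corner uncovered'', and no trigonometry are needed.
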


\begin{proof}
Let $Q \in \Sigma'$ be a point that covers $A_1$, hence $|QA_1| \leq r$. From the other hand $B_r(y(W_1)) \cap \Sigma' = B_r(y(W_2)) \cap \Sigma' = \emptyset$, so $|y(W_1)Q|, |y(W_2)Q| \geq r$. Then $\angle y(W_1)A_1Q \geq \angle A_1y(W_1)Q$, $\angle y(W_2)A_1Q \geq \angle A_1y(W_2)Q$. Consider the sum of angles in quadrilateral $A_1y(W_1)Qy(W_2)$:
\[
2\pi = \angle A_1 + \angle y(W_1) + \angle y(W_2) + \angle Q =  \angle y(W_1)A_1Q + \angle y(W_2)A_1Q + \angle A_1y(W_1)Q + \angle A_2y(W_2)Q + \angle Q \leq 2\angle A_1 + \angle Q = \pi + \angle Q.
\]
So $\angle Q \geq \pi$. If $\angle Q = \pi$ then $B_r(y(W_1))$, $B_r(y(W_2))$ are tangent, which implies the conclusion of lemma. 

Otherwise $\angle Q > \pi$, which implies that $Q$ lies inside triangle $A_1y(W_1)y(W_2)$. Suppose the contrary, i.e. $B_r(y(W_1)) \cap B_r(y(W_2)) \neq \emptyset$. Then  $\Sigma'$ cannot intersect the side $[y(W_1)y(W_2)]$ of triangle $A_1y(W_1)y(W_2)$. Since $\Sigma' \subset N$ we got a contradiction.

\end{proof}


Recall that Lemma~\ref{lm:CycleInAngle} states that $\mathcal{C} \cap \mathcal{S}=Z_1W_1VW_2Z_2$.
By Lemma~\ref{uniqueYforC}, points $y(W_1)$ and $y(W_2)$ are uniquely determined.
Consider the following parameters:
\[
\alpha := \frac{\angle Z_1W_1V}{2}, \quad x := \dist (A_1, y(W_1)), \quad y := \dist (A_1, y(W_2)).
\]
Define also
\[
\beta := \frac{\angle VW_2Z_2}{2}.
\]

One can find turning of $\mathcal C \cap \mathcal{S}$ in two ways:
\[
\turn \mathcal C \cap \mathcal{S} = \pi - \angle A_1 = (\pi - \angle W_1) + (\pi - \angle V) + (\pi - \angle W_2) = (\pi - 2\alpha) + \frac{\pi}{3} + (\pi - 2\beta).
\]
Hence
\[
2\alpha + 2\beta = \frac{4 \pi}{3} + \angle A_1, \quad \mbox{ so } \quad \beta = \frac{2 \pi}{3} + \frac{\angle A_1}{2} - \alpha = \frac{2 \pi}{3} + \frac{\pi}{4} - \alpha = \frac{11 \pi}{12} - \alpha.
\]
By Lemma~\ref{lm:CycleInAngle} and the fact that $\alpha, \beta \leq \frac{\pi}{2}$,
\[
x,y\in\left[0,\left(\frac{4}{\sqrt{6}} + 2 + 1\right)  r \right], \quad \alpha\in\left[\frac{5 \pi}{12},\frac{\pi}{2}\right]. 
\]
Since $\Sigma'\cap B_r(y(W_1))=\emptyset$, the left neighborhood of $y(W_1)$ is covered by some point $Q_1\neq W_1$ (see Fig.~\ref{minimizer90}): 
\[
Q_1\in\Sigma',\quad \dist(y(W_1),Q_1)=r,\quad \angle A_1y(W_1)Q_1\leq \frac{\pi}{2}.
\]
Similarly,
\[
Q_2\in\Sigma',\quad \dist(y(W_2),Q_2)=r,\quad \angle A_1y(W_2)Q_2\leq \frac{\pi}{2}.
\]
Finally, there is a point in $\Sigma'$ at a distance at most $r$ from $A_1$, so there is a point in $\Sigma'$ at a distance exactly $r$ from $A_1$:
\[
Q\in\Sigma',\quad\dist(Q,A_1)=r.
\]
Note that some of the points $Q$, $Q_1$ and $Q_2$ may coincide (in particular, some of them coincide for the optimal $\Sigma'$).

\begin{figure}
    \centering
    \begin{tikzpicture}[line cap=round,line join=round,>=triangle 45,x=1cm,y=1cm, scale = 2]
    
        \coordinate (V) at (1.4511769417267553,1.2941342560758087);
        \coordinate (W1) at (2.8223786956631924,0.9942091784463614);
        \coordinate (W2) at (0.9881455681973705,2.748339302798502);
    
        \draw [shift={(2.7149165651566225,0)},] (0:0.2558385377085195) arc (0:83.83096092621358:0.2558385377085195);
        
        \draw [shift={(W1)},] (0:0.22) arc (0:83.8309609262136:0.22);
        \draw [shift={(W1)},] (83.8309609262136:0.2558385377085195) arc (83.8309609262136:167.66192185242716:0.2558385377085195);

        \draw [shift={(0,2.5948194798051945)},] (8.830960926213633:0.2558385377085195) arc (8.830960926213633:90:0.2558385377085195);
        \draw [shift={(0,2.5948194798051945)},] (8.830960926213633:0.20040685453834028) arc (8.830960926213633:90:0.20040685453834028);
        \draw [shift={(W2)},] (8.83096092621364:0.2558385377085195) arc (8.83096092621364:90:0.2558385377085195);
        \draw [shift={(W2)},] (8.83096092621364:0.20040685453834028) arc (8.83096092621364:90:0.20040685453834028);
        \draw [shift={(W2)},] (-72.33807814757273:0.21319878142376622) arc (-72.33807814757273:8.830960926213644:0.21319878142376622);
        \draw [shift={(W2)},] (-72.33807814757273:0.157767098253587) arc (-72.33807814757273:8.830960926213644:0.157767098253587);
        
        \draw [very thick] (0,4.5)-- (0,0);
        \draw [very thick] (5,0)-- (0,0);
        \draw [] (1,0) arc (0 : 90 : 1);
        \draw [] (0,1.5948194798051945) arc (-90 : 90 : 1);
        \draw [] (3.7149165651566225,0) arc (0 : 180 : 1);
        \draw [,dashed] (0.5257398542969663,0.8506454053269227)-- (0,0)
            node[pos=0.4,below right]{$r$};
        \draw [thick, blue] (0.9881455681973705,4.043597731705948)-- (W2);
        \draw [thick, blue] (W2)-- (V);
        \draw [thick, blue] (V)-- (W1);
        \draw [thick, blue] (W1)-- (4.430166711325904,0.9942091784463614);
        \draw [,dashed] (0,2.5948194798051945)-- (W2)
            node[pos=0.5, below]{$r$};
        \draw [,dashed] (W2)-- (1.560382488301416,2.837242915294451);
        \draw [,dashed] (2.7149165651566225,0)-- (W1)
            node[pos=0.5, left]{$r$};
        \draw [,dashed] (W1)-- (2.891073104860997,1.629750385478656);
        \draw [] (4.430166711325904,1)-- (4.430166711325904,0);
        \draw [] (1,4.043597731705948)-- (0,4.043597731705948);
        \draw [thick, blue,dashed] (1.2367685238557085,1.0588168446924053)-- (1.0328898481033684,0.8350560302118302);
        \draw [thick, blue] (1.2367685238557085,1.0588168446924053)-- (V);
        
        \draw [decorate , decoration={brace, mirror, amplitude=3pt},      
            xshift=0pt,yshift=-4pt]
        (0, 0) -- (2.7149165651566225,0) node [midway,yshift=-0.5cm] {$x$};
        \draw [decorate , decoration={brace, amplitude=3pt},      
            xshift=-4pt,yshift=0pt]
        (0, 0) -- (0,2.5948194798051945) node [midway,xshift=-0.5cm] {$y$};
        
        \def\nodesize{0.03}
        
        \fill [] (0,0) circle (\nodesize)
            node[below left]{$A_1$};
        \fill [] (0,4.5)
            node[above]{$A_4$};
        \fill [] (5,0)
            node[right]{$A_2$};
            
        \fill [blue] (W1) circle (\nodesize)
            node[black, below left]{$W_1$};
        \draw[shift={(W1)}] (0.15, 0.05)
            node[above right]{\large $\alpha$};
        \fill [] (2.7149165651566225,0) circle (\nodesize)
            node[below right]{$y(W_1)$};

        \fill [blue] (W2) circle (\nodesize)
            node[black, above left]{$W_2$};
        \draw[shift={(W2)}] (0.1, 0.15)
            node[above right]{\large $\beta$};
        \fill [] (0,2.5948194798051945) circle (\nodesize)
            node[above left]{$y(W_2)$};

        \fill [blue] (V) circle (\nodesize)
            node[black, above right]{$V$}
            node[black, left]{$2\pi/3$};
        \draw[shift={(V)}] (0.1, -0.05) 
            node[below]{$2\pi/3$};
            
        \fill [blue] (0.9881455681973705,4.043597731705948) circle (\nodesize)
            node[black, above]{$Z_2$};
        \fill [blue] (4.430166711325904,0.9942091784463614) circle (\nodesize)
            node[black, right]{$Z_1$};
        \fill [] (0,4.043597731705948) circle (\nodesize)
            node[left]{$C_{2}$};
        \fill [] (4.430166711325904,0) circle (\nodesize)
            node[below]{$C_{1}$};
        \fill [] (1.1,4.043597731705948)
            node[right]{$D_{2}$};
        \fill [] (4.430166711325904,1.1) 
            node[above]{$D_{1}$};
            
        \fill [blue] (1.977522850, .6754631806) circle (\nodesize)
            node[black, above left]{$Q_{1}$};
            
        \fill [blue] (.3846153846, 1.671742557) circle (\nodesize)
            node[black, below right]{$Q_{2}$};
            
        \fill [blue] (0.8,0.6) circle (\nodesize)
            node[black, right]{$Q$};
    \end{tikzpicture}
    \caption{Behaviour of $\mathcal C$ in the neighbourhood of $A_1$ for a rectangle}
    \label{minimizer90}
\end{figure}
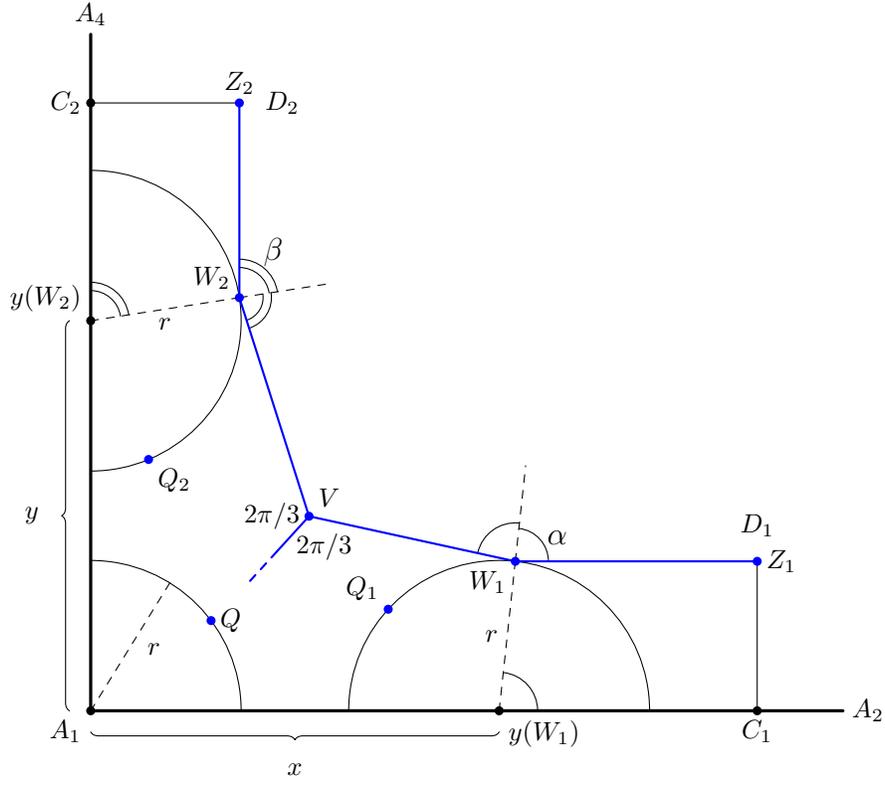

We use the following parametrization for the points $Q_1,Q_2,Q$:
\[
\xi_1=\angle A_1y(W_1)Q_1\in\left[0,\frac{\pi}{2}\right],\quad \xi_2=\angle A_1y(W_2)Q_2\in\left[0,\frac{\pi}{2}\right],\quad \xi=\angle A_2A_1Q\in\left[0,\frac{\pi}{2}\right].
\]

As we can see, each $\Sigma'$ corresponds to a set of parameters $\alpha(\Sigma'),x(\Sigma'),y(\Sigma'),\xi_1(\Sigma'),\xi_2(\Sigma'),\xi(\Sigma')$, i.e. a point $p(\Sigma')$ in the following $6$-dimensional parametric space:
\[
P=\left[0,\left(\frac{4}{\sqrt{6}} + 3\right)  r \right]\times\left[0,\left(\frac{4}{\sqrt{6}} + 3\right)  r \right]\times\left[\frac{5\pi}{12},\frac{\pi}{2}\right]\times\left[0,\frac{\pi}{2}\right]\times\left[0,\frac{\pi}{2}\right]\times\left[0,\frac{\pi}{2}\right].
\]

On the other hand, any $p\in P$ corresponds to a set of points
\[
W_1(p),V(p),W_2(p),Q_1(p),Q_2(p),Q(p),y(W_1)(p),y(W_2)(p),Z_1(p),Z_2(p);
\]
here $Z_1(p)\in C_1D_1$, $Z_2(p)\in C_2D_2$ are such points that $W_1(p)Z_1(p)$, $W_2(p)Z_2(p)$ are parallel to $A_1A_2$, $A_1A_4$; other points are defined as described above. Thus we can define 
\[
L(p)=C(p) + \H(\St(p)),
\]
where
\[
C(p) = |Z_1(p)W_1(p)|+|W_1(p)V(p)|+|V(p)W_2(p)|+|W_2(p)Z_2(p)|,
\]
$\St(p)$ is a Steiner tree on points $V(p),Q_1(p),Q_2(p),Q(p)$. 

By the definition of Steiner tree,
\[
\H(\Sigma' \cap \mathcal{S}) \geq L(p(\Sigma')).
\]

Consider a point $p_0 = (x_0, y_0, \alpha_0, \xi_{1,0}, \xi_{2,0}, \xi_0)$, where
\[
x_0=y_0=\sqrt{2} r,\ \alpha_0=\frac{11 \pi}{24}, \xi_{1,0} = \xi_{2,0} = \xi_0 = \frac{\pi}{4}.
\]
Note that $Q_1(p_0)=Q_2(p_0)=Q(p_0)$, so $\St(p_0)$ is the segment $V(p_0)Q(p_0)$ and there is a solution of Problem~\ref{TheProblemCycle} $\Sigma'_0$ such that $p(\Sigma'_0)=p_0$, $\H(\Sigma'_0 \cap \mathcal{S})=L(p_0)$. Denote $L_0=L(p_0)$.

Consider a subspace of $P$
\[
P_0 = \overline{B_\frac{r}{10}\left(x_0\right)} \times \overline{B_\frac{r}{10}\left(y_0\right)} \times \overline{B_\frac{1}{30}\left(\alpha_0\right)} \times\left[0,\frac{\pi}{2}\right]\times\left[0,\frac{\pi}{2}\right]\times\left[0,\frac{\pi}{2}\right],
\]
where $B_l(o)$ is a one-dimensional ball $(o-l, o+l)$.

We use computer search to show that if $p\in P \setminus P_0$, then either $L(p)\geq L_0$, or $p\neq p(\Sigma')$ for any minimizer $\Sigma'$ (in the latter case we will call such $p$ \textit{unobtainable}). Further we show that if $p(\Sigma') \in P_0$ for some minimizer $\Sigma'$ and $L(p(\Sigma')) < L(p_0)$ then $\Sigma'$ has a certain specific topology. Also we provide an explicit example of such configuration.

For $c=(x_c,y_c,\alpha_c,\xi_{1,c},\xi_{2,c},\xi_c)$, $\beta_c = \frac{11 \pi}{24} - \alpha_c$, $\Delta=(\Delta x,\Delta y,\Delta \alpha,\Delta \xi_1,\Delta \xi_2,\Delta \xi)$ define a box
\[
P(c,\Delta) := \overline{B_{\Delta x}(x) \times B_{\Delta y}(y) \times B_{\Delta \alpha}(\alpha) \times B_{\Delta \xi_1}(\xi_1) \times B_{\Delta \xi_2}(\xi_2) \times B_{\Delta \xi}(\xi)}.
\]
The algorithm described below takes a box $P(c,\Delta)$ and either verifies the required conditions for all points in the box, or divides it into smaller boxes, after which the same algorithm is executed on each of these new boxes. Initially the algorithm is executed on the full parametric space $P$.

Consider a box $P(c,\Delta)$. First, if there is at least one obtainable point in the box, i.e. $p=p(\Sigma')\in P(c, \Delta)$ for some minimizer $\Sigma'$, then $B_r(y(W_1)(p)) \cap B_r(y(W_2)(p)) = \emptyset$ should hold by Lemma~\ref{disjointcircles}. By the Pythagorean theorem for the triangle $A_1y(W_1)(p)y(W_2)(p)$ we conclude that if
\[
(x_c+\Delta x)^2+(y_c+\Delta y)^2<4r^2,
\]
then all $p\in P(c, \Delta)$ are unobtainable.

Second, there is no need to consider $P(c,\Delta) \subset P_0$, i.e. $P(c,\Delta)$ which satisfies
\[
B_{\Delta x}(x_c) \subset B_\frac{r}{10}\left(x_0\right),\quad B_{\Delta y}(y_c) \subset B_\frac{r}{10}\left(y_0\right),\quad B_{\Delta \alpha}(\alpha_c) \subset B_\frac{1}{30}\left(\alpha_0\right).
\]

Suppose that neither of two conditions above hold. Then we obtain the lower bound on $L(c)$ in the following way: we calculate the coordinates of the points $Z_1(c)$, $W_1(c)$, $V(c)$, $W_2(c)$, $Z_2(c)$ and evaluate $C(c)$ explicitly; we use Melzak's algorithm~\cite{melzak1961problem} to get the lower bound on $\H(\St(c))$.

Now, there exists a function $\Err(c,\Delta)$ such that $L(p)\geq L(c)-\Err(c,\Delta)$ holds for any $p\in P(c,\Delta)$; we show how to obtain $\Err$ in Subsection~\ref{ErrorEstimation}.

If $L(c)-\Err(c,\Delta)\geq L_0$, then $L(p)\geq L_0$ for any $p\in P(c,\Delta)$. Otherwise, we split $P(c,\Delta)$ into $2^6$ boxes by dividing each of its sides into two equal halves, and consider each of these boxes in the same way.


When checking inequalities, we take into account the numerical errors by using C-XSC interval arithmetic package~\cite{hofschuster2004c},~\cite{klatte1993c}. In some cases, we are unable to check some of the inequalities in the reconstruction phase of the Melzak's algorithm; in these cases we assume that the reconstruction phase is successful. That means that our implementation of the Melzak's algorithm can sometimes return value smaller than the length of a Steiner tree on given points; this is of no consequence since we are only interested in the lower bound on this length.

Each considered box is treated as a separate task; these tasks are run in parallel. The search has finished in approximately 50 hours of real time (350 hours of total CPU time) on a 2.3 GHz Quad-Core Intel Core i5 processor. The source code can be found at~\cite{c0pymaster_minimizer}.

\subsection{Error estimation} 
\label{ErrorEstimation}

We want to obtain the inequality of the form
\[
L(p)\geq L(c)-\Err(c,\Delta)
\]
for any $p\in P(c,\Delta)$. By the definition of $L$,
\[
L(c) - L(p) \leq C(c) - C(p) + |\H(\St(c)) - \H(\St(p))|.
\]

When a terminal of a Steiner tree shifts by a vector of length no more than $\eps$, the length of a tree changes by at most $\eps$ (note that the topology of a Steiner tree may change). Using this fact, we get
\[
|\H(\St(c)) - \H(\St(p))| \leq |V(c) - V(p)| + |Q_1(c) - Q_1(p)| + |Q_2(c) - Q_2(p)| + |Q(c) - Q(p)| \leq |V(c) - V(p)| + (\Delta \xi_1 + \Delta \xi_2 + \Delta \xi) r.
\]

Denote $p=(x,y,\alpha,\xi_1,\xi_2,\xi)$, $\beta = \beta(p) = \frac{11 \pi}{12} - \alpha$. For brevity, we are going to omit the arguments and write $V$ instead of $V(p)$ (and so on). We have
\[
W_1 = (x + r\cos\alpha, r\sin\alpha), \quad W_2 = (r\sin\beta, y + r\cos\beta).
\]
So 
\[
|Z_1W_1| = \left( \frac{4}{\sqrt{6}} + 4 \right)r - x - r\cos\alpha, \quad \quad  |W_2Z_2| = \left( \frac{4}{\sqrt{6}} + 4 \right)r - y - r\cos\beta.
\]

Since $V$ is the intersection point of $(W_1V)$ and $(W_2V)$ we have
\begin{equation}
    \label{xuistem}
    V = W_1 + l_1 \cdot (\cos 2\alpha, \sin 2\alpha) = W_2 + l_2 \cdot (\sin 2\beta, \cos 2\beta)
\end{equation}
for some $l_1$ and $l_2$ (in fact, $l_1=|W_1V|$ and $l_2=|W_2V|$).
The solution of system~\eqref{xuistem} is
\[  
l_1 = -\frac{x \cos 2\beta + y \sin 2\beta + r\cos(\alpha + 2\beta) + r\sin\beta}{\cos 2(\alpha + \beta)}, \quad l_2 = -\frac{y \cos 2\alpha + x \sin 2\alpha + r\cos(2\alpha + \beta) + r\sin \alpha}{\cos 2(\alpha + \beta)}.
\]
Recall that $\beta = \frac{11\pi}{12} - \alpha$, so $2(\alpha + \beta) = \frac{11\pi}{6}$ and $\cos 2(\alpha + \beta) = \frac{\sqrt{3}}{2}$. So
\[
|W_1V| = -\frac{2}{\sqrt{3}} (x \cos 2\beta + y \sin 2\beta + r\cos(\alpha + 2\beta) + r\sin\beta), \quad |VW_2| = -\frac{2}{\sqrt{3}} (y \cos 2\alpha + x \sin 2\alpha + r\cos(2\alpha + \beta) + r\sin \alpha). 
\]
Hence
\[
|W_1V| + |VW_2| = -\frac{2}{\sqrt{3}} (x(\cos 2\beta + \sin 2\alpha) + y(\sin 2\beta + \cos 2\alpha) + r((\sin \alpha + \cos(\alpha + 2\beta)) + (\sin\beta + \cos(2\alpha + \beta))))=
\]
\[
\frac{2}{\sqrt{3}} \left(x\sin\left( 2\alpha - \frac{2\pi}{3} \right) + y\cos\left( 2\alpha - \frac{2\pi}{3} \right) -
r\left ( \cos\left( \alpha - \frac{\pi}{6} \right) + \sin\left( \alpha - \frac{\pi}{4} \right) \right) \right).
\]

Summing up
\[
C(p) = |Z_1W_1| + |W_1V| + |VW_2| + |W_2Z_2| =
\]
\[
2\left( \frac{4}{\sqrt{6}} + 4 \right)r - x - \cos\alpha - y - \cos\beta
+ \frac{2}{\sqrt{3}}\left( x \sin\left( 2\alpha - \frac{2\pi}{3} \right) + y \cos\left( 2\alpha - \frac{2\pi}{3} \right) - r\cos\left( \alpha - \frac{\pi}{6} \right) - r\sin\left( \alpha - \frac{\pi}{4} \right) \right).
\]
Then
\[
\frac{\partial C}{\partial x} = -1 + \frac{2}{\sqrt{3}} \sin\left( 2\alpha - \frac{2\pi}{3} \right), \quad \frac{\partial C}{\partial y} = -1 + \frac{2}{\sqrt{3}} \cos\left( 2\alpha - \frac{2\pi}{3} \right),
\]
and, keeping in mind that $\frac{\partial \beta}{\partial \alpha} = -1$,
\[
\frac{\partial C}{\partial \alpha} = \sin \alpha - \sin \beta + \frac{2}{\sqrt{3}}\left( 2x \cos\left( 2\alpha - \frac{2\pi}{3} \right) - 2y \sin\left( 2\alpha - \frac{2\pi}{3} \right) - \sin\left( \alpha - \frac{\pi}{6} \right) + \cos\left( \alpha - \frac{\pi}{4} \right) \right).
\]

Recall the following classical bound. Let $f: [a,b] \to \mathbb{R}^k$ be a differentiable function, then
\begin{equation}
|f(b) - f(a)|\leq (b - a) \cdot \sup_{a < x < b} |f'(x)|.
\label{meanvalue}    
\end{equation}

For a smooth function $F: P(c, \Delta) \to \mathbb{R}^k$ (in particular, for functions $C$, $V$) several applications of~\eqref{meanvalue} give
\[
|F(c)-F(p)|\leq \Delta_x F \cdot \Delta x + \Delta_y F \cdot \Delta y + \Delta_\alpha F \cdot \Delta \alpha,
\]
where
\[
\Delta_x F = \sup_{p \in P(c, \Delta)} \left|\frac{\partial F}{\partial x}(p)\right|,\quad \Delta_y F = \sup_{p \in P(c, \Delta)} \left|\frac{\partial F}{\partial y}(p)\right|,\quad \Delta_\alpha F = \sup_{p \in P(c, \Delta)} \left|\frac{\partial F}{\partial \alpha}(p)\right|.
\]

Using calculations above, we obtain the following bounds:
\begin{equation}
\Delta_x C \leq 1 - \frac{2}{\sqrt{3}} \sin\left( 2(\alpha_c - \Delta \alpha) - \frac{2\pi}{3} \right), \quad \Delta_y C \leq 1 - \frac{2}{\sqrt{3}} \cos\left( 2(\alpha_c + \Delta \alpha) - \frac{2\pi}{3} \right), \quad
\Delta_\alpha C \leq \max( F_1, F_2 ),
\label{eq-deltac}
\end{equation}
where
\[
F_1 = \sin (\alpha_c + \Delta \alpha) - \sin (\beta_c - \Delta \alpha) +
\]
\[
\frac{2}{\sqrt{3}}\left( 2(x_c + \Delta x) \cos\left( 2(\alpha_c - \Delta \alpha) - \frac{2\pi}{3} \right) - 2(y_c - \Delta y) \sin\left( 2(\alpha_c - \Delta \alpha) - \frac{2\pi}{3} \right) - \sin\left( \alpha_c - \Delta \alpha - \frac{\pi}{6} \right) + \cos\left( \alpha_c - \Delta \alpha - \frac{\pi}{4} \right) \right),
\]
\[
F_2 = -\sin (\alpha_c - \Delta \alpha) + \sin (\beta_c + \Delta \alpha) -
\]
\[
\frac{2}{\sqrt{3}}\left( 2(x_c - \Delta x) \cos\left( 2(\alpha_c + \Delta \alpha) - \frac{2\pi}{3} \right) - 2(y_c + \Delta y) \sin\left( 2(\alpha_c + \Delta \alpha) - \frac{2\pi}{3} \right) - \sin\left( \alpha_c + \Delta \alpha - \frac{\pi}{6} \right) + \cos\left( \alpha_c + \Delta \alpha - \frac{\pi}{4} \right) \right).
\]

Now, using~\eqref{xuistem}, we get
\[
\frac{\partial V}{\partial x} = \left( 1 - \frac{\cos 2\beta \cos 2\alpha}{\cos 2(\alpha + \beta)} , -\frac{\cos 2\beta \sin 2\alpha}{\cos 2(\alpha + \beta)} \right) = -\frac{\sin 2\alpha}{\cos 2(\alpha + \beta)} \cdot (\sin 2\beta, \cos 2\beta),
\]
so
\[
\Delta_x V = \frac{2}{\sqrt{3}}\sin 2(\alpha_c - \Delta \alpha).
\]
Similarly,
\[
\frac{\partial V}{\partial y} = \left( -\frac{\sin 2\beta \cos 2\alpha}{\cos 2(\alpha + \beta)} , -\frac{\sin 2\beta \sin 2\alpha}{\cos 2(\alpha + \beta)} \right) = -\frac{\sin 2\beta}{\cos 2(\alpha + \beta)} \cdot (\cos 2\alpha, \sin 2\alpha),
\]
so
\[
\Delta_y V = \frac{2}{\sqrt{3}}\sin 2(\beta_c - \Delta \alpha).
\]
Again, by~\eqref{xuistem}
\[
\frac{\partial V}{\partial \alpha} = \left( \frac{4  y \cos 4 \beta - 4  x \sin 4  \beta - 4  y + 4  \cos 3 \beta - 4  \cos\beta - \sin 3 \alpha  + \sin\left(3  \alpha + 4  \beta\right) - 3  \sin\left(\alpha + 4  \beta\right) + 3  \sin \alpha }{2  {\left(\cos\left(4  \alpha + 4  \beta\right) + 1\right)}} \right. ,
\]
\begin{equation}
    \left. -\frac{4  x \cos 4 \beta + 4 y \sin 4  \beta + 4  x - \cos \alpha  - \cos\left( 3 \alpha + 4  \beta\right) + 3  \cos\left(\alpha + 4  \beta\right) + 3  \cos \alpha + 4  \sin 3  \beta  - 4  \sin \beta }{2  {\left(\cos\left(4 \alpha + 4 \beta\right) + 1\right)}} \right).
\label{derivativeValpha}
\end{equation}

\begin{proposition}
\label{vmesto}
The following holds
\[
\left| \frac{\partial V}{\partial \alpha} \right| = \left| \frac{4 x \cos 2\beta + 4 y \sin 2\beta + 4 \sin \beta + 3 \cos(\alpha + 2\beta) - \cos(3\alpha + 2\beta)}{\cos (4\alpha + 4\beta) + 1} \right|.
\]
\end{proposition}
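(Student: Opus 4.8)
Here is a plan for proving Proposition~\ref{vmesto}.

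The cleanest route is not to manipulate the two coordinates of~\eqref{derivativeValpha} one by one, but to re-derive $\frac{\partial V}{\partial\alpha}$ in a frame adapted to the segment $[W_1V]$, where the norm becomes transparent. Throughout $x,y,r$ are fixed and $\beta=\tfrac{11\pi}{12}-\alpha$, so $\tfrac{\partial\beta}{\partial\alpha}=-1$ and $2(\alpha+\beta)=\tfrac{11\pi}{6}$ is constant; in particular $\cos 2(\alpha+\beta)=\tfrac{\sqrt3}{2}$ and $\cos(4\alpha+4\beta)=\tfrac12$. Put $u=(\cos 2\alpha,\sin 2\alpha)$ and $u^{\perp}=(-\sin 2\alpha,\cos 2\alpha)$, so that $\tfrac{\partial u}{\partial\alpha}=2u^{\perp}$. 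From $\tfrac{\partial W_1}{\partial\alpha}=(-r\sin\alpha,r\cos\alpha)$ one checks immediately that $\tfrac{\partial W_1}{\partial\alpha}\cdot u=r\sin\alpha$ and $\tfrac{\partial W_1}{\partial\alpha}\cdot u^{\perp}=r\cos\alpha$. Differentiating $V=W_1+l_1 u$ from~\eqref{xuistem} therefore gives
\[
\frac{\partial V}{\partial\alpha}=\Bigl(r\sin\alpha+\frac{\partial l_1}{\partial\alpha}\Bigr)u+\bigl(r\cos\alpha+2l_1\bigr)u^{\perp},
\qquad\text{hence}\qquad
\Bigl|\frac{\partial V}{\partial\alpha}\Bigr|^{2}=\Bigl(r\sin\alpha+\frac{\partial l_1}{\partial\alpha}\Bigr)^{2}+\bigl(r\cos\alpha+2l_1\bigr)^{2},
\]
using that $\{u,u^{\perp}\}$ is orthonormal.

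Next I would substitute the explicit value $l_1=-\tfrac{2}{\sqrt3}\bigl(x\cos 2\beta+y\sin 2\beta+r\cos(\alpha+2\beta)+r\sin\beta\bigr)$ coming from the solution of~\eqref{xuistem} (here $\cos 2(\alpha+\beta)=\tfrac{\sqrt3}{2}$ is used) together with its derivative
\[
\frac{\partial l_1}{\partial\alpha}=-\frac{2}{\sqrt3}\bigl(2x\sin 2\beta-2y\cos 2\beta+r\sin(\alpha+2\beta)-r\cos\beta\bigr),
\]
obtained by differentiating and plugging in $\tfrac{\partial\beta}{\partial\alpha}=-1$. Then one expands the two squares, collects the result as a quadratic polynomial in $x$ and $y$, and simplifies the trigonometric coefficients with product-to-sum identities and the values of $\cos 2(\alpha+\beta)$ and $\cos(4\alpha+4\beta)$; the outcome should be the square of the right-hand side of the proposition. (Equivalently, and this is likely the shortest write-up: start directly from the two coordinates $P_1,P_2$ of~\eqref{derivativeValpha}, clear the common denominator $2(\cos(4\alpha+4\beta)+1)$, and verify that the sum of the squares of the two numerators equals the appropriate multiple of the square of $4x\cos 2\beta+4y\sin 2\beta+4r\sin\beta+3r\cos(\alpha+2\beta)-r\cos(3\alpha+2\beta)$ — a pure trigonometric identity in $\alpha,\beta,x,y$.)

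The main obstacle is the bookkeeping in this last step, not anything conceptual. After squaring and summing, the cross term $2\bigl(r\sin\alpha+\tfrac{\partial l_1}{\partial\alpha}\bigr)\bigl(r\cos\alpha+2l_1\bigr)$ must combine with the pure $r^{2}$-terms exactly, so the simplification has to be carried out coefficient by coefficient (for $x^{2}$, $y^{2}$, $xy$, $x$, $y$, and the constant term), each time invoking $\cos 2(\alpha+\beta)=\tfrac{\sqrt3}{2}$. The only structural inputs are the orthonormality of $\{u,u^{\perp}\}$, the identity $\tfrac{\partial u}{\partial\alpha}=2u^{\perp}$, and the constancy of $\alpha+\beta$; once these are in place the identity is a mechanical (if lengthy) trigonometric verification.
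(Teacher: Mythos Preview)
Your proposal is correct. In fact, the paper's own proof of Proposition~\ref{vmesto} is nothing more than: write $\bigl|\partial V/\partial\alpha\bigr|^2$ as the sum of the squares of the two coordinates in~\eqref{derivativeValpha}, feed it to SageMath, and observe that it collapses to the square of the claimed expression. The alternative you mention parenthetically (``start directly from the two coordinates $P_1,P_2$ of~\eqref{derivativeValpha}\ldots'') is exactly what the paper does, only by computer.

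Your primary route---differentiating $V=W_1+l_1u$ in the moving orthonormal frame $\{u,u^{\perp}\}$---is a genuinely nicer setup. The components $r\sin\alpha+\partial l_1/\partial\alpha$ and $r\cos\alpha+2l_1$ that you obtain are visibly shorter than the raw coordinates in~\eqref{derivativeValpha}, because the rotation by $2\alpha$ has been absorbed into the basis. Your derivative computations for $W_1$, $u$ and $l_1$ are all correct. What this buys you is that the final ``mechanical (if lengthy)'' identity is already partially pre-simplified: each of $A=r\sin\alpha+\partial l_1/\partial\alpha$ and $B=r\cos\alpha+2l_1$ is $-\tfrac{2}{\sqrt3}$ times a four-term expression plus a single $r$-term, so $A^2+B^2$ is quadratic in $(x,y)$ with trigonometric coefficients that can be matched against the square of the target numerator term by term, using only $\cos2(\alpha+\beta)=\tfrac{\sqrt3}{2}$ and $\cos(4\alpha+4\beta)+1=\tfrac32$. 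The paper, by contrast, offers no insight into why the norm simplifies; it simply reports that the computer says so. Your approach would give a human-checkable proof if one were willing to write out the six coefficient comparisons.
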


\begin{proof}
Clearly,
\[
\left| \frac{\partial V}{\partial \alpha} \right|^2 = \left(\frac{\partial V}{\partial \alpha} \right)_x^2 + \left(\frac{\partial V}{\partial \alpha} \right)_y^2.
\]
Now we substitute~\eqref{derivativeValpha} and open the brackets in SageMath~\cite{sagemath}; the source code can be found at~\cite{c0pymaster_minimizer}.

\end{proof}

Now $\Delta_\alpha V$ can be estimated as a maximum of two values, similar to estimation of $\Delta_\alpha C$ in~\eqref{eq-deltac}.

\section{The case \texorpdfstring{$p \in P_0$}{p in P0}}
\label{sect:thefinal}

Let $p \in P_0$; consider the Steiner tree $\St(p)$. 
Recall than $\St(p)$ connects points $Q_1 \in \partial B_r(y(W_1))$, $Q_2 \in \partial B_r(y(W_2))$, $Q \in \partial B_r(A_1)$ and $V$,
so by definition $\St(p)$ has at most 4 distinct terminals.
In this section we consider possible topologies of $\St(p)$.

\paragraph{Case 1. The terminals $\St(p)$ on the circumferences $\partial B_r(y(W_1))$ and $\partial B_r(y(W_2))$ coincide, i.e. $Q_1 = Q_2$.}
Recall that $B_r(y(W_1)) \cap B_r(y(W_2)) = \emptyset$, so these circumferences are tangent to each other at point $Q_1$.
So $A_1Q_1$ is a median in right-angled triangle $A_1y(W_1)y(W_2)$ with hypotenuses $2r$, so $|Q_1A_1| = r$, and hence $Q = Q_1 = Q_2$ and $\St(p) = [VQ]$.
Recall that $B_r(y(W_1)) \cap \Sigma' = B_r(y(W_2)) \cap \Sigma' = \emptyset$, hence $[VQ]$ is tangent to both circumferences, so $[VQ] \perp [y(W_1)y(W_2)]$. 

Point $V$ lies on the perpendicular bisector of $y(W_1)y(W_2)$, hence $|Vy(W_1)| = |Vy(W_2)|$ and $\angle QVy(W_1) = \angle QVy(W_2)$. 
Since $\angle QVW_1 = \angle QVW_2 = 2\pi/3$ the angles $\angle y(W_1)VW_1$ and $\angle y(W_2)VW_2$ are also equal.
Recall that $|y(W_1)W_1| = |y(W_2)W_2| = r$. 
So the triangles $y(W_1)W_1V$ and $y(W_2)W_2V$ are either equal or $\angle y(W_1)W_1V + \angle y(W_2)W_2V = \pi$, but 
\[
\angle y(W_1)W_1V + \angle y(W_2)W_2V = (\pi-\alpha) + (\pi - \beta) = 13\pi/12.
\]
Hence 
\[
\pi - \alpha = \angle y(W_1)W_1V = \angle y(W_2)W_2V = \alpha + \pi/12,
\]
so $\alpha = 11\pi/24$ and $\Sigma'(p) = \Sigma'_0$. 

We have to estimate $\H(\Sigma'_0)$. Recall that we use the coordinate system defined in Section~\ref{Application}, assume that $r=1$. By definition
\[
Z_1(p_0) = \left(p_M(i) + 3, \sin\frac{11\pi}{24}\right) \approx (5.632993, 0.991445) \quad \quad Z_2(p_0) = \left(\sin\frac{11\pi}{24}, p_M(i) + 3\right) \approx (0.991445, 5.632993). 
\]
Since $x_0 = y_0 = \sqrt{2}$
\[
W_1(p_0) = \left(\sqrt{2} + \cos \frac{11\pi}{24}, \sin\frac{11\pi}{24}\right) \approx (1.544740, 0.991445) \quad\quad
W_2(p_0) = \left(\sin\frac{11\pi}{24}, \sqrt{2} + \cos \frac{11\pi}{24}\right) \approx (0.991445, 1.544740).
\]
Solving system~\eqref{xuistem} we have
\[
V(p_0) \approx (1.108370, 1.108370).
\]
Finally, $Q(p_0) = Q_1(p_0) = Q_2(p_0) = \left( \frac{1}{\sqrt{2}}, \frac{1}{\sqrt{2}} \right) \approx (0.707107, 0.707107)$.

So we have $\H(\Sigma'_0) \approx 9.647504$. Since every coordinate has error term $\frac{1}{2}10^{-6}$, a point lies in $\frac{1}{\sqrt{2}}\cdot10^{-6}$-neighborhood of its approximation. Then by triangle inequality error of the length of every segment is at most $\sqrt{2}\cdot10^{-6}$. Since $\Sigma'_0$ consists of five segments, the total error is at most $5\sqrt{2}\cdot 10^{-6} < 8 \cdot 10^{-6}$. It implies the inequality
\begin{equation}
\H (\Sigma'_0) > 9.647496.
\label{sigmazero}    
\end{equation}

\begin{figure}[h]
    \centering
    \hfill
    \begin{tikzpicture}[line cap=round,line join=round,>=triangle 45,x=1cm,y=1cm, scale=3]
        
        \def\area{(-0.4,-0.2) rectangle (2.4,1.8)}
        
        \clip \area;
        
        \coordinate (yW1) at (1.6114223987970764,0);
        \coordinate (yW2) at (0,1.1846171755698447);
        \coordinate (W1) at (1.6465681639300307,0.9993821967561856);
        \coordinate (W2) at (0.9767880005986617,1.3988254898026702);
        \coordinate (Q) at (0.8057111993985382,0.5923085877849223);
        \coordinate (V) at (1.1393091294555364,1.0460983755512507);
        
        \draw [shift={(W1)},] (0:0.16811171893165086) arc (0:87.9858811966611:0.16811171893165086);
        \draw [shift={(W1)},] (87.9858811966611:0.19613033875359268) arc (87.9858811966611:174.73816999204195:0.19613033875359268);
        \draw [shift={(yW1)},] (0:0.16811171893165086) arc (0:87.98588119666111:0.16811171893165086);
        \draw [shift={(W2)},] (12.369084996020984:0.16811171893165086) arc (12.369084996020984:90:0.16811171893165086);
        \draw [shift={(W2)},] (12.369084996020984:0.1316875131631265) arc (12.369084996020984:90:0.1316875131631265);
        \draw [shift={(W2)},] (-65.26183000795807:0.19613033875359268) arc (-65.26183000795807:12.369084996020987:0.19613033875359268);
        \draw [shift={(W2)},] (-65.26183000795807:0.1597061329850683) arc (-65.26183000795807:12.369084996020987:0.1597061329850683);
        \draw [shift={(yW2)},] (12.36908499602097:0.16811171893165086) arc (12.36908499602097:90:0.16811171893165086);
        \draw [shift={(yW2)},] (12.36908499602097:0.1316875131631265) arc (12.36908499602097:90:0.1316875131631265);
        \draw[] (0.8695627035358539,0.5453689470405105) -- (0.9165023442802657,0.6092204511778262) -- (0.85265084014295,0.656160091922238);
        
        \draw [very thick] (0,4)-- (0,0);
        \draw [very thick] (0,0)-- (4,0);
        \draw [] (0,.1846171755698447) arc (-90 : 90 : 1);
        \draw [] (0.6114223987970764,0) arc (180 : 0 : 1);
        
        \draw [thick, blue] (V)-- (Q);
        \draw [thick, blue] (V)-- (W2);
        \draw [thick, blue] (W2)-- (0.9767880005986617,2.750530680269417);
        \draw [thick, blue] (V)-- (W1);
        \draw [thick, blue] (W1)-- (3.1468345584737527,0.9993821967561856);
        \draw [,dashed] (Q)-- (0,0);
        \draw [,dashed] (V)-- (yW1);
        \draw [,dashed] (V)-- (yW2);
        \draw [,dashed] (yW2)-- (Q) node[below left, pos=0.5]{$r$};
        \draw [,dashed] (Q)-- (yW1) node[below left, pos=0.5]{$r$};
        \draw [,dashed] (1.296092521940362,1.4688485482366445)-- (yW2) node[above, pos=0.6]{$r$};
        \draw [,dashed] (1.6577766077554446,1.3180982403559112)-- (yW1) node[right, pos=0.6]{$r$};

        \def\nodesize{0.025}
        
        \node[below left] at(0,0) {$A_1$};
        
        \fill (yW1) circle (\nodesize) node[black, below] {$y(W_1)$};
        \fill[blue] (W1) circle (\nodesize) node[black, below right] {$W_1$};
        \fill (yW2) circle (\nodesize) node[black, left] {$y(W_2)$};
        \fill[blue] (W2) circle (\nodesize) node[black, above left] {$W_2$};
        \fill[blue] (Q) circle (\nodesize) node[black, below] {$Q$};
        \fill[blue] (V) circle (\nodesize);
        \node[above right] at (1.1, 1.05) {$\frac{2\pi}{3}$};
        \fill[white] (1.2, .9) circle (0.075) node[black] {$V$};
        \draw[black] (1.9, 1.1) node {$\alpha$};
        \draw[black] (1.15, 1.6) node {$\beta$};
    \end{tikzpicture}
    \hfill
    \begin{tikzpicture}[line cap=round,line join=round,>=triangle 45,x=1cm,y=1cm,scale=4]
        
        \clip(0.35,-0.15) rectangle (2, 1.3);
        
        \coordinate (W1) at (1.6598760226544096,0.9969391568088625);
        \coordinate (yW1) at (1.5816947101735217,0);
        \coordinate (Q1) at (1.2221732712305209,0.9331368254121976);
        \coordinate (T) at (1.2005045377735641,0.9893779586480194);
        \coordinate (V) at (1.257657682658331,1.0601348326846107);
        
        \draw [shift={(W1)},]  (0:0.09) arc (0:85.51596484766341:0.09) ;
        \draw [shift={(W1)},]  (85.51596484766341:0.08028676698633866) arc (85.51596484766341:171.07080886590975:0.08028676698633866) ;
        \draw [shift={(yW1)},]  (0:0.0963441203836064) arc (0:85.51596484766341:0.0963441203836064);

        \draw [very thick] (0,4)-- (0,0);
        \draw [very thick] (0,0)-- (4,0);
        \draw [] (2.5816947101735217, 0) arc (0 : 180 : 1);
        \draw [] (0, .5904933321377084) arc (-90 : 90 : 1);
        \draw [thick, blue] (0.9881259165440313,1.0227464283958017) -- (T);
        \draw [thick, blue] (V)-- (T);
        \draw [thick, blue] (T)-- (Q1);
        \draw [,dashed] (Q1)-- (yW1);
        \draw [thick, blue] (V)-- (0.9833909434739174,1.7719933384558293);
        \draw [thick, blue] (V)-- (W1);
        \draw [thick, blue] (W1)-- (2.2674807127100403,0.9969391568088625);
        \draw [,dashed] (yW1)-- (W1);
        \draw [,dashed] (W1)-- (1.681942908701103,1.2783279192981831);

        \def\nodesize{0.015}
        
        \fill[] (0,4) circle (\nodesize);
        \fill[] (0.5,0) 
            node[below left] {$A_1$};
        \fill[] (4,0) circle (\nodesize);
        \fill[] (yW1) circle (\nodesize) 
            node[black, below] {$y(W_1)$};
        \fill[white, shift={(Q1)}] (-0.085, -0.035) circle (0.05);
        \draw[shift={(Q1)}] (-0.06, -0.06) node[black] {$Q_1$};
        \fill[blue] (Q1) circle (\nodesize);
        \fill[blue] (T) circle (\nodesize)  
            node[black, above left] {$T$};
        \fill[blue] (V) circle (\nodesize) 
            node[black, above right] {$V$};
        \fill[blue] (W1) circle (\nodesize)  
            node[black, below right] {$W_1$};
        \draw[shift={(W1)}] (0.13, 0.07) node {$\alpha$};
        \draw[shift={(yW1)}] (-0.12, 0.07) node {$\psi$};

    
    \end{tikzpicture}
    \hfill\phantom{}
    \caption{Cases 1 and 2}
    \label{caseVQandT}
\end{figure}
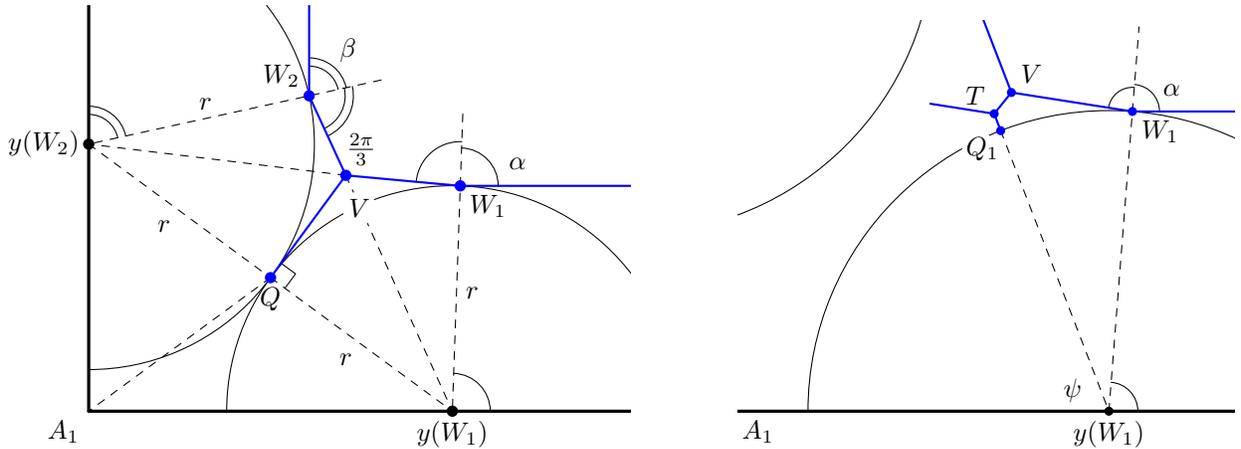

\paragraph{Case 2. $\St(p)$ has a branching point, say $T$.} Then at least one of terminals $Q_1$, $Q_2$ has degree 1, without loss of generality it is $Q_1$. 
If $Q_1$ is connected with $T$ by a segment of $\Sigma'$, then sum of the angles in quadrangle $VTy(W_1)W_1$ is equal to 
\[
\angle V + \angle T + \angle W_1  + \angle y(W_1)  = \frac{2\pi}{3} + \frac{2\pi}{3} + (\pi-\alpha) + (\pi-\alpha-\angle A_1y(W_1)Q_1) = 2\pi.
\]
Hence $2\alpha + \psi = \frac{4\pi}{3}$, where $\psi := \angle A_1y(W_1)Q_1$. From the other hand, Proposition~\ref{diffproposition} for $y(W_1)$ gives
$2\cos^2\alpha = \cos \psi$.
Substituting $\psi$, we have
\[
2\cos^2\alpha = \cos \left(\frac{4\pi}{3} - 2\alpha \right) = \cos (2\alpha) + 1.
\]
Summing up,
\begin{equation}
    2\sin\left(\frac{2\pi}{3}\right)\sin\left(2\alpha-\frac{2\pi}{3}\right) = 1, \quad \alpha =  \frac{1}{2}\left(\arcsin \left(\frac{1}{\sqrt 3}\right) + \frac{2\pi}{3}\right).
\label{aCasenotinP0}
\end{equation}
So $\alpha = 1.354\dots < \alpha_0 - \frac{1}{30}$, so $p$ does not lie in $P_0$. 

If $\Sigma'$ contains branching points or points of degree 2 in the path between $Q_1$ and $T$, then $2\alpha + \psi \leq \frac{4\pi}{3}$.
Let $t := \alpha + \frac{\psi}{2}$; note that $\frac{\pi}{3} \leq \alpha \leq t \leq \frac{2\pi}{3}$.
Similarly to~\eqref{aCasenotinP0},
\[
2\sin\left(t\right)\sin\left(2\alpha-t\right) = 1; \quad \alpha (t) = \frac{1}{2}\left(t + \arcsin \left(\frac{1}{2\sin t}\right)\right).
\]
The derivative
\[
\frac{d \alpha(t)}{d t} = \frac{1}{2}\left(1 - \frac{\cot t \csc t}{\sqrt{4 - \csc^2t}}\right)
\]
is positive on $(\pi/8, \pi) \supset (\pi/3 ,2\pi/3)$, so $\alpha (t)$ is maximal at $t = 2\pi/3$. Hence $p$ does not lie in $P_0$.

\paragraph{Case 3. $\St(p)$ consists of 2 line segments.} Since $Q_1$ and $Q_2$ are different, $\St(p) = [VQ_2] \cup [Q_2Q_1]$ (up to $Q_1-Q_2$ symmetry).
So $Q_1 \in B_r(A_1)$ or $Q_2 \in B_r(A_1)$.

\paragraph{Case 3a.} In this case $Q_1 \in B_r(A_1)$. There is a configuration better than $p_0$; we provide it explicitly (for simplicity $r=1$):
\[
Z_1 = (5.632993, 0.991394), \quad
W_1 = (1.545358, 0.991394), \quad
V = (1.108083, 1.108927), \quad
W_2 = (0.991495, 1.545397), \quad
\]
\[
Z_2 = (0.991495, 5.632993), \quad
Q_2 = (0.723714, 0.725155), \quad
Q_1 = (0.707224, 0.706989), \quad
\]
\[
y(W_1) = (1.414448, 0), \quad
y(W_2) = (0, 1.415255).
\]
Explicit calculations show that 
\[
0.999999 < |y(W_2)W_2|, |y(W_2)Q_2|, |A_1Q_1|, |Q_1y(W_1)|, |W_1y(W_1)| < 1. 
\]
Note that $W_1Z_1$ is parallel to $A_1A_2$ and $W_2Z_2$ is parallel to $A_1A_4$.
So $[y(W_1)C_1]$ is covered by $[W_1Z_1]$ and $[y(W_2)C_2]$ is covered by $[W_2Z_2]$.
Also $[Q_1Q_2]$ covers $[y(W_2)A_1]$ and $Q_1$ covers $[A_1y(W_1)]$.

The total length $\Sigma'(p) = [Z_1W_1] \cup [W_1V] \cup [VW_2] \cup [W_2Z_2] \cup [VQ_2] \cup [Q_2Q_1]$ is less than $9.647492$ which is smaller than the length of $\Sigma'_0$, see~\eqref{sigmazero}. 

The total length of a minimizer is $Per - 8\left(\frac{4}{\sqrt{6}} + 2 + 1\right) + 4\H(\Sigma'(p)) - 2r + o(r) \approx Per - 8.473981$. 
The term $2r + o(r)$ is the  difference between a solutions of Problem~\ref{TheProblem} and Problem~\ref{TheProblemCycle}.

\begin{figure}[h]
    \centering
    \begin{tikzpicture}[line cap=round,line join=round,>=triangle 45,scale=2.6]

    \coordinate (yW1) at(1.6064, 0);
    \coordinate (yW2) at(0,1.723);
    \coordinate (W1) at(1.8218,0.9765);
    \coordinate (W2) at(0.999, 1.76743);
    \coordinate (V) at(1.0373,1.34026);
    \coordinate (Q2) at(0.863,1.2177);
    \coordinate (Q1) at(0.8, 0.6);
    
    \draw [very thick] (0,3) -- (0,0) -- (3,0);
    \draw [] (1,0) arc (0 : 90 : 1);
    \draw [shift={(yW1)}] (1,0) arc (0 : 180 : 1);
    \draw [shift={(yW2)}] (0,1) arc (90 : -90 : 1);
    \draw [dashed] (yW1)-- (1.88642, 1.26945);
    \draw [dashed] (yW2)-- (1.2987, 1.780759);
    \draw [dashed] (yW2) -- (1.1219, 1.06611);
    \draw [dashed] (0,0) -- (Q1) -- (yW1);
    \draw [thick, blue] (0.9990000868263488,3)-- (W2) -- (V) -- (W1) -- (3,0.97653);
    \draw [thick, blue] (V) -- (Q2) -- (Q1);
    
    \def\zzz{0.13}
    \draw [thin,shift={(yW1)}] (\zzz, 0) arc(0 : 76 : \zzz);
    \draw [shift={(yW1)}] (\zzz/1.4, \zzz/1.4) node[right]{$\alpha$};
    \draw [thin,shift={(W1)}] (\zzz, 0) arc(0 : 76 : \zzz);
    \draw [thin,shift={(W1)}] (0.242*0.9*\zzz, 0.97*0.9*\zzz) arc(76 : 152 : 0.9*\zzz);
    
    \draw[thin,shift={(yW2)}] (0, \zzz) arc(90 : 5 : \zzz);
    \draw[thin,shift={(yW2)}] (0, 0.8*\zzz) arc(90 : 5 : 0.8*\zzz);
    \draw[thin,shift={(W2)}] (0, \zzz) arc(90 : 5 : \zzz);
    \draw[thin,shift={(W2)}] (0, 0.8*\zzz) arc(90 : 5 : 0.8*\zzz);
    \draw[thin,shift={(W2)}] (0.996 * .9 * \zzz, 0.087 * .9 * \zzz) arc(5 : -80 : .9* \zzz);
    \draw[thin,shift={(W2)}] (0.996 * .7*  \zzz, 0.087 * .7 *\zzz) arc(5 : -80 : .7 *\zzz);
    
    \draw[thin,shift={(Q2)}] (0.9*\zzz,0) arc(0: -29 : 0.9*\zzz);
    \draw[thin,shift={(Q2)}] (0.9*\zzz,0) arc(0 : 33 : 0.9*\zzz);
    \draw[thin,shift={(Q2)}] (0.75*\zzz, 0) -- (1.05*\zzz,0);
    \draw[thin,shift={(Q2)}] (0,-\zzz) arc(-90 : -96 : \zzz);
    \draw[thin,shift={(Q2)}] (0,-\zzz) arc(-90 : -29 : \zzz);
    \draw[thin,shift={(Q2)}] (0.425*\zzz, -0.736*\zzz) -- (0.575*\zzz,-0.996*\zzz);
    
    \draw [shift={(yW1)}] (\zzz/1.4, \zzz/1.4) node[right]{$\alpha$};
    \draw [shift={(yW2)}] (\zzz/1.7, -\zzz) node{$\gamma$};
    \draw [shift={(Q2)}]  (\zzz/1.7, -\zzz) node[below]{$q$};
    \draw [shift={(yW1)}] (-\zzz*1.4, \zzz/1.6) node[left]{$t$};
    \draw [shift={(0,0)}] (\zzz*1.4, \zzz/1.6) node[right]{$t$};
    
        \def\nodesize{0.02}
    
    \draw [] (0,0) node[below left] {$A_1$};
    \fill [] (yW1) circle (\nodesize) node[below] {$y(W_{1})$};
    \fill [] (yW2) circle (\nodesize) node[left] {$y(W_{2})$};
    \fill [blue] (W1) circle (\nodesize) node[below left, black] {$W_1$};
    \fill [blue] (W2) circle (\nodesize) node[above left, black] {$W_2$};
    \fill [blue] (V) circle (\nodesize) node[above right, black] {$V$};
    \fill [blue] (Q2) circle (\nodesize) node[left, black] {$Q_2$};
    \fill [blue] (Q1) circle (\nodesize) node[right, black] {$Q_1$};
    \end{tikzpicture}
    \hfill
    \begin{tikzpicture}[scale=2.6]
    
    \coordinate (W1) at (1.9487, 0.9853);
    \coordinate (yW1) at (1.778, 0);
    \coordinate (W2) at (0.996, 1.891);
    \coordinate (yW2) at (0, 1.801);
    \coordinate(V) at (1.106, 1.286);
    \coordinate (Q1) at (0.947, 0.557);
    \coordinate (Q2) at (0.435,0.9);
    
    \draw[very thick] (3, 0) -- (0, 0) -- (0, 3);
    \draw[shift={(yW1)}] (1, 0) arc (0 : 180 : 1);
    \draw[shift={(yW2)}] (0, 1) arc (90 : -90 : 1);
    \draw (1, 0) arc (0 : 90 : 1);
    \draw[dashed] (yW1) -- (2, 1.28);
    \draw[dashed] (yW2) -- (1.2948, 1.9180);
    \draw[dashed] (Q1) -- (yW1);
    \draw[dashed] (0,0) -- (Q2) -- (yW2);
    \draw[thick, blue] (3, 0.9853) -- (W1) -- (V) -- (W2) -- (0.996, 3);
    \draw[thick, blue] (V) -- (Q2) -- (Q1);

    \def\zzz{0.13}
    \draw [thin,shift={(yW1)}] (\zzz, 0) arc(0 : 80 : \zzz);
    \draw [shift={(yW1)}] (\zzz/1.4, \zzz/1.4) node[right]{$\alpha$};
    \draw [thin,shift={(W1)}] (\zzz, 0) arc(0 : 80 : \zzz);
    \draw [thin,shift={(W1)}] (0.1736*0.9*\zzz, 0.9848*0.9*\zzz) arc(76 : 152 : 0.9*\zzz);
    
    \draw[thin,shift={(yW2)}] (0, \zzz) arc(90 : 5 : \zzz);
    \draw[thin,shift={(yW2)}] (0, 0.8*\zzz) arc(90 : 5 : 0.8*\zzz);
    \draw[thin,shift={(W2)}] (0, \zzz) arc(90 : 5 : \zzz);
    \draw[thin,shift={(W2)}] (0, 0.8*\zzz) arc(90 : 5 : 0.8*\zzz);
    \draw[thin,shift={(W2)}] (0.996 * .9 * \zzz, 0.087 * .9 * \zzz) arc(5 : -80 : .9* \zzz);
    \draw[thin,shift={(W2)}] (0.996 * .7*  \zzz, 0.087 * .7 *\zzz) arc(5 : -80 : .7 *\zzz);
    
        \def\nodesize{0.02}
    
    \draw (0,0) node[below left] {$A_1$};
    \fill (yW1) circle(\nodesize) node[below] {$y(W_1)$};
    \fill (yW2) circle(\nodesize) node[left] {$y(W_2)$};
    
    \fill[blue] (W1) circle(\nodesize) 
        node[label={[black, shift={(0.3,-0.8)}]$W_1$}] {};
    \fill[blue] (W2) circle(\nodesize) node[above left, black] {$W_2$};
    \fill[blue] (V) circle(\nodesize);
    \draw[shift={(V)}] (-0.01, 0.02) node[left, black] {$V$};
    \fill[blue] (Q1) circle(\nodesize) node[right, black] {$Q_1$};
    \fill[blue] (Q2) circle(\nodesize) 
        node[label={[black,shift={(0.2,0.05)}]$Q_2$}] {};
    
    \end{tikzpicture}
    \caption{Cases 3a and 3b}
    \label{fig:TheOptimalCase}
\end{figure}
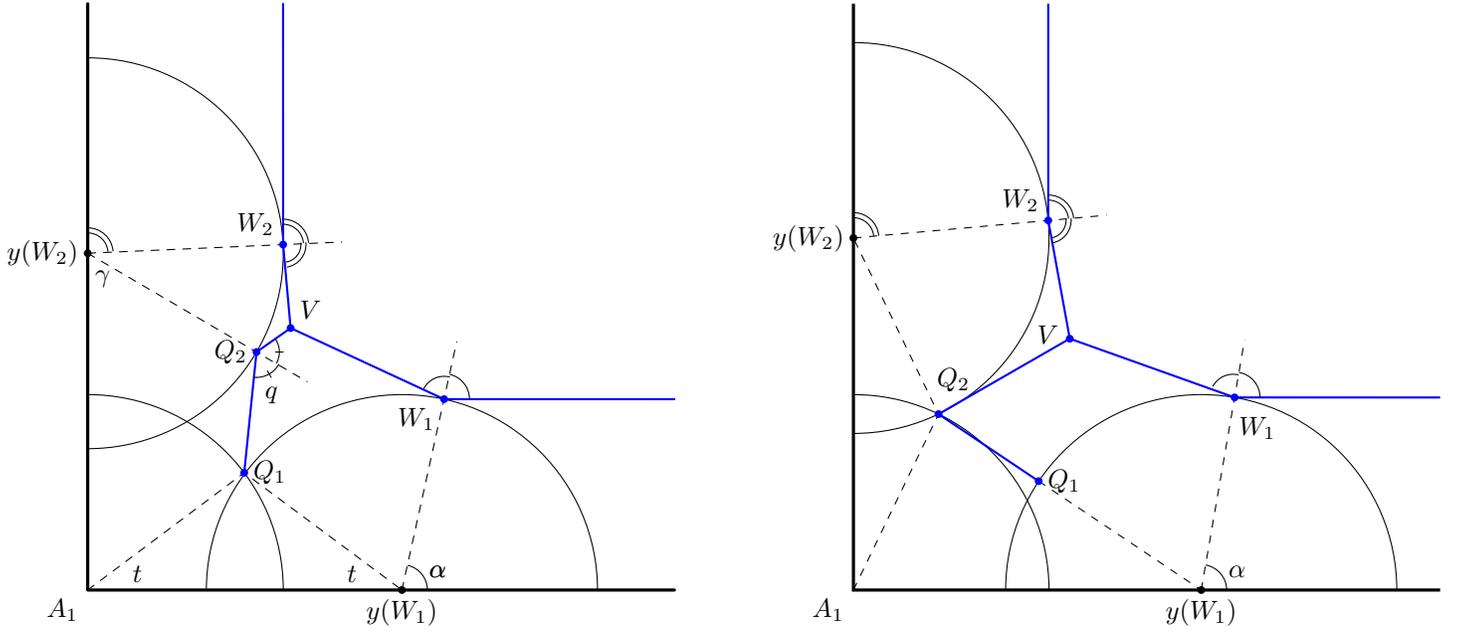

\paragraph{Case 3b. $Q_2 \in B_r(A_1)$.} Then by Proposition~\ref{diffproposition} 
\[
2\cos^2 \alpha = \cos \angle Q_1y(W_1)A_1,
\]
so 
\[
\angle Q_1y(W_1)A_1 = \arccos (2\cos^2 \alpha) > \frac{4\pi}{9}
\]
for $p \in P_0$. Since $|Q_2y(W_1)| = |Q_2Q_1| + |Q_1y(W_1)| > r$ and $|A_1Q_2| = r$, one has $\angle Q_2A_1y(W_1) > \angle Q_2y(W_1)A_1 > \frac{4\pi}{9}$. Hence $\angle y(W_2) A_1 Q_2 < \frac{\pi}{18}$.
Triangle $A_1Q_2y(W_2)$ is isosceles so $\angle A_1y(W_2)Q_2 = \angle y(W_2) A_1 Q_2 < \frac{\pi}{18}$.

Consider the sum of angles in quadrangle $y(W_2)W_2VQ_2$: angles $W_2$ and $Q_2$ are at least $\pi/2$, $\angle V = \frac{2\pi}{3}$, which implies that $\angle y(W_2) = \angle W_2y(W_2)Q_2 \leq \pi/3$. Summing up $\angle A_1y(W_2)W_2 = \angle y(W_2) A_1 Q_2 + \angle W_2y(W_2)Q_2$ is acute, which is a contradiction.

\paragraph{Case 4. $\St(p)$ consists of three line segments $[VQ_2]$, $[Q_2Q_1]$ and $[Q_1Q]$ (up to $Q_1-Q_2$ symmetry).}
Put $q = \frac{\angle VQ_2Q_1}{2}$, $t = \angle Q_1A_1y(W_1)$, $\delta = \angle Q_1y(W_1)A_1$ and $\gamma = A_1y(W_2)Q_2$ (see Fig.~\ref{Fig:case4}).

Application of Proposition~\ref{diffproposition} to $y(W_1)$ gives the following conditions: 
\begin{equation}
\begin{cases}
q+\gamma = 2\alpha - \pi/6\\
2\cos^2\left(\frac{11}{12}\pi - \alpha\right ) = 2\cos q \cos \gamma.
\end{cases}
\label{thefirsteqDiff}
\end{equation}
Transform the second equation
\[
\cos\left(\frac{11}{6}\pi - 2\alpha\right ) + 1 = \cos (q + \gamma) + \cos (q - \gamma) = \cos(2\alpha - \pi/6) + \cos (q - \gamma).
\]
Hence
\[
\cos (q - \gamma) = \cos\left(\frac{11}{6}\pi - 2\alpha\right ) + 1 - \cos(2\alpha - \pi/6) = 2\sin \frac{5\pi}{6}\sin(2\alpha-\pi) + 1 = 1-\sin(2\alpha).
\]
So we find $q$ and $\gamma$ in terms of $\alpha$:
\[
q = \frac{(q-\gamma)}{2} + \frac{(q + \gamma)}{2} = \frac{\arccos(1-\sin(2\alpha))}{2} + \alpha - \frac{\pi}{12};
\quad
\gamma =  2\alpha - \frac{\pi}{6} - q = - \frac{\arccos(1-\sin(2\alpha))}{2} + \alpha - \frac{\pi}{12}.
\]

Applying Proposition~\ref{diffproposition} to $y(W_2)$ we get the following conditions:
\[
\begin{cases}
t+2\delta = 2\pi/3 - 2\alpha + 2q\\
\cos^2\alpha = \cos \delta \cos (t+\delta).
\end{cases}
\]
Analogously to~\eqref{thefirsteqDiff}, we have
\[
\cos t = 1 + \cos (2\alpha) - \cos (2\pi/3 - 2\alpha + 2q).
\]
Hence
\[
t = \arccos \left( 1 + \cos (2\alpha) - \cos (2\pi/3 - 2\alpha + 2q) \right);
\quad
\delta = \frac{2\pi/3 - 2\alpha + 2q-t}{2} = \frac{\pi}{3} - \alpha + q - \frac{t}{2}.
\]

Now we can find $x$ and $y$ in terms of $\alpha$. By the sine rule for triangle $A_1Q_1y(W_1)$, we have
\[
x=|A_1y(W_1)|=\frac{|y(W_1)Q_1| \sin \angle A_1Q_1y(W_1)}{\sin \angle y(W_1)A_1Q_1}=\frac{r \sin (t+\delta)}{\sin t},
\]
and 
\begin{equation}
|A_1Q_1|=\frac{|y(W_1)Q_1| \sin \angle A_1y(W_1)Q_1}{\sin \angle y(W_1)A_1Q_1}=\frac{r \sin \delta}{\sin t}.
\label{aq1eq}
\end{equation}

Consider a quadrangle $A_1Q_1Q_2y(W_2)$. Projection of this quadrangle on line $A_1y(W_2)$ gives
\begin{equation}
y=|A_1y(W_2)|=|A_1Q_1|\cos \angle Q_1A_1y(W_2) + |Q_1Q_2| \cos \left(t+2\delta-\frac{\pi}{2}\right)+|Q_2y(W_2)|\cos \gamma,
\label{yeq}
\end{equation}
and projection on line $Q_2y(W_2)$ gives
\[
r=|Q_2y(W_2)|=|A_1y(W_2)|\cos \gamma + |Q_1Q_2|\cos (\pi - q) + |A_1Q_1|\sin (\gamma - t).
\]
Note that sines and cosines may be negative, which corresponds to the case on intersection of the projections of sides.

The previous condition implies
\[
|Q_1Q_2|=\frac{1}{\cos q}\left( y \cos \gamma + \frac{r \sin \delta \sin (\gamma - t)}{\sin t} -r \right).
\]

We substitute this expression into~\eqref{yeq} and get
\[
y=\frac{r \sin \delta}{\sin t} \sin t + \frac{\cos \left(t+2\delta-\frac{\pi}{2}\right)}{\cos q}\left( y \cos \gamma + \frac{r \sin \delta \sin (\gamma - t)}{\sin t} -r \right) + r \cos \gamma;
\]
\[
y=r \left(\sin \delta + \cos \gamma + \frac{\cos \left(t+2\delta-\frac{\pi}{2}\right)}{\cos q}\left(\frac{\sin \delta \sin (\gamma - t)}{\sin t} - 1 \right)\right)  \left(1 - \frac{\cos \left(t+2\delta-\frac{\pi}{2}\right) \cos \gamma}{ \cos q}\right)^{-1};
\]
\[
y=r \frac{ \cos \left( t + 2\delta - \frac{\pi}{2} \right) \left( 1 - \frac{\sin \delta \sin \gamma \cos t}{\sin t} + \sin \delta \cos \gamma \right) - (\sin \delta + \cos \gamma) \cos q }{ \cos \left( t + 2\delta - \frac{\pi}{2} \right) \cos \gamma - \cos q }.
\]

Recall that we need to check the following conditions:
\begin{itemize}
    \item [(i)] The distance between $A_1$ and $Q_1$ should be greater than $r$, otherwise it case 3a or case 5.
    From \eqref{aq1eq}, we get
    \[
    |A_1Q_1| = \frac{r \sin \delta}{\sin t} > r.
    \]
    
    \item [(ii)] The angle between $Q_2V$ and $A_1y(W_2)$ should be equal to $2\beta - \frac{2\pi}{3}$. 
    Let $S$ be such a point that the triangle $W_1SW_2$ is equilateral, and $S$ and $Q_2$ are on the opposite sides of the line $W_1W_2$. Such $S$ lies on the ray $Q_2V$ since $V$ is the Steiner point in the triangle $W_1W_2Q$. Thus, the angle between $Q_2V$ and $A_1y(W_2)$ is equal to the angle between vectors $Q_2S$ and $(0,1)$.
    
    We express the coordinates of points $Q_2$, $W_1$, $W_2$, $S$ as functions of $\alpha$:
    \[
    Q_2 = (\sin \gamma , y - \cos \gamma),\quad W_1 = (x + \cos \alpha, \sin \alpha),\quad W_2 = (\sin \beta, y + \cos \beta),
    \]
    \[
    S = W_2 + (W_1 - W_2)\begin{pmatrix}
    \cos \frac{\pi}{3} & \sin \frac{\pi}{3}\\
    -\sin \frac{\pi}{3} & \cos \frac{\pi}{3}\\
    \end{pmatrix}.
    \]
    The second condition we need to check is
    \[
    2\beta - \frac{2\pi}{3} = \frac{(S - Q_2)\cdot (0, 1)^\mathsf{T}}{|SQ_2|}.
    \]
\end{itemize}

\begin{figure}[h]
    \centering
        \begin{tikzpicture}[line cap=round,line join=round,>=triangle 45, scale=3.5]
        \clip(-0.3,-0.3) rectangle (2.5 ,2.3);

        \coordinate (yW1) at (1.7587686969043363, 0);
        \coordinate (yW2) at (0, 1.5251434725073525);
        \coordinate (W1) at (1.9143092246823912,0.9878295117167355);
        \coordinate (W2) at (0.9944268882119729,1.6305719506385126);
        \coordinate (V) at (1.0740764315204938,1.2591572744280084);
        \coordinate (Q2) at (0.9102693894887516,1.111127001144467);
        \coordinate (Q1) at (0.8996130460006508,0.5117143417183696);
        \coordinate (Q) at (.7646710891, .4349571904);
        \coordinate (A) at (0, 0);
        
        \draw [very thick] (4,0) -- (0,0) -- (0,4);
        
        \draw [] (2.7587686969043363,0) arc (0 : 180 : 1);
        \draw [] (0,0.5251434725073525) arc (-90 : 90 : 1);
        
        \draw [thick, blue] 
            (0.9944268882119729, 2.720470361008444) -- 
            (0.9944268882119729, 1.6305719506385126) -- 
            (1.0740764315204938, 1.2591572744280084) --
            (W1) --
            (2.9775056909256583,0.9878295117167355);
            
        \draw [,dashed] (yW2)-- (1.3233823334691772,1.6654475884330455);
        \draw [,dashed] (yW1)-- (1.9669111149990113,1.3219012830366181);
        
        \draw [thick, blue] 
            (V) --
            (Q2) --
            (Q1) --
            (Q);
        
        \draw [, blue, dashed] 
            (Q) -- 
            (0, 0);
        
        \draw [,dashed] (yW2)-- (1.1255703781178321,1.0132019729907502);
        \draw [,dashed] (yW1)-- (0.6933471738604602,0.6345665919132711);
        
        \draw[blue, dashed] (.8192424506, 1.028867151) -- (0,.288528504667428);

        \draw [shift={(W1)},]  
            (0:0.09320635511994109) arc (0:81.05185357218244:0.09320635511994109);
        \draw [shift={(yW1)},]  
            (0:0.1118476261439293) arc (0:81.05185357218242:0.1118476261439293);
        \draw [shift={(W1)},] 
            (81.05185357218244:0.1118476261439293) arc (81.05185357218244:162.10370714436485:0.1118476261439293);
            
        \draw [shift={(W2)},]
            (6.051853572182469:0.09320635511994109) arc (6.051853572182469:90:0.09320635511994109);
        \draw [shift={(W2)},]
            (6.051853572182469:0.0689727027887564) arc (6.051853572182469:90:0.0689727027887564);
        \draw [shift={(W2)},]
            (-77.8962928556351:0.1118476261439293) arc (-77.8962928556351:6.051853572182465:0.1118476261439293);
        \draw [shift={(W2)},]
            (-77.8962928556351:0.08761397381274462) arc (-77.8962928556351:6.051853572182465:0.08761397381274462);
        \draw [shift={(yW2)},] (
            6.051853572182454:0.1118476261439293) arc (6.051853572182454:90:0.1118476261439293);
        \draw [shift={(yW2)},] 
            (6.051853572182454:0.08761397381274462) arc (6.051853572182454:90:0.08761397381274462);
            
        \draw [shift={(Q2)},] 
            (-24.457394260120623:0.09320635511994109) arc (-24.457394260120623:42.10370714436493:0.09320635511994109);
        \draw[] (0.9885572752163387,1.1232790070244527) -- (1.0161882937084283,1.1275679502762121);
        \draw [shift={(Q2)},]
            (-91.01849566460629:0.1118476261439293) arc (-91.01849566460629:-24.45739426012064:0.1118476261439293);
        \draw[] (0.9625098801185278,1.0283694226481979) -- (0.9774357345841781,1.0047244002206925);
        
        \draw [shift={(Q1)},]
            (88.98150433539372:0.1118476261439293) arc (88.98150433539372:149.22191120368805:0.1118476261439293);
        \draw[] (0.8433032178795815,0.5917586351863084) -- (0.8272146955592761,0.6146284333200052);
        \draw[] (0.8612514088781222,0.6017491708517958) -- (0.8502909411288282,0.6274734077470607);
        \draw [shift={(Q1)},]
            (149.22191120368805:0.09320635511994109) arc (149.22191120368805:209.63191386800077:0.09320635511994109);
        \draw[] (0.8207481420287545,0.5041650887681426) -- (0.7929134700386734,0.5015006465504157);
        \draw[] (0.8209149307803554,0.5208396340082653) -- (0.7931391254084862,0.5240603254046989);

        \draw [decorate , decoration={brace, mirror, amplitude=3pt},      
            xshift=0pt,yshift=-2pt]
        (0, 0) -- (1.7587686969043363, 0) node [midway,yshift=-0.3cm] {$x$};
        \draw [decorate , decoration={brace, amplitude=3pt},      
            xshift=-2pt,yshift=0pt]
        (0, 0) -- (0, 1.5251434725073525) node [midway,xshift=-0.3cm] {$y$};

        \def\nodesize{0.02}
        
        \fill [] (yW1) circle (\nodesize)
            node[below right]{$y(W_1)$};
        \fill [] (yW2) circle (\nodesize)
            node[above left]{$y(W_2)$};
        \fill [blue] (W1) circle (\nodesize)
            node[black, below left]{$W_1$};
        \fill [blue] (W2) circle (\nodesize)
            node[black, above left]{$W_2$};
        \fill [blue] (V) circle (\nodesize);
        \fill [blue] (Q2) circle (\nodesize);
        \draw [shift=(Q2)] (0, -0.02) 
            node[black, left] {$Q_2$};
        \fill [blue] (Q1) circle (\nodesize);
        \draw [shift=(Q1)] (0, 0.02)
            node[black, right] {$Q_1$};
        \fill [blue] (Q) circle (\nodesize)
            node[black, below] {$Q$};
     
        \fill [] (A) circle (\nodesize)
             node[black, below left] {$A_1$};

        \draw [shift={(V)}] (0.04, -0.02)
            node[below]{$V$};
        \draw [shift={(V)}] (0, 0.05)
            node[right]{\tiny $2\pi/3$};
        \fill[white, shift={(V)}] (-0.1, 0.03) circle (0.06);
        \draw [shift={(V)}] (0, 0.03)
            node[left]{\tiny $2\pi/3$};
        
        \draw [shift={(W1)}] (0.08,0.07) node[right] {$\alpha$};
        \draw [shift={(W2)}] (0.07,0.08) node[right] {$11\pi/12 - \alpha$};
        
        \draw (0.25, 0.07) node[right]{$t$};
        \draw [shift={(yW2)}] (0.07,-0.1) node[below]{$\gamma$};
        \draw [shift={(yW1)}] (-0.25, 0.07) node[left]{$\delta$};
        
        \draw [shift={(Q2)}] (0.07, -0.15) node[right]{$q$};
    \end{tikzpicture}
    \caption{Case 4}
    \label{Fig:case4}
\end{figure}

Now we can substitute $\alpha=\alpha_0+\Delta\alpha$ into all the expressions above and treat each of these expressions as a function of $\Delta\alpha$. We bound each of these functions in $\Delta \alpha \in [-1/30,1/30]$ from below and from above by two 6th order polynomials. To obtain these polynomials we consider the following functions:
\[
\sin(x+c), \quad 0<x+c<\frac{\pi}{2},
\]
\[
\cos(x+c), \quad 0<x+c<\frac{\pi}{2},
\]
\[
\arccos(x+c), \quad 0<c<\frac{85}{100}, \quad 0<x+c<\frac{85}{100},
\]
\[
\frac{1}{x+c}, \quad \frac{4}{10}<c, \quad \frac{4}{10}<x+c<2c,
\]
\[
\frac{1}{x+c}, \quad \frac{12}{10}<c, \quad \frac{12}{10}<x+c<2c,
\]
\[
\sqrt{x+c}, \quad \frac{3}{2}<c, \quad \frac{3}{2}<x+c<2c.
\]
We bound each of these functions by 6th order polynomials from below and from above, using Taylor approximation series with the Lagrange form of the remainder. Note that 6th order derivative of each of these functions does not change sign and is monotonous on the corresponding value interval of $x+c$. It means that in each case the remainder can be bound by zero from one side, and by $f^{(6)}(z) \cdot x^6 / 6!$ on the other side, where $z$ is one of the endpoints of the value interval of $x+c$.

Now suppose we want to estimate $f(g(\Delta \alpha))$, where
\[
q_1(x,c)\leq f(x+c)\leq q_2(x,c), \quad L_c<c<R_c, \quad L<x+c<R,
\]
is one of the functions above, and function $g$ is already estimated:
\[
p_1(\Delta \alpha)\leq g(\Delta \alpha)\leq p_2(\Delta \alpha).
\]
We split polynomials $p_1$, $p_2$ into non-constant and constant parts:
\[
p_i(\Delta \alpha)=p_i^{*}(\Delta \alpha)+c_i, \text{ where } p_i^{*}(0)=0.
\]
If the inequalities
\[
L_c<c_i<R_c, \quad L<p_1(\Delta \alpha)\leq p_2(\Delta \alpha)<R \text{ for } \Delta \alpha \in \left[-\frac{1}{30},\frac{1}{30}\right]
\]
hold, then (assuming that $f$ is increasing, otherwise $p_1^{*},c_1$ and $p_2^{*},c_2$ should be swapped)
\[
q_1(p_1^{*}(\Delta\alpha),c_1)\leq f(g(\Delta\alpha))\leq q_2(p_2^{*}(\Delta\alpha),c_2).
\]
We have bounded $f(g(\Delta\alpha))$ by 36th order polynomials. To obtain bounds by 6th order polynomials, we loose these bounds, using inequalities of the form
\begin{equation}\label{eq-reduceDegree}
-\frac{|a|}{30^{k-6}}x^6\leq ax^k\leq \frac{|a|}{30^{k-6}}x^6, \quad k>6.
\end{equation}

Arithmetic operations are handled similarly: suppose that functions $f$, $g$ are already estimated (all the following inequalities hold for all $\Delta \alpha \in \left[-\frac{1}{30},\frac{1}{30}\right]$):
\[
p_1(\Delta \alpha)\leq f(\Delta \alpha)\leq p_2(\Delta \alpha); \quad q_1(\Delta \alpha)\leq g(\Delta \alpha)\leq q_2(\Delta \alpha),
\]
then
\[
p_1(\Delta \alpha) + q_1(\Delta \alpha) \leq f(\Delta \alpha) + g(\Delta \alpha)\leq p_2(\Delta \alpha) + q_2(\Delta \alpha),
\]
\[
p_1(\Delta \alpha) - q_2(\Delta \alpha) \leq f(\Delta \alpha) - g(\Delta \alpha)\leq p_2(\Delta \alpha) - q_1(\Delta \alpha).
\]
Additionally, if $p_1(\Delta \alpha) \geq 0$, $q_1(\Delta \alpha) \geq 0$, then
\[
p_1(\Delta \alpha) \cdot q_1(\Delta \alpha) \leq f(\Delta \alpha) \cdot g(\Delta \alpha)\leq p_2(\Delta \alpha) \cdot q_2(\Delta \alpha).
\]
The obtained bounds in the latter case are polynomials of degree 12, we replace them by 6th order polynomials using \eqref{eq-reduceDegree}.

When checking the inequalities of the form
\[
L<p_1(\Delta \alpha)\leq p_2(\Delta \alpha)<R \text{ for } \Delta \alpha \in \left[-\frac{1}{30},\frac{1}{30}\right],
\]
we reduce the degrees of $p_1$, $p_2$ from 6 to 2, using inequalities similar to~\eqref{eq-reduceDegree}, and find extremal values of 2nd order polynomials explicitly.

Finally, we obtain polynomial bounds
\[
r_1(\Delta\alpha)\leq|A_1Q_1|-r\leq r_2(\Delta\alpha),
\]
\[
s_1(\Delta\alpha)\leq q + \gamma + \frac{(S - Q_2)\cdot (0, 1)^\mathsf{T}}{|SQ_2|} - \pi\leq s_2(\Delta\alpha).
\]

Since the conditions (i), (ii) should hold, the following conditions should also hold:
\[
r_2(\Delta\alpha)>0, \quad s_2(\Delta\alpha)\geq 0.
\]
Once again, we loose these conditions by replacing $r_2$ and $s_2$ with 2nd order polynomials, and solve the obtained inequalities explicitly.

The first inequality is satisfied only if
\[
\Delta \alpha \in \left[-\frac{1}{30}, -0.004\right];
\] 
the second inequality is satisfied only if
\[
\Delta \alpha \in \left[-0.0008,\frac{1}{30}\right].
\]
This means that conditions (i) and (ii) are not satisfied simultaneously for any
$\alpha\in\left[\alpha_0-\frac{1}{30},\alpha_0+\frac{1}{30}\right]$.

The described calculations are performed in SageMath~\cite{sagemath}; interval arithmetic library~\cite{mpfi} is used to take into account the numerical errors. The source code can be found at~\cite{c0pymaster_minimizer}.

\begin{figure}
\centering
\begin{tikzpicture}[line cap=round,scale=2.7]
    \coordinate (W1) at (2.947, 0.985);
    \coordinate (yW1) at (2.778, 0);
    \coordinate (W2) at (0.996, 2.062);
    \coordinate (yW2) at (0, 1.972);
    \coordinate (V) at (1.07, 1.656);
    \coordinate (Q1) at (1.819, 0.291);
    \coordinate (Q2) at (0.877, 1.492);
    \coordinate (Q) at (0.799, 0.601);

    \draw[very thick] (3.5, 0) -- (0, 0) -- (0, 3);
    \draw[shift={(yW1)}] (-1, 0) arc (180 : 60 : 1);
    \draw[shift={(yW2)}] (0,-1) arc (-90 : 90 : 1);
    \draw (1, 0) arc (0 : 90 : 1);
    
    \draw[dashed] (yW1) -- (2.9977, 1.2805);
    \draw[dashed] (yW2) -- (1.2948, 2.089);
    \draw[dashed] (Q1) -- (yW1);
    
    \draw[dashed] (0,0) -- (Q);
    
    \draw[blue, thick] (3.5, 0.985) -- (W1) -- (V) -- (W2) -- (0.996, 3);
    \draw[blue, thick] (V) -- (Q2) -- (Q) -- (Q1);

    \def\zzz{0.13}
    \draw [thin,shift={(yW1)}] (\zzz, 0) arc(0 : 76 : \zzz);
    \draw [shift={(yW1)}] (\zzz/1.4, \zzz/1.4) node[right]{$\alpha$};
    \draw [thin,shift={(W1)}] (\zzz, 0) arc(0 : 76 : \zzz);
    \draw [thin,shift={(W1)}] (0.242*0.9*\zzz, 0.97*0.9*\zzz) arc(76 : 152 : 0.9*\zzz);
    
    \draw[thin,shift={(yW2)}] (0, \zzz) arc(90 : 5 : \zzz);
    \draw[thin,shift={(yW2)}] (0, 0.8*\zzz) arc(90 : 5 : 0.8*\zzz);
    \draw[thin,shift={(W2)}] (0, \zzz) arc(90 : 5 : \zzz);
    \draw[thin,shift={(W2)}] (0, 0.8*\zzz) arc(90 : 5 : 0.8*\zzz);
    \draw[thin,shift={(W2)}] (0.996 * .9 * \zzz, 0.087 * .9 * \zzz) arc(5 : -80 : .9* \zzz);
    \draw[thin,shift={(W2)}] (0.996 * .7*  \zzz, 0.087 * .7 *\zzz) arc(5 : -80 : .7 *\zzz);

        \def\nodesize{0.02}
    
    \draw [] (0,0) node[below left] {$A_1$};
    \fill [] (yW1) circle (\nodesize) node[below] {$y(W_{1})$};
    \fill [] (yW2) circle (\nodesize) node[left] {$y(W_{2})$};
    \fill [blue] (W1) circle (\nodesize) node[below left, black] {$W_1$};
    \fill [blue] (W2) circle (\nodesize) node[above left, black] {$W_2$};
    \fill [blue] (V) circle (\nodesize) node[above right, black] {$V$};
    \fill [blue] (Q2) circle (\nodesize) node[left, black] {$Q_2$};
    \fill [blue] (Q1) circle (\nodesize) 
        node[label={[black, shift={(0.4, -0.2)}] $Q_1$}] {};
    \fill [blue] (Q) circle (\nodesize) node[above right, black] {$Q$};
\end{tikzpicture}
\caption{Case 5}
\label{Fig:case5}
\end{figure}

\paragraph{Case 5. $\St(p)$ consists of three line segments $[VQ_2]$, $[Q_2Q]$ and $[QQ_1]$ (up to $Q_1-Q_2$ symmetry).}

By Proposition~\ref{diffproposition} 
\[
2\cos^2 \alpha = \cos \angle Qy(W_1)A_1,
\]
so 
\[
\angle Qy(W_1)A_1 = \arccos (2\cos^2 \alpha) > \frac{\pi}{4}
\]
for $p \in P_0$ (here we need a weaker bound than in Case 3b).

From the other hand $\angle A_1Qy(W_1) \geq \frac{\pi}{2}$, so $\angle QA_1y(W_1) \leq \frac{\pi}{4}$. 
Summing up we have that $\angle Qy(W_1)A_1 > \angle QA_1y(W_1)$, so $|Qy(W_1)| < |QA_1| = r$ which is a contradiction.

\section{Conclusion}
\label{discussion}

Now we briefly enlist related open problems. The most interesting is to find the set of minimizers for a circumference. Miranda, Paolini and Stepanov conjectured that an arbitrary minimizer for a circumference of radius $R > r$ is a horseshoe. It is also interesting to find a set of minimizers for long enough stadium.

It make sense to determine how strong Theorem~\ref{horseshoeT} can be. The condition $R > 5r$ seems excessive;  for $R < 1.75 r$ the statement is false.
For details see~\cite{cherkashin2020minimizers}. 

Here we focus on the uniqueness of a minimizer (up to symmetries) for a rectangle. Recall that the topology of the Case 3a leads to a minimizer (see Fig.~\ref{fig:TheOptimalCase}). Put $q = \frac{\angle Q_1Q_2V}{2}$, $\gamma = A_1y(W_2)Q_2$, $t = Q_1A_1y(W_1)$. Denote $Q_1Q_2$ by $a$ and $VW_1$ by $b$. Then one can obtain the following system
\[
\begin{cases}
\alpha + \beta = \frac{11\pi}{12}\\
\cos(q-\gamma) = 1 - \sin 2\alpha\\
q + \gamma = 2\alpha - \pi/6\\
4\cos^2 \alpha = \cot t \cdot \cos (q-\gamma) - \sin (q - \gamma)\\
2r\cos \frac{\alpha + t}{2} = a \cos \left (\frac{\pi}{3} + \frac{3\alpha + t}{2} - 2q \right ) + \frac{4}{\sqrt 3}r\cos \frac{\beta + \gamma}{2} \sin \frac{3\beta + \gamma}{2}\cos\left(\frac{3\alpha + t}{2} - \frac{2\pi}{3} \right ) - b\cos \frac{3\alpha+t}{2}\\
b = - 2r\cos \frac{\alpha + t}{2}\cos \frac{3\alpha + t}{2} + a\cos \left( 2q - \frac{\pi}{3}\right ) - \frac{2}{\sqrt 3}r\cos \frac{\beta + \gamma}{2}\sin \frac{3\beta+\gamma}{2}\\
x = 2r\cos t\\
y = r\cos \gamma + r\sin t + a\cos(q - \gamma).\\
\end{cases}
\]
The second and the forth equations are applications of Proposition~\ref{diffproposition} to points $y(W_2)$ and $y(W_1)$ respectively (we do not enlist case, corresponding to right-hand side of the fourth equation in Section~\ref{diff}, but it can be found in~\cite{cherkashin2020minimizers}). The fifth and the sixth equations are projection of quadrangle $Q_1Q_2VW_1$ on sides $Q_1W_1$ and $VW_1$.

The length equals to
\[
\phi = const - x - y - r\cos \beta - r\cos \alpha + a + b + \frac{4}{\sqrt{3}}r\cos\frac{\beta + \gamma}{2}\sin\frac{3\beta + \gamma}{2} + \frac{4}{\sqrt{3}} r\cos\frac{\beta+\gamma}{2} \sin\left ( \frac{3\beta+\gamma}{2} - \frac{2\pi}{3} \right).
\]

Computer simulation shows that the length has unique minimum for $p \in P_0$ with respect to the system. 
But a computer-assistant proof of this fact is too ugly to write it here.

\paragraph{Acknowledgments.} We thank Misha Basok for helping us to convince Fedor Petrov not to demand the proof of uniqueness of $\Sigma$.

The results of Sections 4.2 and 7 of Danila Cherkashin and Yana Teplitskaya are supported by <<Native towns>>, a social investment program of PJSC <<Gazprom Neft>>.
The research of Danila Cherkashin in Section 6 was supported by the Grant of the Government of the Russian Federation for the state support of scientific research supervised by leading scientists (agreement № 075-15-2019-1926).
The work of Yana Teplitskaya on Section 2 is supported by Russian Foundation for Basic Research, grant 17-01-00678.
The work of Alexey Gordeev on Section 5, as well as the implementation of computer programs~\cite{c0pymaster_minimizer}, are supported by Ministry of Science and Higher Education of the Russian Federation, agreement № 075–15–2019–1620. The work of Georgiy Strukov on Section 4.1 is supported by Ministry of Science and Higher Education of the Russian Federation, agreement № 075–15–2019–1619.

\bibliographystyle{plain}
\bibliography{main}

\end{document}